\documentclass[12pt]{amsart}

\usepackage{graphicx}
\usepackage{amsfonts}
\usepackage{epsf}
\usepackage{amssymb}
\usepackage{amsmath}
\usepackage{amscd}
\usepackage{hyperref}
\usepackage{color}
\usepackage{bbm}

\newtheorem{theorem}{Theorem}[section]
\newtheorem{proposition}[theorem]{Proposition}
\newtheorem{lemma}[theorem]{Lemma}
\newtheorem{question}[theorem]{Question}
\newtheorem{corollary}[theorem]{Corollary} 
\newtheorem{conjecture}[theorem]{Conjecture}

\theoremstyle{remark}
\newtheorem{definition}[theorem]{Definition}
\newtheorem{example}[theorem]{Example}
\newtheorem{remark}{Remark}[section]

\newcommand{\eps}{\epsilon}

\hoffset=-13mm
\voffset=-10mm
\textwidth 155mm
\textheight 22cm

\input xy
\xyoption{all}

\begin{document}

\title{The concordance crosscap number and rational Witt span of a knot}
\begin{abstract}
The concordance crosscap number $\gamma_c(K)$ of a knot $K$ is the smallest crosscap number $\gamma_3(K')$ of any knot $K'$ concordant to $K$ (and with $\gamma_3(K')$ defined as the least first Betti number of any nonorientable surface $\Sigma$ embedded in $S^3$ with boundary $K'$). This invariant has been introduced and studied by Zhang \cite{Zhang} using knot determinants and signatures, and has further been studied by Livingston \cite{Livingston} using the Alexander polynomial. We show in this work that the rational Witt class of a knot can be used to obtain a lower bound on the concordance crosscap number, by means of a new integer-valued invariant we call the {\em Witt span} of the knot.  
\end{abstract}
%
%\subjclass[2010]{57M25 and 57M27} 
%
\thanks{The author was partially supported by the Simons Foundation, Award ID 524394, and by the NSF, Grant No. DMS--1906413. }
\author{Stanislav Jabuka}
\email{jabuka@unr.edu}
\address{Department of Mathematics and Statistics, University of Nevada, Reno NV 89557, USA.}
\maketitle
%%%%%
%%%%%
%%%%%
%%%%%
%%%%%
%%%%%
\section{Introduction}
%%%
%%%
\subsection{Background} \label{Introduction}
The study of knots by means of the surfaces they bound goes back to at least 1930 when Frankl and Pontrjagin \cite{FranklPontrjagin} proved the existence of a Seifert surface for any knot, something that was expounded on just a few years later by Seifert in his influential work \cite{Seifert}. The relationship between knots and surfaces naturally leads to the definition of several flavors of knot genera. For this work three such invariants will be relevant. The {\em crosscap number} or {\em nonorientable 3-genus} $\gamma_3(K)$ of a knot $K$ is the smallest first Betti number of any nonorientable surface $\Sigma$ embedded in $S^3$ with $\partial \Sigma =K$. It was defined by Clark \cite{Clark} in 1976, a paper in which he proved this useful theorem:
\begin{theorem}[Clark \cite{Clark}] \label{Clark's Theorem}
A knot $K$ has crosscap number equal to 1 if and only if $K$ is the $(2,n)$-cable of some knot. 
\end{theorem}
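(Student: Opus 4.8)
The plan is to reformulate ``crosscap number one'' in terms of Möbius bands and then read off the cabling structure directly. First I would record the elementary surface-classification fact that a compact nonorientable surface with a single boundary circle has first Betti number one precisely when it is homeomorphic to the Möbius band: a closed nonorientable surface of genus $k$ has Euler characteristic $2-k$, so removing $b$ disks gives $\chi = 2-k-b$ and hence first Betti number $k+b-1$; with $b=1$ this equals $k$, so $b_1=1$ forces $k=1$. Since no nonorientable surface has vanishing first Betti number, $\gamma_3(K)\geq 1$ for every knot, and $\gamma_3(K)=1$ if and only if $K$ bounds an embedded Möbius band in $S^3$. The theorem then becomes: $K$ bounds an embedded Möbius band if and only if $K$ is a $(2,n)$-cable.

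For the ``if'' direction, suppose $K$ is the $(2,n)$-cable of a knot $J$. Since $K$ is connected, $n$ must be odd, and $K$ lies on the boundary torus $\partial N(J)$ of a tubular neighborhood as the curve $2\lambda+n\mu$, where $\lambda$ is the Seifert-framed longitude and $\mu$ the meridian. Identifying $N(J)$ with $D^2\times S^1$ (with the chosen trivialization encoding $n$), I would exhibit the standard ``half-twisted band'' in the solid torus: the embedded Möbius band $M\subset N(J)$ whose core is $\{0\}\times S^1=J$ and whose boundary wraps twice in the $S^1$ direction while making a single half-twist. One then checks that $\partial M$ is isotopic on $\partial N(J)$ to $2\lambda+n\mu$, which gives $\gamma_3(K)\leq 1$ and hence equality.

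For the ``only if'' direction, suppose $K=\partial M$ with $M\subset S^3$ an embedded Möbius band, and let $C$ be the core circle of $M$, a knot in $S^3$. I would choose a tubular neighborhood $N(C)$ small enough that $M\cap N(C)$ is a sub-Möbius band $M_0$ with core $C$, so that $A:=\overline{M\setminus M_0}$ is an annulus properly embedded in the exterior $S^3\setminus\mathrm{int}\,N(C)$ with $\partial A=K\sqcup K'$, where $K'=\partial M_0\subset\partial N(C)$. Because $M_0$ deformation retracts onto $C$, the class $[K']$ in $H_1(N(C))\cong\mathbb{Z}\langle[C]\rangle$ equals $2[C]$, so $K'$ is a curve of the form $2\lambda+m\mu$ on $\partial N(C)$, with $m$ odd since $K'$ is connected. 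The annulus $A$ supplies an isotopy in $S^3$ carrying $K$ onto $K'$, so $K$ is the $(2,m)$-cable of $C$, completing the argument.

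The routine-looking but genuinely delicate point is the bookkeeping around framings and parity: one must fix the convention for ``$(2,n)$-cable'' (relative to the Seifert framing), verify that the boundary slope of a Möbius band around a prescribed core is always $2\lambda+(\text{odd})\mu$, and match this against the explicit band built in the ``if'' direction. I expect this parity/framing matching — rather than any conceptual difficulty — to be where the care is needed; the underlying topology is just the decomposition $M=M_0\cup A$ together with the standardness of a tubular neighborhood of a knot.
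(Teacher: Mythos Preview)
The paper does not prove this statement: Theorem~\ref{Clark's Theorem} is quoted from Clark's 1976 paper \cite{Clark} as background and motivation (it underlies Zhang's and Livingston's obstructions mentioned in the introduction), and no argument for it is given anywhere in the present paper. So there is no ``paper's own proof'' to compare your proposal against.

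For what it is worth, your outline is the standard argument and is essentially correct. The reduction of $\gamma_3(K)=1$ to ``$K$ bounds an embedded M\"obius band'' is immediate from the classification of compact nonorientable surfaces with one boundary component, and both directions then proceed exactly as you describe: for the ``if'' direction the twisted $I$-bundle over the companion $J$ inside $N(J)$ is the desired M\"obius band, and for the ``only if'' direction the core $C$ of the M\"obius band $M$ serves as the companion, with the decomposition $M=M_0\cup A$ and the embedded annulus $A$ furnishing an isotopy from $K$ to the $(2,m)$-curve $K'=\partial M_0$ on $\partial N(C)$. Your identification of the framing/parity bookkeeping (matching the Seifert-framing convention for ``$(2,n)$-cable'' with the slope of $\partial M_0$) as the only place requiring care is accurate; nothing conceptual is missing.
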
  
A 4-dimensional analogue of the crosscap number was defined in 2000 by Murakami and Yasuhara \cite{MurakamiYasuhara}. Called the {\em 4-dimensional crosscap number} or {\em the nonorientable 4-genus} and denoted by $\gamma_4(K)$, it is the smallest first Betti number of any nonorientable surface $\Sigma$, smoothly and properly embedded into the 4-ball $D^4$ and with $\partial \Sigma =K$.

The {\em concordance crosscap number} or {\em the nonorientable concordance 3-genus} of $K$, denoted $\gamma_c(K)$, is defined as 
$$\gamma_c(K) = \min \{\gamma_3(K')\, |\, K' \text{ is smoothly concordant to } K\}.$$
It can be seen as an intermediary between the 3- and 4-dimensional definitions of $\gamma_3$ and $\gamma_4$ respectively, and its value clearly lies between the two:
\begin{equation} \label{Double Inequality}
\gamma_4(K) \le \gamma_c(K) \le \gamma_3(K). 
\end{equation}
Batson's landmark paper \cite{Batson} showed that the nonorientable 4-genus $\gamma_4$ can grow arbitrarily large, and by implication so can $\gamma_c$. That said, $\gamma_4$ remains a notoriously difficult to compute, and Batson's lower bound for $\gamma_4(K)$ (as well as a second lower bound for $\gamma_4(K)$ by Ozsv\'ath, Stipsicz and Szab\'o \cite{OSS}) vanishes for alternating knots. For these reasons, it is of independent interest to develop lower bounds on $\gamma_c(K)$, ones that are computationally accessible and whose applicability is not limited to particular families of knots. This is the goal of this paper.

The concordance crosscap number was first defined by Zhang \cite{Zhang} in 2007. In her work she identifies an infinite family of 3-stranded pretzel knots for which $\gamma_4$ equals 1 and for which $\gamma_c$ is greater than 1. Her obstruction to $\gamma_c(K)=1$ relies on Clark's Theorem \ref{Clark's Theorem}, and a clever use of the  signature and determinant of the knot. 

The only other work published to date on the concordance crosscap number is by Livingston \cite{Livingston}. He also uses Clark's Theorem \ref{Clark's Theorem} along with the Alexander polynomial to prove this result:
\begin{theorem}[Livingston \cite{Livingston}] \label{Livingston's Theorem}
Suppose $\gamma_c(K)=1$ and set $q = |\sigma(K)|+1$. For all odd prime power divisors $p$ of $q$, the $2p$–cyclotomic polynomial $\phi_{2p}(t)$ has odd exponent in the Alexander polynomial $\Delta_K(t)$. Furthermore, every other symmetric irreducible polynomial $\delta(t)$ with odd exponent
in $\Delta_K(t)$ satisfies $\delta(-1) = \pm 1$. 
\end{theorem}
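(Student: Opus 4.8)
The plan is to use Clark's Theorem~\ref{Clark's Theorem} to replace $K$ by a cabled concordant knot, run everything through the standard cabling formulas for the Alexander polynomial and the signature, and so reduce the statement to a single algebraic fact about how $\Delta_J(t^2)$ factors. Concretely: from $\gamma_c(K)=1$ I would choose a knot $K'$ smoothly concordant to $K$ with $\gamma_3(K')=1$, so that by Theorem~\ref{Clark's Theorem} $K'$ is the $(2,n)$-cable of some knot $J$, with $n$ odd since $K'$ is a knot. Three standard inputs then assemble the picture. (1) The cabling formula $\Delta_{K'}(t)\doteq\Delta_J(t^2)\,\Delta_{T(2,n)}(t)$, together with the factorization $\Delta_{T(2,n)}(t)\doteq(t^{|n|}+1)/(t+1)=\prod_{d \mid |n|,\, d>1}\phi_{2d}(t)$ (using that $n$ is odd) into pairwise distinct, symmetric, irreducible polynomials each occurring to the first power. (2) Litherland's additivity of the signature function, which gives $\sigma(K')=\sigma(T(2,n))$, so that $q=|\sigma(K)|+1=|\sigma(K')|+1=|n|$ is odd. (3) Since $K$ and $K'$ are concordant, the Fox--Milnor theorem gives $\Delta_K(t)\,\Delta_{K'}(t)\doteq f(t)f(t^{-1})$ for some $f\in\mathbb{Z}[t,t^{-1}]$, in which every symmetric irreducible polynomial occurs with even exponent; hence for every symmetric irreducible $\delta$ the exponent of $\delta$ in $\Delta_K(t)$ has the same parity as its exponent in $\Delta_J(t^2)\,\Delta_{T(2,n)}(t)$. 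I will also use the elementary identities $\phi_{2d}(-1)=\phi_d(1)$ for odd $d>1$, and $\phi_d(1)=\ell$ when $d$ is a power of a prime $\ell$, while $\phi_d(1)=1$ when $d>1$ is not a prime power.

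The heart of the proof is the claim that \emph{every irreducible factor $\delta\in\mathbb{Z}[t]$ of $\Delta_J(t^2)$ satisfies $\delta(-1)=\pm1$.} To prove it, let $\theta$ be a root of $\delta$; since $\Delta_J(0)=\pm1$ we have $\theta\neq0$, and $\theta^2$ is a root of $\Delta_J$, whose minimal polynomial $k$ is an irreducible factor of $\Delta_J$. Because $\Delta_J$ is a primitive polynomial with $\Delta_J(1)=\pm1$, each of its irreducible factors is $\pm1$ at $t=1$, so $k(1)=\pm1$. Now $\mathbb{Q}(\theta)/\mathbb{Q}(\theta^2)$ has degree $1$ or $2$. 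If the degree is $2$, comparing Galois conjugates shows $\delta(t)\doteq k(t^2)$, whence $\delta(-1)\doteq k(1)=\pm1$. If the degree is $1$, then squaring is a bijection on the conjugates of $\theta$, so $-\theta$ is not a conjugate of $\theta$ and $\delta(t)$, $\delta(-t)$ are coprime; the product $\delta(t)\delta(-t)$ is invariant under $t\mapsto-t$ and therefore a polynomial in $t^2$ with the same roots, and the same multiplicities, as $k(t^2)$, so $\delta(t)\delta(-t)\doteq k(t^2)$. Evaluating at $t=1$ gives $\delta(1)\delta(-1)=\pm k(1)=\pm1$, and since both factors are integers, $\delta(-1)=\pm1$ once more.

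Granting the claim, the theorem follows by bookkeeping. For an odd prime power $p\mid q$ one has $\phi_{2p}(-1)=\phi_p(1)=\ell\neq\pm1$, so the claim forbids $\phi_{2p}$ from dividing $\Delta_J(t^2)$; hence the exponent of $\phi_{2p}$ in $\Delta_J(t^2)\,\Delta_{T(2,n)}(t)$ equals its exponent in $\Delta_{T(2,n)}(t)$, which is $1$ since $p\mid q$ and $p>1$, and therefore $\phi_{2p}$ has odd exponent in $\Delta_K(t)$. Conversely, suppose a symmetric irreducible $\delta$ has odd exponent in $\Delta_K(t)$ --- equivalently, odd exponent in $\Delta_J(t^2)\,\Delta_{T(2,n)}(t)$ --- and $\delta(-1)\neq\pm1$. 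The claim removes $\delta$ from $\Delta_J(t^2)$, so $\delta$ must divide $\Delta_{T(2,n)}(t)$, forcing $\delta\doteq\phi_{2d}$ for some $d\mid q$ with $d>1$; then $\phi_d(1)=\phi_{2d}(-1)=\delta(-1)\neq\pm1$ forces $d$ to be a prime power, necessarily odd because $d\mid q$ and $q$ is odd. Thus every symmetric irreducible polynomial of odd exponent in $\Delta_K(t)$ other than the $\phi_{2p}$ with $p$ an odd prime power dividing $q$ satisfies $\delta(-1)=\pm1$, which is the content of the ``furthermore'' clause.

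I expect the algebraic claim to be the one genuine obstacle: it calls for pinning down the degree dichotomy of $\mathbb{Q}(\theta)/\mathbb{Q}(\theta^2)$, the multiplicity bookkeeping that identifies $\delta(t)\delta(-t)$ with $k(t^2)$ in the degree-one case, and careful use of Gauss's lemma to keep all the relevant polynomials primitive so that the evaluation arguments at $t=1$ are legitimate. The remaining steps are routine applications of the cabling formulas for $\Delta$ and $\sigma$, the Fox--Milnor condition, and elementary properties of cyclotomic polynomials.
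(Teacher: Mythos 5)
A preliminary point of comparison: the paper does not prove this statement at all --- it is quoted as background from Livingston's article --- so there is no internal proof to measure your argument against. Judged on its own merits, your proposal is correct and is essentially a reconstruction of Livingston's original argument: Clark's characterization of crosscap number one knots, the satellite formulas $\Delta_{K'}(t)\doteq\Delta_J(t^2)\,\Delta_{T(2,n)}(t)$ and $\sigma(K')=\sigma(T(2,n))$ (Litherland's formula evaluated at $\omega=-1$, which is safe since $\Delta_{T(2,n)}(-1)=\pm n\neq 0$ and $\Delta_J(1)=\pm 1$), concordance invariance of the signature giving $q=|\sigma(K)|+1=|n|$, and Fox--Milnor giving the mod-$2$ equality of the exponents of symmetric irreducible factors in $\Delta_K$ and $\Delta_{K'}$. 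The cyclotomic bookkeeping at the end, using $\phi_{2d}(-1)=\phi_d(1)$ and the prime-power criterion for $\phi_d(1)\neq 1$, is also right.

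One remark: the step you single out as ``the one genuine obstacle'' --- that every irreducible factor $\delta$ of $\Delta_J(t^2)$ satisfies $\delta(-1)=\pm 1$ --- needs none of the field-theoretic analysis of $\mathbb{Q}(\theta)/\mathbb{Q}(\theta^2)$. Since $\Delta_J(1)=\pm 1$, the polynomial $\Delta_J$, and hence $\Delta_J(t^2)$, is primitive, so by Gauss's lemma one may write $\Delta_J(t^2)=\delta(t)\,g(t)$ with $g\in\mathbb{Z}[t]$; evaluating at $t=-1$ gives $\delta(-1)\,g(-1)=\Delta_J(1)=\pm 1$, and an integer dividing $\pm 1$ equals $\pm 1$. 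Your longer argument (the degree dichotomy for $\mathbb{Q}(\theta)/\mathbb{Q}(\theta^2)$ and the identifications $\delta(t)\doteq k(t^2)$, respectively $\delta(t)\delta(-t)\doteq k(t^2)$) does check out, but it is unnecessary; the rest of your outline stands as written.
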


\subsection{Results} The goal of this work is to introduce and study a new concordance invariant of knots, called the {\em rational Witt span}, or {\em Witt span} for short, and use it to bound from below the concordance crosscap number $\gamma_c(K)$. To define this new invariant, recall that the Witt ring of the rational numbers, denoted $W(\mathbb Q)$, is a commutative ring that fits into this split exact sequence of Abelian groups: 
\begin{equation} \label{SESForW(Q)} 
0 \to \mathbb Z \stackrel{\iota}{\longrightarrow} W(\mathbb Q) \stackrel{\partial}{\longrightarrow} \bigoplus_{p \text{ prime}} W(\mathbb F_p) \to 0.
\end{equation}
Here $\mathbb F_p$ is the field of $p$ elements, and $W(\mathbb F_p)$ is its Witt ring. The map $\partial :W(\mathbb Q)\to \oplus _{p \text{ prime}} W(\mathbb F_p)$ is the direct sum $\partial = \oplus _p \partial _p$ of group homomorphisms $\partial _p:W(\mathbb Q) \to W(\mathbb F_p)$. More generally, for any field $\mathbb F$ the elements of its Witt ring $W(\mathbb F)$ are equivalence classes of finite rank, non-degenerate, symmetric, bilinear forms over $\mathbb F$. This allows one to define the Witt class $W(K)$ of a knot $K$ to be the equivalence class in $W(\mathbb Q)$ of the bilinear form on $H_1(\Sigma ; \mathbb Q)$ coming from the symmetric linking form $V+V^\tau$ associated to any Seifert surface $\Sigma$ of $K$. Full details are given in Section \ref{Arithmetic}. 

\begin{definition} \label{WittSpan}
For a knot $K$ we define its {\em (rational) Witt span}, denoted $ws(K)$, as 
$$ws(K) = \min \{ \text{rank } B \, |\,  \partial [B] = \partial (W(K))\}.$$
In the above, $B$ ranges over all finite rank, non-degenerate, symmetric, bilinear forms over $\mathbb Q$, and $[B]$ denotes its equivalence class in $W(\mathbb Q)$. Said differently, the Witt span of a knot is the smallest rank of any form $B$ such that its equivalence class $[B]\in W(\mathbb Q)$ has the same image in $\oplus_{p \text{ prime}} W(\mathbb F_p)$ as $W(K)$ under the epimorphism $\partial$ from the sequence \eqref{SESForW(Q)}. 
\end{definition} 
Since the Witt class $W(K)$ is a concordance invariant of $K$, then so is $ws(K)$. With this understood, here is our main result. 
%%%
%%%
\begin{theorem} \label{main}
For any knot $K$, the Witt span $ws(K)$ is a lower bound for its concordance crosscap number, that is 
$$ ws(K) \le \gamma_c(K).$$
The range of the Witt span function on the set of all knots equals $\{0, 1,2,3\}$.
\end{theorem}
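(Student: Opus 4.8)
The statement splits into the inequality $ws(K)\le\gamma_c(K)$ and the computation of the range, and I would treat the two separately. For the inequality: since $ws$ is a concordance invariant, it suffices to prove $ws(K)\le\gamma_3(K)$ for every knot $K$. Given a spanning surface $F\subset S^3$ for $K$ — orientable or not — with $b_1(F)=\gamma_3(K)$, I would use the Gordon--Litherland form $\mathcal G_F$, a symmetric bilinear form over $\mathbb Q$ of rank at most $b_1(F)$. The point I must establish is that $\partial[\mathcal G_F]=\partial(W(K))$ in $\bigoplus_p W(\mathbb F_p)$; granting this, $ws(K)\le\operatorname{rank}\mathcal G_F\le b_1(F)=\gamma_3(K)$, and together with \eqref{Double Inequality} and the concordance invariance of $ws$ this gives $ws(K)\le\gamma_c(K)$. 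To prove the key point I would argue that both $\mathcal G_F$ and the symmetrized Seifert form $V+V^\tau$ present the linking form of the double branched cover $\Sigma_2(K)$ — classical for orientable $F$, and for nonorientable $F$ coming from the identification of $\mathcal G_F$ with the intersection form of the double branched cover of $D^4$ along the pushed--in surface — and then invoke the fact (from Section~\ref{Arithmetic}) that $\partial_p$ of an integral form depends only on the linking form of its cokernel. Equivalently, $\Sigma_2(D^4,\widehat F)$ and $\Sigma_2(D^4,\widehat\Sigma)$ have the same boundary $\Sigma_2(K)$, so Novikov additivity together with the vanishing of $\partial$ on the unimodular intersection form of any closed $4$--manifold forces their intersection forms to have equal $\partial$--image.

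For the range, the observation that makes things tractable is that, because $\ker\partial=\iota(\mathbb Z)=\mathbb Z\langle 1\rangle$ by \eqref{SESForW(Q)}, the forms admissible in Definition~\ref{WittSpan} are exactly those whose Witt class lies in the coset $\{\,W(K)+n\langle 1\rangle:n\in\mathbb Z\,\}$, so $ws(K)$ equals the minimum over $n$ of the anisotropic dimension of $W(K)+n\langle 1\rangle$. The upper bound is then easy: choose $n$ so that $W(K)+n\langle 1\rangle$ has signature $1$; the anisotropic representative of that class has dimension congruent to $1$ modulo $2$, and by Meyer's theorem (an indefinite rational form of dimension $\ge 5$ is isotropic) its dimension is at most $4$, hence at most $3$. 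So $ws(K)\in\{0,1,2,3\}$ for every $K$, with $0$ realized by the unknot. To see that $1$ and $2$ occur I would take the trefoil $3_1$ (which bounds a M\"obius band, so $ws(3_1)\le\gamma_c(3_1)\le\gamma_3(3_1)=1$, while a short computation gives $\partial_3 W(3_1)\ne 0$, hence $ws(3_1)=1$) and the connected sum $3_1\# 3_1$ (where $\partial W(3_1\# 3_1)$ is supported at $p=3$ with $\partial_3 W(3_1\# 3_1)=2\langle 1\rangle$, the order--$2$ element of $W(\mathbb F_3)\cong\mathbb Z/4$; this is realized by the rank--$2$ form $\langle 3,3\rangle$ but by no rank--$1$ form, since at any odd prime a rank--$1$ rational form contributes only $0$ or $\pm\langle 1\rangle$, so $ws(3_1\# 3_1)=2$).

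The hard part will be producing a knot with $ws=3$, and I expect the obstacle to be showing that a given $\partial(W(K))$ is realized by no rational form of rank $\le 2$ (locating such a ``maximally complicated'' class is itself part of the puzzle). My candidate is $K_0=4_1\# 5_1$: a Seifert--matrix computation should show that $\partial W(K_0)$ is supported at $p=5$ with $\partial_5 W(K_0)=\langle 1,n\rangle$, the unique anisotropic rank--$2$ Witt class in $W(\mathbb F_5)\cong(\mathbb Z/2)^2$ (concretely $\partial_5 W(4_1)=\langle n\rangle$ and $\partial_5 W(5_1)=\langle 1\rangle$, with $n$ a non-square mod $5$). To rule out rank $\le 2$: if $\langle d,e\rangle$ had $\partial[\langle d,e\rangle]=\partial W(K_0)$, then $\partial_\ell[\langle d,e\rangle]=0$ for all $\ell\ne 5$ forces $v_\ell(d)\equiv v_\ell(e)\pmod 2$ for $\ell\ne 5$, while $\partial_5[\langle d,e\rangle]$ having anisotropic rank $2$ forces $v_5(d)$ and $v_5(e)$ both odd; hence $v_\ell(de)$ is even for every prime $\ell$, so $de$ is $\pm$ a square, and therefore the discriminant of $\partial_5[\langle d,e\rangle]$ over $\mathbb F_5$ equals $\overline{\pm 1}$, which is a square since $5\equiv 1\pmod 4$; but a rank--$2$ form over $\mathbb F_5$ with square discriminant is hyperbolic, contradicting $\partial_5[\langle d,e\rangle]=\langle 1,n\rangle\ne 0$. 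So $ws(K_0)\ge 3$, and with the universal bound $ws(K_0)=3$. Apart from this step, everything reduces to routine bookkeeping with Seifert matrices together with the standard arithmetic of $W(\mathbb Q)$, $W(\mathbb F_p)$, the second residue homomorphisms $\partial_p$, and Meyer's theorem.
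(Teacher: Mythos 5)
Your proposal is correct, but it reaches both halves of the theorem by routes genuinely different from the paper's. For the inequality, the paper (Proposition \ref{PropositionLowerBoundOnGammaC}) also passes through the Gordon--Litherland form, but it proves the key identity $\partial[\mathcal G_F]=\partial(W(K))$ by invoking Gordon--Litherland's $S^\ast$-equivalence theorem (any two spanning surfaces differ by isotopy, $1$-handles, and half-twisted bands) and checking that each move changes the Witt class only by elements of $\ker\partial$; you instead use the other main Gordon--Litherland theorem, identifying $\mathcal G_F$ with the intersection form of the double cover of $D^4$ branched along the pushed-in surface, and then a Mayer--Vietoris/unimodularity argument for the closed manifold obtained by gluing $\Sigma_2(D^4,\widehat F)$ to $-\Sigma_2(D^4,\widehat\Sigma)$ along the rational homology sphere $\Sigma_2(K)$. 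That argument is sound (note $\det\mathcal G_F=\pm\det K\neq0$, so the class lives in $W(\mathbb Q)$), though your parenthetical appeal to ``$\partial_p$ depends only on the linking form of the cokernel, from Section \ref{Arithmetic}'' is a misattribution --- that fact is true but nowhere in Section \ref{Arithmetic}; fortunately your gluing argument makes it unnecessary. For the range, your Meyer-theorem argument (shift the signature to $\pm1$ inside the coset $W(K)+\mathbb Z[1]=\partial^{-1}(\partial W(K))$, so the anisotropic representative has odd rank $\le 4$) is a cleaner version of the paper's Lemma \ref{LemmaBoundOnWSFromAboveByThree}, and your realizations differ entirely from the paper's: the paper develops the general algorithm of Theorem \ref{TheoremAlgorithmForComputingws} and applies it in Example \ref{ExampleGivingFullRangeOfWittSpan} to $9_{40}$, $7_5$ and $9_{49}$, whereas you use $3_1$, $3_1\#3_1$ and $4_1\#5_1$ with direct arithmetic. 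Your computations check out ($\partial_3W(3_1)=[1]\neq0$; $\partial_3W(3_1\#3_1)=2[1]$, unreachable by rank-one forms; $\partial_5W(4_1)=[\beta]$, $\partial_5W(5_1)=[1]$, all other residues zero), and your rank-$\le2$ obstruction for $4_1\#5_1$ --- parity of valuations forces $de=\pm\square$, so the residue form at $5$ has square discriminant and is hyperbolic, contradicting $[1]\oplus[\beta]\neq0$ --- is a nice elementary special case of what the paper obtains from Proposition \ref{PropsitionPreimagesOfDel} and Theorem \ref{TheoremAlgorithmForComputingws} (packaged as Theorem \ref{TheoremCharacterizingWsTwoThree}(ii)). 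The trade-off: your route proves Theorem \ref{main} more quickly and almost self-containedly, while the paper's heavier machinery is what powers its later computations and the characterizations of $ws=1,2,3$.
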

To understand the perhaps surprising bound  $ws(K)\le 3$ from the preceding theorem, we digress a bit first. For a field $\mathbb F$, a bilinear form $B:V\times V \to \mathbb F$ defined on a finite dimensional $\mathbb F$-vector space $V$ is called {\em isotropic} if there exists a nonzero vector $x\in V$ with $B(x,x)\ne 0$. If no such vector exists, we say that $B$ is {\em anisotropic}. The {\em universal invariant} $u(\mathbb F) \in \mathbb N$ of the field $\mathbb F$ is defined as the unique integer $n$ such that all non-degenerate symmetric bilinear forms of rank $n+1$ over $\mathbb F$ are isotropic, but such that there is at least one non-degenerate symmetric bilinear anisotropic form of rank $n$ over $\mathbb F$ (if no such $n$ exists, set $u(\mathbb F) = \infty$). As will become clear from the proof of Theorem \ref{main}, the bound $ws(K)\le 3$ is a direct consequence of the fact that $u(\mathbb Q_p)=4$ for every prime $p$, where $\mathbb Q_p$ is the field of the $p$-adic numbers. Sections \ref{Arithmetic} and \ref{SectionOnProofs} provide the relevant details. 
\vskip2mm

Of the three non-trivial values that $ws(K)$ can attain, one can completely characterize knots $K$ with $ws(K)=1$. One can additionally provide sufficient conditions under which $ws(K)$ equals 2 or 3. As we shall see in Theorem \ref{TheoremAlgorithmForComputingws}, calculating $ws(K)$ is completely algorithmic but can be laborious. The results from the next two theorems can be seen as providing significant computational shortcuts for some knots. 

To state the first theorem, let $\mathbb F$ be any field, and let $\langle a \rangle$ denote the 1-dimensional bilinear form over $\mathbb F$ with $\langle a \rangle (x,y) = axy$. The equivalence class of $\langle a \rangle$ in $W(\mathbb F)$ will be denoted by $[a]$. If $(V_1, B_1)$ and $(V_2, B_2)$ are two bilinear forms over $\mathbb F$, let $B=B_1\oplus B_2$ denote the bilinear form $B:(V_1\oplus V_2) \times (V_1\oplus V_2)\to \mathbb F$, defined by $B((v_1, v_2), (w_1, w_2)) = B_1(v_1,w_1) + B_2(v_2,w_2)$. We use $n\cdot \langle a \rangle$ to denote the $|n|$-fold sum of $\langle a\rangle$ with itself if $n\ge 0$, or $|n|$-fold sum of $\langle -a \rangle$ with itself if $n\le  0$ (the case of $n=0$ yields the zero form). The expression $n\cdot [a]$ is similarly defined. 
%%%
%%%
\begin{theorem} \label{Consequence1}
A knot $K$ has Witt span $ws(K)$ no greater than 1 if and only if 
$$W(K) = [d] \oplus (n\cdot [1])$$ 
with $d$ equal to either $\det K$ or $-\det K$,  and with $n=\sigma (K) - \text{Sign}(d)$. If $\det K$ is a square then $ws(K)=0$, otherwise $ws(K)=1$.  
\end{theorem}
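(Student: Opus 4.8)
The plan is to deduce the theorem from three standard tools for the rational Witt group, with the relevant background as in Section~\ref{Arithmetic}: the epimorphism $\partial$ of \eqref{SESForW(Q)}, whose kernel is exactly $\mathrm{im}\,\iota=\mathbb{Z}\cdot[1]$; the signature homomorphism $\sigma\colon W(\mathbb{Q})\to\mathbb{Z}$, which satisfies $\sigma(W(K))=\sigma(K)$ and $\sigma([a])=\text{Sign}(a)$; and the signed discriminant, the well-defined function on $W(\mathbb{Q})$ sending a class to $(-1)^{r(r-1)/2}\det(B)$ modulo squares, for $B$ any rank-$r$ representative (well-defined since this quantity is unchanged, modulo squares, by adding a hyperbolic plane). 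Denote this last function by $d_\pm$. Because $W(K)$ is represented by the symmetric form $V+V^\tau$ on $H_1(\Sigma;\mathbb{Q})$, whose rank $2g$ is even and whose determinant equals $\pm\det K$ (its absolute value being the knot determinant), we get that $d_\pm(W(K))$ equals $(-1)^g\det(V+V^\tau)$ modulo squares; in particular $d_\pm(W(K))$ equals $\det K$ or $-\det K$ modulo squares.

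I would first prove the equivalence: $ws(K)\le 1$ holds if and only if $W(K)=[d]\oplus(n\cdot[1])$ for some $d\in\mathbb{Q}^\ast$ and $n\in\mathbb{Z}$. For the ``if'' direction, $W(K)=[d]\oplus(n\cdot[1])$ gives $\partial(W(K))=\partial([d])$ since $n\cdot[1]\in\ker\partial$, and $\partial([d])$ is realized by the rank-$1$ form $\langle d\rangle$ (or by the zero form when $d$ is a square), so $ws(K)\le 1$. For the ``only if'' direction, Definition~\ref{WittSpan} says $ws(K)\le 1$ exactly when $\partial(W(K))=\partial[B]$ for some $B$ of rank $0$ or $1$; hence $\partial(W(K))=\partial([a])$ for some $a\in\mathbb{Q}^\ast$ (take $a=1$ in the rank-$0$ case), so $\partial(W(K)-[a])=0$, and exactness of \eqref{SESForW(Q)} yields an $m\in\mathbb{Z}$ with $W(K)=[a]\oplus(m\cdot[1])$.

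The substantive step is to identify $a$ and $m$ from the relation $W(K)=[a]\oplus(m\cdot[1])$ in $W(\mathbb{Q})$. Applying $\sigma$ gives $\sigma(K)=\text{Sign}(a)+m$, so $m=\sigma(K)-\text{Sign}(a)$. Applying $d_\pm$ gives that $d_\pm(W(K))$ is, simultaneously, equal modulo squares to $\pm\det K$ and to $\pm a$ (the sign in the latter depending only on $m$); since each of $\{\det K,-\det K\}$ and $\{a,-a\}$ is a single orbit under sign change modulo squares, these two-element sets coincide, so $a$ equals $\det K$ or $-\det K$ modulo squares. Picking $d\in\{\det K,-\det K\}$ with $[d]=[a]$ in $W(\mathbb{Q})$ — equivalently, with $a/d$ a positive square, which also forces $\text{Sign}(a)=\text{Sign}(d)$ — we may rewrite $W(K)=[d]\oplus(n\cdot[1])$ with $n=\sigma(K)-\text{Sign}(d)$, as claimed. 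The fussiest point, I expect, will be the sign bookkeeping inside this $d_\pm$ computation and the clean verification that the ambiguity collapses to precisely the two options $d=\pm\det K$; this is routine once one observes that sign change permutes the set $\{\det K,-\det K\}$ modulo squares.

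Finally I would determine $ws(K)$ under the standing hypothesis $W(K)=[d]\oplus(n\cdot[1])$ with $d=\pm\det K$. If $\det K$ is a square then $d$ is $\pm$ a square, so $[d]\in\{[1],[-1]\}\subseteq\mathbb{Z}\cdot[1]=\ker\partial$; hence $\partial(W(K))=0$, giving $ws(K)=0$. If $\det K$ is not a square then, because $\det K>0$, neither $\det K$ nor $-\det K$ is congruent to $\pm 1$ modulo squares, so $[d]\notin\mathbb{Z}\cdot[1]$: indeed $\sigma$ carries $k\cdot[1]$ to $k$, so $[d]=k\cdot[1]$ would force $[d]=\text{Sign}(d)\cdot[1]=[\pm1]$, a contradiction. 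Thus $\partial(W(K))=\partial([d])\ne 0$ and $ws(K)\ge 1$, which together with $ws(K)\le 1$ from the equivalence yields $ws(K)=1$. This would finish the argument.
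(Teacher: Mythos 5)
Your proof is correct, and its skeleton coincides with the paper's: use exactness of \eqref{SESForW(Q)} to write $W(K)=[a]\oplus(m\cdot[1])$ once $ws(K)\le 1$, pin down $m$ with the signature, pin down $a$ up to sign with a determinant-type invariant, and settle the $0$/$1$ dichotomy by whether $\partial(W(K))$ vanishes. The one step you handle by a genuinely different tool is the identification $d=\pm\det K$: the paper diagonalizes the symmetrized Seifert form and invokes the Milnor--Husemoller chain-equivalence moves ($\langle a\rangle\oplus\langle -a\rangle=0$ and $\langle a\rangle\oplus\langle b\rangle=\langle a+b\rangle\oplus\langle ab/(a+b)\rangle$), observing that these moves preserve the determinant up to sign; you instead use the signed discriminant $(-1)^{r(r-1)/2}\det B$ as a well-defined function on $W(\mathbb{Q})$ (its invariance under adding $\mathbb{H}$, together with the fact that over a nondyadic field metabolic forms are sums of hyperbolic planes, is exactly what is needed) and compare its values on the two representatives of $W(K)$. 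The two mechanisms extract the same information, namely the determinant modulo squares up to sign, so nothing is gained or lost mathematically; your version avoids citing the chain-equivalence theorem in favor of a standard Witt invariant, and it is somewhat more explicit than the paper about the exactness step, about why $\operatorname{Sign}(a)=\operatorname{Sign}(d)$ once $a/d$ is a square, and about why $\det K$ being a non-square forces $[d]\notin\mathbb{Z}\cdot[1]$ and hence $ws(K)=1$ --- points the paper treats tersely.
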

%%%
%%%
%%%
%%%
For the next theorem recall that the map $\partial$ from \eqref{SESForW(Q)} is the direct sum of group homomorphisms $\partial _p:W(\mathbb Q) \to W(\mathbb F_p)$ with $p$ ranging over all primes. 
%%%
%%% 
\begin{theorem} \label{TheoremCharacterizingWsTwoThree}
Let $K$ be a knot, let $p\equiv 1 \pmod 4$ be a prime, and let $\beta \in \mathbb F_p$ be any non-square. Assume that $\partial _q(W(K)) = 0$ for all primes $q\ne p$.
%%%
\begin{itemize}
\item[(i)] If $\partial _p(W(K)) = [ \beta]\in W(\mathbb F_p)$ then $ws(K)=2$.
\item[(ii)] If $\partial _p(W(K)) = [ 1 ] \oplus [ \beta] \in W(\mathbb F_p)$ then $ws(K)=3$.  
\end{itemize}
 
\end{theorem}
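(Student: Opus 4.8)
\emph{Proof plan.} My plan is to work entirely with the componentwise description of $\partial$: since $\partial = \bigoplus_q \partial_q$, a form $B$ over $\mathbb{Q}$ satisfies $\partial[B] = \partial(W(K))$ if and only if $\partial_q[B] = \partial_q(W(K))$ for every prime $q$, which by hypothesis means $\partial_q[B] = 0$ for $q \neq p$ and $\partial_p[B]$ equals $[\beta]$ (resp.\ $[1]\oplus[\beta]$). Two elementary facts will be used repeatedly: (a) for a diagonal form $\langle a_1,\dots,a_n\rangle$ over $\mathbb{Q}$, $\partial_q$ of its class is the Witt class over $\mathbb{F}_q$ of the diagonal form on the reductions of the unit parts of those $a_i$ with $v_q(a_i)$ odd; and (b) $\partial_q[B] = 0$ forces $B$ to be isometric over $\mathbb{Q}_q$ to a form of unit $q$-adic determinant, so that $v_q(\det B)$ is even. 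I will invoke $p \equiv 1 \pmod 4$ through the facts that $-1$, hence $\pm 1$, is a square in both $\mathbb{F}_p$ and $\mathbb{Q}_p$, and that $\langle 1, \beta\rangle$ is anisotropic over $\mathbb{F}_p$.

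For part (i), I would first get $ws(K) \geq 2$ by showing that no realizing form has rank $\leq 1$. The zero form has $\partial = 0 \neq \partial(W(K))$; and a rank-one realizing form $\langle d\rangle$ would need $v_q(d)$ even for all $q \neq p$ (from $\partial_q\langle d\rangle = 0$) and $v_p(d)$ odd (from $\partial_p\langle d\rangle = [\beta]\neq 0$), hence $d \sim \pm p$ modulo squares and $\partial_p\langle d\rangle = [\overline{\pm 1}] = [1] \neq [\beta]$, a contradiction (this also follows from Theorem \ref{Consequence1}). For the upper bound I would exhibit an explicit rank-$2$ form: by Dirichlet's theorem, choose a prime $\ell \equiv 3 \pmod 4$ that is a quadratic non-residue modulo $p$, and set $B = \langle \ell,\, p\ell\rangle$. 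A residue computation then gives $\partial_p[B] = [\overline{\ell}] = [\beta]$, and $\partial_q[B] = 0$ for every $q \neq p,\ell$ (trivially at odd $q$ and at $q = 2$, since $\ell$ and $p\ell$ are $q$-adic units there). The one computation that must go right is $\partial_\ell[B] = [\langle 1, \overline{p}\rangle]$: this vanishes precisely because $\ell \equiv 3 \pmod 4$ and quadratic reciprocity (using $p \equiv 1 \pmod 4$) together force $-p$ to be a square modulo $\ell$. Hence $\partial[B] = \partial(W(K))$, giving $ws(K) = 2$.

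For part (ii), the inequality $ws(K) \leq 3$ is immediate from Theorem \ref{main}, so the content is the lower bound $ws(K) \geq 3$, which I would prove by contradiction. Suppose some form $B$ of rank $\leq 2$ realizes $\partial(W(K))$. Then $\partial_p[B] = [1]\oplus[\beta] = [\langle 1,\beta\rangle]$ is anisotropic of rank $2$ over $\mathbb{F}_p$, so in a decomposition $B \cong_{\mathbb{Q}_p} B_0 \perp pB_1$ with $B_0, B_1$ unit forms over $\mathbb{Z}_p$ one is forced to have $\operatorname{rank} B_1 = 2$, $\operatorname{rank} B_0 = 0$, and $\overline{B_1} \cong \langle 1, \beta\rangle$; consequently $\det B$ is, up to squares, a $p$-adic unit whose reduction mod $p$ is a non-square, so $\det B$ is a non-square in $\mathbb{Q}_p$ while $v_p(\det B)$ is even. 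On the other hand $\partial_q[B] = 0$ for all $q \neq p$ makes $v_q(\det B)$ even for every $q \neq p$, so all valuations of $\det B$ are even and $\det B \sim \pm 1$ modulo squares in $\mathbb{Q}^*$. But $\pm 1$ are squares in $\mathbb{Q}_p$ (as $p \equiv 1 \pmod 4$), so $\det B$ would be a square in $\mathbb{Q}_p$, contradicting the previous sentence. Hence $ws(K) \geq 3$, and therefore $ws(K) = 3$.

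I expect the only genuine subtlety to be the prime $2$, specifically fact (b) at $q = 2$: one must verify that $\partial_2[B] = 0$ really forces $v_2(\det B)$ to be even, even though $W(\mathbb{Z}_2)$ and $W(\mathbb{Q}_2)$ are more intricate than at odd primes. This comes down to the observation that every rank-$2$ unimodular $\mathbb{Z}_2$-lattice --- a diagonal form in units, the hyperbolic plane $\langle 1,-1\rangle$, or an even form --- has unit determinant. Everything else should reduce to routine manipulations with Hilbert symbols, quadratic reciprocity, and the structure $W(\mathbb{F}_p) \cong (\mathbb{Z}/2)^2$.
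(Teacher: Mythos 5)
Your proof is correct, and it takes a genuinely different route from the paper's. The upper bounds coincide in substance: your form $\langle \ell\rangle\oplus\langle p\ell\rangle$ is exactly the preimage $[pa]\oplus[a]$ of $[\beta_p]$ constructed in Proposition \ref{PropsitionPreimagesOfDel} (same Dirichlet-plus-reciprocity argument), and in part (ii) invoking the universal bound $ws\le 3$ is legitimate since Lemma \ref{LemmaBoundOnWSFromAboveByThree} is independent of this theorem. The divergence is in the lower bounds, i.e.\ in pinning down the exact value: the paper feeds explicit anisotropic representatives $A_{\pm 2}$ (resp.\ $A_{\pm1},A_{\pm3}$) into the if-and-only-if criteria of Theorem \ref{TheoremAlgorithmForComputingws}, which forces it to verify anisotropy of the rank 3 and 4 forms $A_i\oplus\langle\pm1\rangle$ and, in part (i), to split into the cases $p\equiv 1$ and $p\equiv 5\pmod 8$ over $\mathbb Q_2$ in order to determine $\mathcal I_2$. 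You instead rule out directly every realizing form of rank $\le 1$ (part (i)) or $\le 2$ (part (ii)): the vanishing of all residues away from $p$ forces every prime valuation of the determinant to be even, so the determinant is $\pm$ a rational square, while the prescribed image under $\partial_p$ forces it (or, in the rank-one case, the entry itself) to be a non-square unit in $\mathbb Q_p$ -- a contradiction precisely because $\pm 1$ are $p$-adic squares when $p\equiv 1\pmod 4$. This bypasses the algorithm theorem entirely, is shorter, and isolates exactly where the hypothesis $p\equiv1\pmod4$ enters; the paper's heavier route has the side benefit of producing the signature sets $\mathcal I_2,\mathcal I_3$ and the Witt decompositions of all forms in $\partial^{-1}(\partial(W(K)))$. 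One small remark: the subtlety you flag at $q=2$ dissolves within the paper's own framework, since $\partial_2$ is by definition additive with $\partial_2([\,2^k u\,])=k\cdot[1]$, so for a diagonalized form $\partial_2[B]=v_2(\det B)\cdot[1]\in W(\mathbb F_2)$ and $\partial_2[B]=0$ already gives $v_2(\det B)$ even, with no lattice-theoretic input needed.
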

%%%
%%%
%%%%%%%%%%%%%%%%%%%%%%%%%%%%%%%%%%%%%%%%%%%%%%%%%%%%%%%%%%%%%%%%%%%%%%%%%%
%%%%%%%%%%%%%%%%%%%%%%%%%%%%%%%%%%%%%%%%%%%%%%%%%%%%%%%%%%%%%%%%%%%%%%%%%%
%%%%%%%%%%%%%%%%%%%%%%%%%%%%%%%%%%%%%%%%%%%%%%%%%%%%%%%%%%%%%%%%%%%%%%%%%%
%%%%%%%%%%%%%%%%%%%%%%%%%%%%%%%%%%%%%%%%%%%%%%%%%%%%%%%%%%%%%%%%%%%%%%%%%%%
\subsection{Applications and Examples}  \label{SectonOnExamplesAndApplications}
%%%%%
%%%%% 
Since $\gamma_4(K)$ is a lower bound for $\gamma_c(K)$ \eqref{Double Inequality}, the lower bound on $\gamma_c(K)$ from Theorem \ref{main} is useful for those knots $K$ with $\gamma_4(K)=1, 2$, and for the many more knots $K$ with unknown $\gamma_4(K)$. The examples below showcase such applications. Examples \ref{ExampleLowCrossingKnots} and \ref{ExampleProvingTheoremAboutPretzelKnots} are a consequence of Theorem \ref{TheoremCharacterizingWsTwoThree}, while Example \ref{Example940} follows from the more general Theorem \ref{TheoremAlgorithmForComputingws}.    
%%%
%%%
\begin{example}[Knots with up to 12 crossings]  \label{ExampleLowCrossingKnots}
We consider here the 2978 prime knots with crossing number up to 12. At the time of this writing $\gamma_4$ has only been computed for the 250 knots with crossing number up to 10, see \cite{Ghanbarian, JabukaKelly, KnotInfo}. Of the knots in this familily, those that satisfies criterion (ii) from Theorem \ref{TheoremCharacterizingWsTwoThree}, and which therefore have Witt span equal to 3 and $\gamma_c(K)\ge 3$, are the knots: 
$$
\begin{array}{c}
9_{49},\, \, 11n_{133}, \,\, 12a_{561},\, \, 12a_{664},\, \,  12a_{780}, \, \, 12a_{907},\,\, 12n_{276}.    
\end{array}$$
The choice of prime $p$ from Theorem \ref{TheoremCharacterizingWsTwoThree} is $p=5$ for all knots with the exception of $K=12a_{664}$ where $p=13$. To put these results into perspective, we list the known values of $\gamma_3$ and $\gamma_4$ for these knots from KnotInfo \cite{KnotInfo}:
%%%
$$
\begin{array}{|c|c|c|c|c|} \hline
\text{Knot } K & \gamma_4(K) & ws(K) & \gamma_c(K) & \gamma_3(K) \cr \hline  
9_{49} & 2  &3 & 3 & 3 \cr \hline
11n_{133} & \text{Unknown} & 3 & [3,4] & 4 \cr \hline
12a_{561} & \text{Unknown} &3 & [3,5] & 5 \cr \hline
12a_{664} & \text{Unknown} & 3 & [3,4] &4  \cr \hline
12a_{780} & \text{Unknown} &3 & [3,6] & 6 \cr \hline
12a_{907} & \text{Unknown} &3 & [3,5] &5  \cr \hline
12n_{276} & \text{Unknown} &3 & [3,4] & [2,4] \cr \hline
\end{array}
$$ 
%%%
Observe that $ws(12n_{276})=3$ implies that $\gamma_3(12n_{276})\ge 3$, improving on the current lower bound of 2 from \cite{KnotInfo}. The example of $K=9_{49}$ is the first example of a knot with $\gamma_4(K)=2$ and with $\gamma_4(K) < \gamma_c(K)$.
\end{example}
%%%
%%%
%%%
%%%
The preceding example was chosen for a comparison of our results to those of  \cite{Livingston}. Livingston points out in \cite{Livingston} that Theorem \ref{Livingston's Theorem} is very effective in obstructing $\gamma_c=1$ among low-crossing knots. Specifically, among the 801 prime knots with crossing number 11 or less,  74 knots are either slice or concordant to an alternating torus knot, and thus all have $\gamma_c$ equal to 1. Of the remaining 727 knots, Theorem \ref{Livingston's Theorem} can be used to obstruct all but 4 knots from having $\gamma_c$ equal to 1. These 4 knots are $9_{40}$, $11n_{45}$, $11n_{66}$ and $11n_{145}$. Given this, the next example serves as a comparison between Livingston's obstruction and our lower bound on $\gamma_c$ from Theorem \ref{main}. 
%%%
%%%
\begin{example}[Case of the knot $9_{40}$]  \label{Example940}
For the knot $K=9_{40}$ one obtains $ws(K) = 2$, leading to $\gamma_c(9_{40})\ge 2$. This shows that the bound $ws(K) \le \gamma_c(K)$ is independent from the obstruction to $\gamma_c(K)=1$ by Livingston's Theorem \ref{Livingston's Theorem}. 

It should be noted that Jabuka-Kelly \cite{JabukaKelly} showed that $\gamma_4(9_{40}) = 2$, a result that was not available at the time of publication of Livingston's work \cite{Livingston}, but a fact that can also be used to deduce $\gamma_c(9_{40})\ge 2$. 
\end{example}
%%%
%%%
We next apply Theorem \ref{TheoremCharacterizingWsTwoThree} to the family of pretzel knots $P(a_1, \dots, a_n)$ shown in Figure \ref{PretzelKnot}, with $a_1, \dots, a_n \in \mathbb Z-\{0\}$. In particular we show that there are infinitely many knots for which Theorem \ref{TheoremCharacterizingWsTwoThree} can be used to calculate $\gamma_c$. Note that $K=P(a_1, \dots, a_n)$ is a knot, as opposed to a link, precisely when either 
%%%
\begin{itemize}
\item[(i)] $K$ is an {\em odd pretzel knot}, that is $n$ is odd and each of $a_1, \dots, a_n$ is odd, or 
\item[(ii)] $K$ is an {\em even pretzel knot}, that is exactly one of $a_1, \dots, a_n$ is even, and $n$ may have either parity. 
\end{itemize} 
%%%
%%%
%%%
\begin{figure} 
\includegraphics[width=12cm]{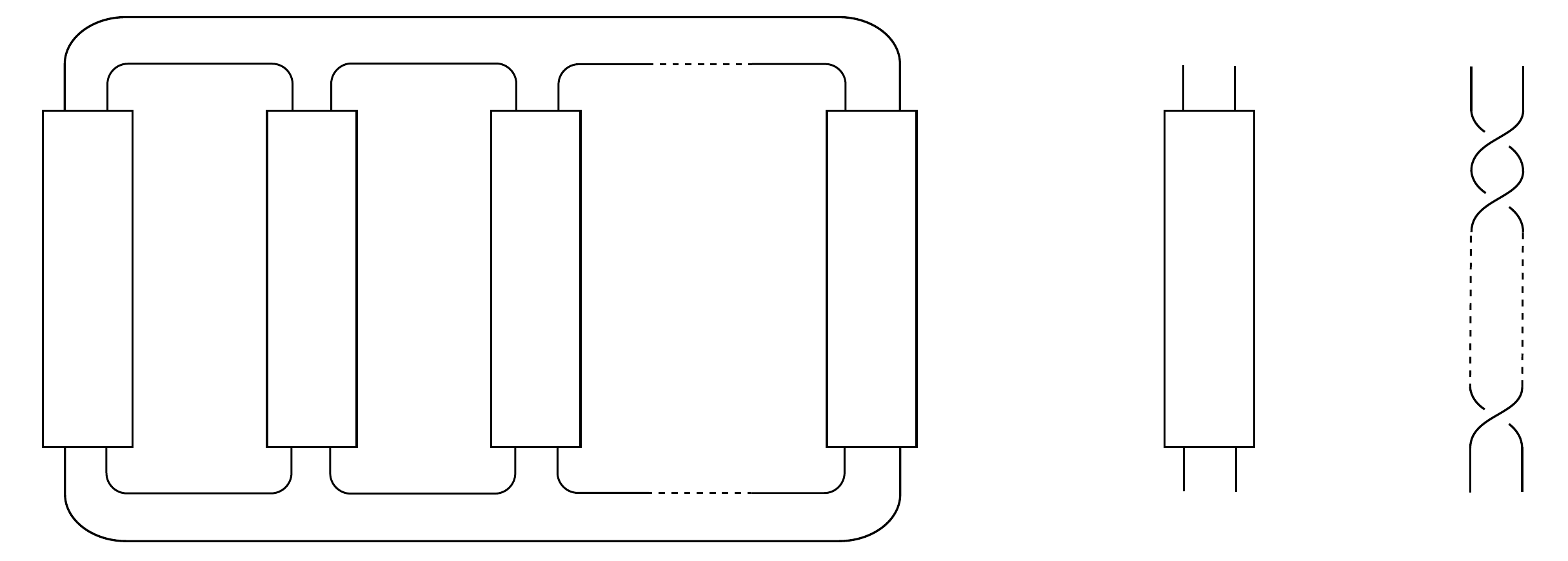}
\put(-327,60){$a_1$}
\put(-279,60){$a_2$}
\put(-230,60){$a_3$}
\put(-155,60){$a_n$}
\put(-80,60){$a$}
\put(-50,60){$=$}
\caption{The family of pretzel knots $P(a_1, \dots, a_n)$. Each 4-stranded box labeled with a nonzero integer $a$ represents a pair of strands with $a$ right-handed half-twists if $a>0$, and $-a$ left-handed half-twists is $a<0$.  } \label{PretzelKnot}
\end{figure}
%%%
%%%
Concrete spanning surfaces for both odd and even pretzel knots as in Figure \ref{SpanningSurfacesForPretzelKnots}, make it clear that for $K=P(a_1,\dots, a_n)$ one has  
\begin{equation} \label{EquationUpperBoundsOnPretzelKnots}
ws(K) \le n-1 \qquad \text{ and } \qquad \gamma_c(K)\le\gamma_3(K)\le  \left\{
\begin{array}{cl}
n & \quad ; \quad K \text{ is odd,} \cr
n-1 &  \quad ; \quad K \text{ is even.}
\end{array}
\right. 
\end{equation}
%
%%%
%%%
\begin{figure} 
\includegraphics[width=12cm]{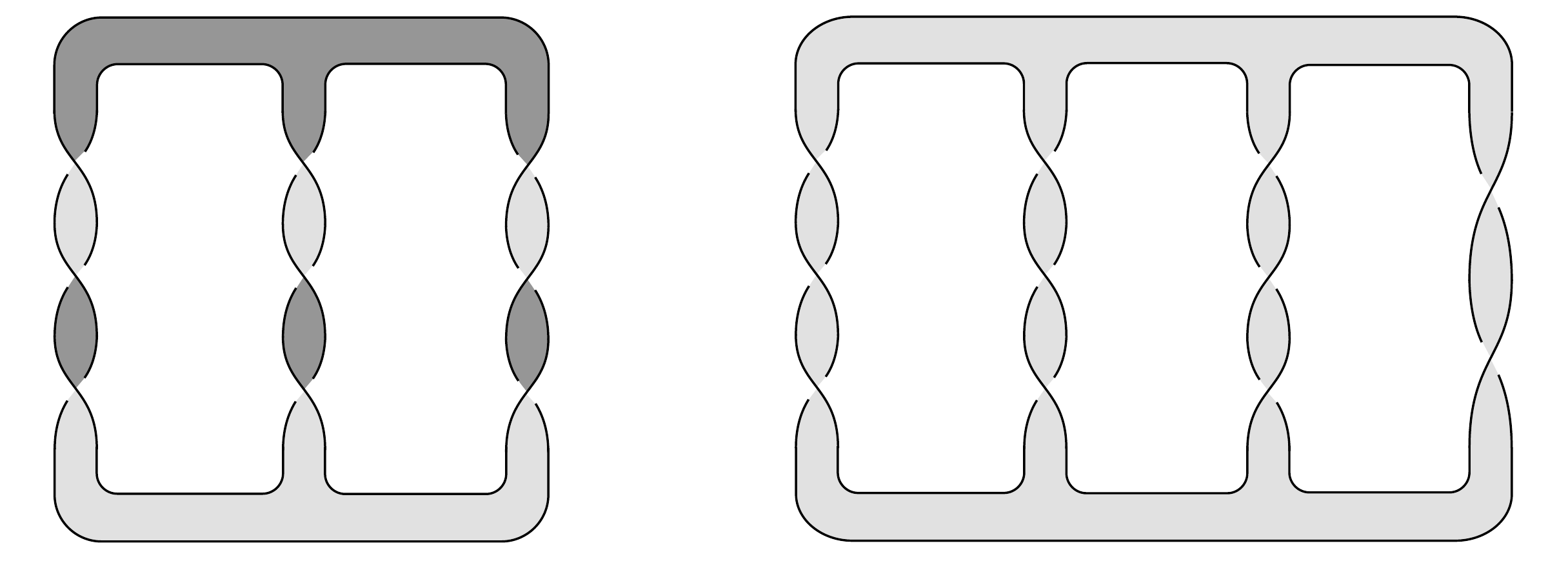}
\put(-283,-20){(a)}
\put(-96,-20){(b)}
\caption{The shaded spanning surface for the pretzel knot $K=P(a_1, \dots, a_n)$ is orientable of genus $(n-1)/2$ if $K$ is an odd pretzel knot (Subfigure (a) where $K=P(-3,-3,3)$), or nonorientable with first Betti number $n-1$ if $K$ is an even pretzel knot (Subfigure (b) where $K=P(-3,-3,3,2)$).   } \label{SpanningSurfacesForPretzelKnots}
\end{figure}
%%%
%%%
Indeed, it was shown by Ichihara and Mizushima \cite{IchiharaMizushima} that $\gamma_3(P(a_1,\dots, a_n))$ equals $n$ (respectively $n-1$) if $P(a_1, \dots, a_n)$ is an odd (respectively an even) pretzel knot. Of interest to us is the case of an even knot $K=P(a_1, \dots, a_n)$ with $ws(K) = n-1$, as for any such knot one obtains $\gamma_c(K) = n-1$ as well. We show that this happens for infinitely many even pretzel knots with $n=3, 4$.  
%%%
%%%
\begin{theorem} \label{TheoremAboutPretzelKnots}
Let $n$ equal to 3 or 4, then there exist infinitely many even $n$-stranded pretzel knots with Witt span and nonorientable concordance genus equal to $n-1$. 
%%%
\end{theorem}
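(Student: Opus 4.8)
I would prove Theorem~\ref{TheoremAboutPretzelKnots} by exhibiting explicit infinite families of even pretzel knots — one family with $n=3$ and one with $n=4$ — whose Witt classes satisfy the hypotheses of Theorem~\ref{TheoremCharacterizingWsTwoThree}. Since the upper bounds $ws(K)\le n-1$ and $\gamma_3(K)\le n-1$ already hold for even pretzel knots by \eqref{EquationUpperBoundsOnPretzelKnots} (together with the computation of $\gamma_3$ for pretzel knots by Ichihara--Mizushima), and since $ws(K)\le\gamma_c(K)\le\gamma_3(K)$ by Theorem~\ref{main} and \eqref{Double Inequality}, it suffices to find such knots with $ws(K)\ge n-1$. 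For $n=3$ this means $ws(K)=2$, so I would target case (i) of Theorem~\ref{TheoremCharacterizingWsTwoThree}; for $n=4$ this means $ws(K)=3$, so I would target case (ii).

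\medskip

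First I would write down, from the nonorientable spanning surface of Figure~\ref{SpanningSurfacesForPretzelKnots}(b), the Gordon--Litherland (or $V+V^\tau$) symmetric form for $K=P(a_1,\dots,a_n)$ with exactly one $a_i$ even; this is a small $(n-1)\times(n-1)$ integral symmetric matrix whose entries are the $a_i$'s on the diagonal (up to signs and conventions) and $\pm1$'s or $0$'s off-diagonal. From this I would compute $W(K)\in W(\mathbb{Q})$ via its rational diagonalization, and then compute each $\partial_q(W(K))\in W(\mathbb{F}_q)$ using the standard residue formulas: $\partial_q\langle u q^k\rangle$ is trivial if $k$ is even and equals $\langle \bar u\rangle$ (up to the usual square-class correction) if $k$ is odd. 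The goal is to choose the parameters $a_1,\dots,a_n$ so that exactly one prime $p\equiv1\pmod4$ survives, i.e. $\partial_q(W(K))=0$ for all $q\neq p$, and so that $\partial_p(W(K))$ equals $[\beta]$ (for $n=3$) or $[1]\oplus[\beta]$ (for $n=4$) with $\beta$ a non-square mod $p$. A natural way to guarantee the "one surviving prime" condition is to force the determinant (equivalently, the discriminant of the form) to be of the shape $\pm p\cdot(\text{square})$ or $\pm p^2\cdot(\text{square})$ times controlled factors, and then to vary a free parameter through an arithmetic progression to obtain infinitely many knots — e.g. setting most $a_i$ equal to fixed small values and letting the remaining parameter run over an infinite set of integers for which $\det K$ has the desired prime factorization and $p$-adic valuations work out.

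\medskip

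Concretely, for $n=3$ I would look at a family such as $P(a,b,2c)$ (one even entry $2c$) with $a,b$ fixed and $c$ varying, or $P(p,\pm p, 2c)$-type families, computing $\det K = \pm(ab+2bc+2ca)$ (with appropriate signs/conventions) and the signature, and then tuning so that $\partial_p(W(K))=[\beta]$ for a fixed $p\equiv1\pmod4$ while all other residues vanish; distinct values of the varying parameter give distinct knots (they have distinct determinants, hence are non-concordant, in fact non-isotopic). For $n=4$ I would do the analogous thing with a family like $P(a,b,c,2d)$, arranging $\partial_p(W(K))=[1]\oplus[\beta]$; here the rank-$3$ form has to witness a rank-$3$ anisotropic-at-$p$ obstruction, so the signature and the $p$-adic Hasse invariant both need to be controlled simultaneously. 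In each case I would verify the knots are genuinely knots (parity conditions on the $a_i$), genuinely distinct, and that the $\gamma_3$ upper bound of $n-1$ is met, so that $ws=\gamma_c=\gamma_3=n-1$.

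\medskip

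\textbf{The main obstacle.} The computational heart — and the step I expect to be fussiest — is pinning down $\partial_p(W(K))$ \emph{on the nose} (not just up to adding hyperbolics) for an entire infinite family. It is easy to arrange that $p\mid\det K$; it is harder to simultaneously ensure (a) that no \emph{other} odd prime survives, which forces a delicate factorization condition on $\det K$ as the parameter varies (one likely needs to restrict the parameter to an arithmetic progression, or to values $\pm1$ away from a square, so that the "other" factor of the determinant is always a perfect square), and (b) that the local invariant at $p$ is exactly $[\beta]$ or $[1]\oplus[\beta]$ rather than the trivial class or $[1]$ — this is a condition on the $p$-adic valuation of $\det K$ being \emph{odd} (for case (i)) together with a Legendre-symbol condition, or on a rank-$3$ Hasse-invariant computation (for case (ii)). Keeping track of the square-class of the leading coefficient and of the signature contributions through the rational diagonalization of the $(n-1)\times(n-1)$ pretzel matrix, uniformly in the varying parameter, is where the real work lies; once the right family is identified, infinitude and distinctness are immediate from the determinant formula.
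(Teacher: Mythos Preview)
Your strategy is exactly the paper's: exhibit explicit infinite families of even $n$-stranded pretzel knots meeting the hypotheses of Theorem~\ref{TheoremCharacterizingWsTwoThree}, then sandwich $\gamma_c$ between $ws$ and $\gamma_3$ via Theorem~\ref{main} and \eqref{EquationUpperBoundsOnPretzelKnots}. The only thing you are missing is the choice of families that dissolves the ``main obstacle'' you flag, and the paper's choices are worth knowing because they sidestep the determinant-factorization difficulty entirely.

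For $n=3$ the paper takes $K_p=P(p,p,2p)$ and lets $p$ itself range over primes with $p\equiv 1\pmod 4$ and $p\equiv 1\pmod 5$; the Witt class (from \cite{JabukaWitt}) is $[2p]\oplus[10p]\oplus(2p\cdot[-1])$, so the only primes that could contribute to $\partial$ are $2$, $5$, and $p$. The residue at $2$ cancels for parity reasons, and the residue at $p$ vanishes because the congruence conditions force $5$ (hence $-20$) to be a square mod $p$ by quadratic reciprocity; only $\partial_5=[2]=[\beta_5]$ survives. For $n=4$ the paper takes $K_m=P(5,5,5^{2m+1}-2,10)$, whose Witt class is $[10]\oplus[2]\oplus[5]\oplus((18+5^{2m+1})\cdot[-1])$; here the varying parameter sits entirely inside the $[-1]$-block, so $\partial_q(W(K_m))$ is literally independent of $m$ and one checks once that it equals $[1]\oplus[2]\in W(\mathbb F_5)$ and vanishes elsewhere. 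In both cases, then, there is no need to control the factorization of $\det K$ as your proposal anticipates: the trick is to arrange either that the surviving prime is fixed and the congruences on the moving prime kill its own residue (case $n=3$), or that the moving parameter only changes the signature part of $W(K)$ (case $n=4$).
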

%%%
%%%
The rational Witt classes for all pretzel knots have been computed in \cite{JabukaWitt}. Relying on these calculations, the next example proves the preceding theorem. 
\begin{example} \label{ExampleProvingTheoremAboutPretzelKnots}
Examples of families of even 3-stranded pretzel knots that satisfy the conclusions of the preceding theorem are given by $K_p= P(p,p,2p)$, with $p$ any prime congruent to $1 \pmod 5$ and congruent to $1 \pmod 4$ (e.g. any of the infinitely many primes in the arithmetic progression $1+20k$, $k\ge 1$).  
The rational Witt classes of these knots are easily computed to be (cf. \cite{JabukaWitt})
:%
$$W(K_p) = [2p]\oplus [10p]\oplus(2p\cdot [-1]) .$$
It follows that 
$$ 
\partial _q W((K_p)) =  \left\{ 
\begin{array}{cl}
[2]\in W(\mathbb F_{5}) & \quad ; \quad q=5, \cr
0 & \quad ; \quad   q\ne 5. 
\end{array}
\right.
$$
Since 2 is not a square modulo 5, Part (i) of Theorem \ref{TheoremCharacterizingWsTwoThree} shows that then $ws(P(p,p,2p))=2$ and thus also that $\gamma_c(P(p,p,2p)) = 2$.
 %%%%%%%%%%%
\vskip3mm
Next consider the even 4-stranded pretzel knots $K_m = P(5,5,5^{2m+1}-2, 10)$ with $m\ge 0$. From \cite{JabukaWitt} one obtains 
$$W( K_m) = [10]\oplus [2]\oplus [5]\oplus ((18+5^{2m+1})\cdot [-1]).$$
It follows that $\partial_5(W(K_m)) = [1]\oplus [2] \in W(\mathbb F_5)$ and if $r\ne 5$ is any other prime, then $\partial_r(W(K_m)) = 0$. Part (ii) of Theorem \ref{TheoremCharacterizingWsTwoThree} implies that $ws(K_m) =3$ and thus $\gamma_c(K_m) = \gamma_3(K_m)= 3$ for all $m\ge 0$. This proves Theorem \ref{TheoremAboutPretzelKnots}. 
\end{example}
%%%
%%%
\subsection{Concluding remarks}
The considerations from Section \ref{SectonOnExamplesAndApplications} and the results of Theorem \ref{TheoremAboutPretzelKnots} prompt the following conjecture.  
\begin{conjecture}
For every $n\ge 3$ there exist infinitely many even $n$-stranded pretzel knots $K=P(a_1,\dots, a_n)$ with $\gamma_c(K) = n-1$. 
\end{conjecture}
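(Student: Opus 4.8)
\medskip
\noindent\emph{Proof proposal.}
The plan is to split the desired equality $\gamma_c(K)=n-1$ into its two halves. The upper bound $\gamma_c(K)\le n-1$ is automatic from the double inequality \eqref{Double Inequality} together with the computation $\gamma_3\big(P(a_1,\dots,a_n)\big)=n-1$ for even pretzel knots due to Ichihara and Mizushima \cite{IchiharaMizushima}, so the entire problem is the matching lower bound $\gamma_c(K)\ge n-1$ for infinitely many even $n$-stranded pretzel knots. For $n=3$ and $n=4$ this is exactly Theorem \ref{TheoremAboutPretzelKnots}, proved via the Witt span in Example \ref{ExampleProvingTheoremAboutPretzelKnots}. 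For $n\ge 5$, however, one must leave the Witt span behind: since $ws(K)\le 3$ for every knot by Theorem \ref{main}, that theorem can never force $\gamma_c(K)\ge 4$, so a genuinely new obstruction is required.

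My first attempt would be to refine, rather than discard, the mechanism behind $ws(K)\le\gamma_c(K)$. If $K'$ is concordant to $K$ and $\gamma_3(K')=g$, then $K'$ bounds a nonorientable surface $F\subset S^3$ whose Gordon--Litherland form $\mathcal G_F$ is a rank-$g$ symmetric form with $|\det\mathcal G_F|=|\det K'|$ and $\operatorname{sig}(\mathcal G_F)-\tfrac12 e(F)=\sigma(K')$, and whose rational Witt class recovers $W(K')=W(K)$ up to a correction controlled only by the integer $e(F)$. The Witt span uses nothing but the $\partial$-image of this data; instead I would minimise $\operatorname{rank}B$ over rational forms $B$ realising $\partial[B]=\partial(W(K))$ \emph{and} a signature compatible with $\sigma(K)$ and an Euler-number parameter. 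The hope is that coupling the $\mathbb Z$-part of $W(\mathbb Q)$ to its $\partial$-part lifts the universal bound above $3$; the likely reality is that, with $u(\mathbb Q_p)=4$ at every prime and the signature of a rational form otherwise unconstrained, such a first-order invariant stays bounded, so this step on its own will not reach $n-1$ for large $n$.

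The more promising, and harder, route is to exploit $\gamma_4(K)\le\gamma_c(K)$ directly. The plan would be to fix, for each $n\ge 5$, an explicit infinite family $K_m=P(a_1(m),\dots,a_n(m))$ of even pretzel knots chosen with mixed signs so that the knots are non-alternating (hence the Batson \cite{Batson} and Ozsv\'ath--Stipsicz--Szab\'o \cite{OSS} obstructions need not vanish), to identify the double branched cover $\Sigma_2(K_m)$ as surgery on a connected sum of torus knots or as a Seifert fibered space, and to compute its Heegaard Floer $d$-invariants together with $\sigma(K_m)$, feeding the result into Batson's lower bound for $\gamma_4$. The hard part --- and the place where I expect the real obstacle to lie --- is showing that this lower bound actually grows linearly in $n$ and attains $n-1$: at present no even pretzel knot with $n\ge 5$ is known to have $\gamma_c$ (indeed not even $\gamma_4$) equal to $n-1$, so carrying this through will demand either a new sharp $\gamma_4$ bound for Seifert fibered double branched covers, or a concordance-theoretic rigidity theorem for knots of small crosscap number in the spirit of Clark's Theorem \ref{Clark's Theorem}.
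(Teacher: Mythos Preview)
The statement you are addressing is a \emph{Conjecture} in the paper, not a theorem, and the paper offers no proof of it. Immediately after stating the conjecture the author writes: ``At present there are no lower bounds on $\gamma_c$ effective for $n\ge 5$ that may facilitate a proof of this postulation.'' So there is no ``paper's own proof'' to compare against.

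Your proposal is accordingly not a proof either, and you are candid about this: you correctly observe that the cases $n=3,4$ are exactly Theorem \ref{TheoremAboutPretzelKnots}, that the Witt span is useless for $n\ge 5$ because $ws(K)\le 3$ universally, and that a genuinely new obstruction would be required. The two avenues you sketch --- a signature-coupled refinement of the Witt span, and a Heegaard Floer/$d$-invariant attack on $\gamma_4$ for suitable pretzel families --- are reasonable research directions, but you yourself flag the likely failure of the first (the universal bound on anisotropic ranks over $\mathbb Q_p$ persists) and the open-ended nature of the second (no even pretzel knot with $n\ge 5$ is currently known to have $\gamma_4=n-1$). In short, your write-up is an honest assessment of why the conjecture is open, in full agreement with the paper's own remarks; it is not, and does not claim to be, a proof.
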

%%%
At present there are no lower bounds on $\gamma_c$ effective for $n\ge 5$ that may facilitate a proof of this postulation. 
\vskip3mm
Recall the double inequality \eqref{Double Inequality}, by which $\gamma_4(K) \le \gamma_c(K) \le \gamma_3(K)$. Examples of knots $K$ for which $\gamma_3(K) - \gamma_4(K)$ can become arbitrarily large are known, see for instance \cite{JabukaVanCott}. Thus at least one of $\gamma_3(K) - \gamma_c(K)$ or $\gamma_c(K) - \gamma_4(K)$ also must grow without bound for such knots. It would be interesting to see if both quantities can be made unbounded simultaneously, prompting this question. 
\begin{question}
For each $n\in \mathbb N$ does there exist a knot $K_n$ with 
$$\gamma_3(K_n) - \gamma_c(K_n) \ge n \qquad \text{ and } \qquad \gamma_c(K_n) -\gamma_4(K_n)\ge n?$$
\end{question}
%%%
%%%
%%%%%%%%%%%%%%%%%%%%%%%%%%%%%%%%%%%%%%%%%%%%%%%%%%
%%%%%%%%%%%%%%%%%%%%%%%%%%%%%%%%%%%%%%%%%%%%%%%%%%
%%%%%%%%%%%%%%%%%%%%%%%%%%%%%%%%%%%%%%%%%%%%%%%%%%
%%%%%%%%%%%%%%%%%%%%%%%%%%%%%%%%%%%%%%%%%%%%%%%%%%
\subsection{Organization} In Section \ref{Arithmetic} we give a detailed review of the theory of symmetric bilinear forms over fields. The goal of Sections \ref{SectionValuationTheory} and \ref{SectionOnBilinearForms}  is a complete and algorithmic understanding of when a bilinear symmetric form over $\mathbb Q$ is (an)isotropic, a goal that is achieved by a combination of  Corollary \ref{HasseMinkovskiOverQ} and Theorems \ref{FormsOverFiniteFields} and \ref{TheoremCharacterizingSmallRankFormsOverQp}. Section \ref{SectionOnWittRings} gives background on Witt rings, while Section \ref{SectionOnProofs} provides the proofs for  Theorems \ref{main}, \ref{Consequence1} and \ref{TheoremCharacterizingWsTwoThree}. Section \ref{SectionOnProofs} also justifies the claim made in Example \ref{Example940}.  
%%%%%%%%%%%%%%%%%%%%%%%%%%%%%%%%%%%%%%%%
%%%%%%%%%%%%%%%%%%%%%%%%%%%%%%%%%%%%%%%%
%%%%%%%%%%%%%%%%%%%%%%%%%%%%%%%%%%%%%%%%
%%%%%%%%%%%%%%%%%%%%%%%%%%%%%%%%%%%%%%%%
\section{Arithmetic of Bilinear Forms over Fields} \label{Arithmetic}
%%%
%%%
This section provides background on the arithmetic of bilinear forms over fields.  The main goal is to understand properties of bilinear forms over $\mathbb Q$, an endeavor which requires an understanding of bilinear forms over $\mathbb Q_p$, the field of $p$-adic numbers, and quadratic forms over $\mathbb R$.  The discussion in this section follows those from \cite{Cassels, Gerstein, Hasse, Lam, MilnorHusemoller, OMeara}.

\subsection{Valuations, Completions and the Residue Class Field} \label{SectionValuationTheory}
%%%
%%%
%%%%%%%%%%%%%%%%%%%%%%%%%%%%%%%%%%%%%%%%%%%%%%%%%%%%%%%%%%%%%%%%%%%%%%%%%%%%%%%%%%%%%%%%%%%
\subsubsection{Valuations}
%%%
%%%
A {\em valuation $v$ on a field $\mathbb F$} is a function $v:\mathbb F\to \mathbb R$ subject to the following requirements, all which need to hold for all $a, b \in \mathbb F$:
\begin{itemize}
\item[(i)] $v(a) \ge 0$ and $v(a) = 0$ if and only if $a=0$. 
\item[(ii)] $v(ab) = v(a)\cdot v(b)$. 
\item[(iii)] $v(a+b) \le v(a) +  v(b)$.
\end{itemize}
A valuation $v$ is called {\em non-Archimedian} if instead of (iii) it satisfies the stronger condition of 
\begin{itemize}
\item[(iii)'] $v(a+b) \le \max\{v(a), v(b)\}, $
\end{itemize}
otherwise it is called an {\em Archimedean valuation}. The pair $(\mathbb F, v)$ is called a {\em valuated field}. For later use note that $v(1) = 1$, this is implied by properties (i) and (ii) above. A direct consequence of this is that $v(a^{-1}) = v(a)^{-1}$ for all nonzero $a\in \mathbb F$, a relation we shall tacitly rely on below.  

Two valuations $v_1$ and $v_2$ on $\mathbb F$ are called {\em equivalent} if there exists an $\eps>0$ such that $v_2(a) = (v_1(a))^\eps$ for all $a\in \mathbb F$.  This is an equivalence relation on the set of valuations on $\mathbb F$, and the equivalence class $[v]$ of a valuation is called a {\em  spot} or {\em prime} or {\em prime spot}  or {\em place} on $\mathbb F$. We shall refer to it as a spot in this exposition. The set of all spots on the field $\mathbb F$ is denoted by $\Omega$ or $\Omega (\mathbb F)$ for emphasis. We say that a valuation $v'$ represents the spot $[v]$ if $[v']=[v]$. 

\begin{example}[The Trivial Valuation]
Every field possesses valuations. One example is that of the {\em trivial valuation} $v_0$ defined as 
$$v_0(a) = \left\{
\begin{array}{cl}
0 & \quad ; \quad a=0,\cr
1 & \quad ; \quad a\ne 0. 
\end{array}
\right.$$
Note that the trivial valuation is non-Archimedean. The only representative of the {\em trivial spot $[v_0]$} is the trivial valuation $v_0$. 
\end{example}
%%%
%%%
\begin{example}[Valuations on $\mathbb Q$]  \label{ValuationsOnQ}
For each prime $p$, consider the function $v_p:\mathbb Q\to \mathbb R$, obtained as follows: Given a rational number $r\ne 0$, write is as $r = p^n \frac{a}{b}$ with $n, a, b\in \mathbb Z$, $b\ne 0$, and with $a$ and $b$ not divisible by $p$. Then $v_p$, called the {\em $p$-adic valuation on $\mathbb Q$}, is defined as
$$ v_p(r) = v_p\left( p^n \frac{a}{b} \right) = \frac{1}{p^n} \quad \quad \text{ and } \quad \quad v_p(0) = 0.$$
It is easy to see that  each $v_p$ is a non-Archimedean valuation on $\mathbb Q$. Additionally, let $v_\infty:\mathbb Q \to \mathbb R$ be the usual absolute value function on $\mathbb Q$. This too is a valuation on $\mathbb Q$, albeit an Archimedean one. These examples of valuations generate all spots on $\mathbb Q$, that is 
$$\Omega (\mathbb Q) = \{ [v_p]\,\,  |\, \, p \text{ is prime or } p=0 \text{ or } p=\infty\},$$
a fact known as ``Ostrowski's Theorem'', dating back to 1916 \cite{Ostrowski}.
\end{example}
%%%
%%%
%%%%%%%%%%%%%%%%%%%%%%%%%%%%%%%%%%%%%%%%%%%%%%%%%%%%%%%%%%%%%%%%%%%%%%%%%%%%%%%%%%%%%%%%%%%
\subsubsection{Completions}
A valuation $v$ gives $\mathbb F$ the structure of a metric space, with distance function $d_v(a,b) = v(a-b)$, $a,b \in \mathbb F$. In particular, $v$ endows $\mathbb F$ with a topology, and we say that $(\mathbb F, v)$ is a {\em complete valuated field} if $(\mathbb F, d_v)$ is a complete metric space, i.e. one in which every Cauchy sequence is convergent. A {\em completion} of a valuated field $(\mathbb F, v)$ is a complete valuated field $(\mathbb E, w)$ such that 
\begin{itemize}
\item[(i)] $\mathbb F$ is a subfield of $\mathbb E$ and $w|_\mathbb{F} = v$.
\item[(ii)] $\mathbb F$ is dense in $\mathbb E$ (in the topological sense). 
\end{itemize} 
\begin{theorem}[\cite{OMeara}, Pages 10--11]
Every valuated field $(\mathbb F, v)$ has a completion $(\mathbb E, w)$. This completion is unique in the sense that if $(\mathbb E, w)$ and $(\mathbb E',w')$ are two completions of $(\mathbb F, v)$, then there exists a field isomorphism $\varphi :\mathbb E \to \mathbb E'$ which is the identity on $\mathbb F$ and such that $w(x) = w'(\varphi(x))$ for all $x\in \mathbb E$.  
\end{theorem}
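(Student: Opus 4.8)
The plan is to follow the classical Cauchy-sequence construction, modeled on the passage from $\mathbb{Q}$ to $\mathbb{R}$, and then to deduce uniqueness by a density-and-continuity argument. For existence, I would first let $C$ denote the set of all Cauchy sequences $(a_n)$ in the metric space $(\mathbb{F}, d_v)$ and observe that $C$ is a commutative ring under termwise addition and multiplication. Closure under multiplication is where properties (ii) and (iii) (or (iii)$'$) of the valuation enter: from $a_nb_n - a_mb_m = a_n(b_n - b_m) + b_m(a_n - a_m)$ one gets $v(a_nb_n - a_mb_m) \le v(a_n)\,v(b_n - b_m) + v(b_m)\,v(a_n - a_m)$, and every Cauchy sequence is bounded in valuation, so the right-hand side is small for large $n,m$. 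Next I would let $N \subset C$ be the set of null sequences (those with $v(a_n) \to 0$), check that $N$ is a proper ideal, and set $\mathbb{E} = C/N$.

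The main obstacle is to show that $\mathbb{E}$ is a field, i.e. that $N$ is maximal. The key lemma is that a Cauchy sequence $(a_n) \notin N$ is eventually bounded away from $0$: there exist $c > 0$ and $M$ with $v(a_n) \ge c$ for all $n \ge M$. Granting this, the termwise reciprocal (with the finitely many small or vanishing terms replaced by $1$) is again Cauchy, since $v(a_n^{-1} - a_m^{-1}) = v(a_n - a_m)\,v(a_n)^{-1}v(a_m)^{-1} \le c^{-2}\,v(a_n - a_m)$, and its class inverts $[(a_n)]$ in $\mathbb{E}$. This is the step where multiplicativity of $v$ is indispensable. I would then extend the valuation by $w([(a_n)]) = \lim_n v(a_n)$: the limit exists because $|v(a_n) - v(a_m)| \le v(a_n - a_m)$ (a consequence of (iii)) shows $(v(a_n))$ is Cauchy in $\mathbb{R}$, and it is independent of the representative since null sequences do not affect the limit. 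Checking that $w$ satisfies (i)--(iii) and restricts to $v$ on the copy of $\mathbb{F}$ embedded as constant sequences is then routine, and the Archimedean or non-Archimedean character of $v$ passes to $w$ by taking limits in the defining inequalities.

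To finish existence I would verify density and completeness. Density is immediate: $[(a_n)]$ is the $w$-limit of the constant-sequence classes of its own terms $a_n$. Completeness is the one genuinely diagonal argument: given a Cauchy sequence $(x^{(k)})$ in $\mathbb{E}$, I would use density to pick $b_k \in \mathbb{F}$ with $w(x^{(k)} - b_k) < 1/k$, show that $(b_k)$ is Cauchy in $\mathbb{F}$, and prove that $(x^{(k)})$ converges to $[(b_k)] \in \mathbb{E}$.

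For uniqueness, given two completions $(\mathbb{E}, w)$ and $(\mathbb{E}', w')$, I would define $\varphi \colon \mathbb{E} \to \mathbb{E}'$ by choosing, for each $x \in \mathbb{E}$, a sequence $(a_n)$ in $\mathbb{F}$ with $a_n \to x$; such a sequence is Cauchy in $\mathbb{F}$, hence Cauchy in $\mathbb{E}'$ because $w'|_{\mathbb{F}} = v = w|_{\mathbb{F}}$, and so converges to a unique $x' \in \mathbb{E}'$ by completeness of $\mathbb{E}'$, and I set $\varphi(x) = x'$. Well-definedness holds since two $\mathbb{F}$-sequences converging to the same $x$ differ by a null sequence; continuity of $+$, $\times$ and of the valuation then makes $\varphi$ a valuation-preserving ring homomorphism fixing $\mathbb{F}$ pointwise. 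Finally $\varphi$ is injective as any nonzero homomorphism of fields, and surjective because the symmetric construction produces an inverse, or directly because each $x' \in \mathbb{E}'$ is approximated by elements of $\mathbb{F}$ whose limit in $\mathbb{E}$ maps to it.
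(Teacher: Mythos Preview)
Your proposal is correct and follows exactly the Cauchy-sequence approach that the paper itself outlines; the paper does not give a full proof but merely cites \cite{OMeara} and sketches the construction of $\mathbb{E}$ as equivalence classes of Cauchy sequences with $w$ defined as $\lim v(x_n)$, which is precisely what you carry out in detail. Your write-up fills in the standard verifications (maximality of the null ideal, completeness via a diagonal argument, uniqueness by density and continuity) that the paper omits.
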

%
%%%
\begin{remark}
The completion of valuated field $(\mathbb F, v)$ only depends on the spot $[v]$, in the sense that if $v'$ is another valuation on $\mathbb F$ with $[v']=[v]$, and $(E,w)$ and $(E',w')$ are completions of $(F,v)$ and $(\mathbb F, v')$ respectively, then there exists an isomorphism $\varphi :E\to E'$, fixing $F$, and such that $[w] = [w'\circ \varphi]$. When convenient, we shall refer to the  completion of $\mathbb F$ at one of its spots.     
\end{remark}
%%%
There are different ways of constructing the completion $(\mathbb E,w)$ of $(F,v)$. For example, one can let $\mathbb E$ be the set of equivalence classes $[x]$ of Cauchy sequences $x=(x_1, x_2, \dots )$ in $(\mathbb F,v)$, with $(x_1, x_2, \dots)$ equivalent to $(y_1,y_2, \dots)$ if $\lim v(x_n)-v(y_n) = 0$. The field $\mathbb F$ embeds into $\mathbb E$ by the obvious map $a\mapsto (a,a,a,\dots)$. The valuation $v$ extends to a valuation $w$ on $\mathbb E$ via $w((x_1, x_2, \dots)) = \lim v(x_n)$. For us however, the more practical approach to describing elements from $\mathbb E$ shall be that using the canonical power series representation, applicable when $v$ is a discrete valuation, see Theorem \ref{TheoremGersteinOnCanonicalPowerSeriesRepresentation}. 
\begin{example} \label{ExampleFirstIntroductionOfThePAdicNumbers}
Let $p$ be either a prime or else $p=0, \infty$. Of the valuated fields $(\mathbb Q, v_p)$ only $(\mathbb Q, v_0)$ is complete, with $d_{v_0}$ being the discrete metric on $\mathbb Q$. The completion of $(\mathbb Q, v_\infty)$ is $(\mathbb R, |\cdot|)$. The unique completion $(\mathbb Q_p, w_p)$ of $(\mathbb Q, v_p)$ for $p\ne 0,\infty$  is called the {\em  field of the $p$-adic numbers}. 
\end{example}
%
%%%%%%%%%%%%%%%%%%%%%%%%%%%%%%%%%%%%%%%%%%%%%%%%%%%%%%%%%%%%%%%%%%%%%%%%%%%%%%%%%%%%%%%%%%%
\subsubsection{The  Residue Class Field and Discrete Valuations}
For a non-Archimedean valuated field $(\mathbb F, v)$ we define 
\begin{align} \label{RingOfIntegers}
A & =A(v) = \{ a\in \mathbb F\, |\,  v(a) \le 1\} = \text{ The {\em valuation ring} or  {\em ring of integers} of $\mathbb F$ at $v$}. \cr
U & = U(v) = \{a\in \mathbb F\, |\, v(a) = 1\} = \text{ The {\em (multiplicative) group of units of} }A. \cr
\mathfrak p & = \mathfrak p(v) = \{ a\in \mathbb F\, |\,  v(a) <1\} = \text{ The unique maximal ideal of $A$}. 
\end{align}
Note that $A$, $U$ and $\mathfrak p$ are independent of the choice of $v$ from $[v]$, these objects only depend on the spot determined by $v$. %The field of quotients of $A$ recovers $\mathbb F$. 

It is easy to show that $A$ is a subring of $\mathbb F$. If $a\in A$ is a unit with inverse $b\in A$, then $1=v(1) = v(ab) = v(a)v(b)$, which together with $v(a), v(b) \ge 1$ (by virtue of being elements of $A$) implies that $v(a) = 1 = v(b)$. Conversely, if $a\in A$ and $v(a) = 1$, let $b\in \mathbb F$ be the inverse of $a$. Then again $1=v(1) =v(ab) = v(a) v(b) = v(b)$ showing that $b\in A$. This shows that $U$ is the group of units of $A$. 

Since $\mathfrak p = A-U$, it is clear that $\mathfrak p$ is a maximal ideal in $A$. It is the unique maximal ideal in $A$ for if $\mathfrak a$ is any other ideal of $A$ not contained in $\mathfrak p$, it would have to contain an element $a\in A-\mathfrak p = U$. Being a unit $a$ generates all of $A$ showing that $\mathfrak a = A$. 

The field 
$$\overline{\mathbb F}: = A/\mathfrak p,$$
is called the {\em residue class field} of $\mathbb F$ at $v$. The quotient map $A\to \overline{\mathbb F}$ shall be written as 
$$A\ni a\mapsto \bar a = a+\mathfrak p \in \overline{\mathbb F}.$$
\vskip2mm
Every valuation $v$ on $\mathbb F$ induces a group homomorphism $v:(\mathbb F^\ast, \cdot) \to (\mathbb R^+, \cdot)$ (with $\mathbb F^\ast = \mathbb F-\{0\}$, as usual). The image of this homomorphism is either dense in $\mathbb R^+$ or a subgroup of $(\mathbb R^+,\cdot)$ isomorphic to $\mathbb Z$ (cf. \cite{OMeara}, Page 37). In the latter case we call $v$ a {\em discrete valuation on $\mathbb F$}, and we refer to $(\mathbb F, v)$ as a {\em discretely valuated field}. If additionally $(\mathbb F, d_v)$ is complete, we call $(\mathbb F, v)$ a {\em complete discretely valuated field}. If $v$ is discrete, it is automatically non-Archimedean. 

Let $v$ be a discrete valuation on $\mathbb F$ and let $\pi\in \mathfrak p$ be an element maximizing the value of $v|_{\mathfrak p}$ (a definition that would be ill posed if $v$ were not discrete). Any such element $\pi$ is called a {\em uniformizer of $A$}. Uniformizers are unique up to multiplication by units of $A$. Indeed, if $\pi'$ is another uniformizer of $A$, then $v(\pi) = v(\pi')$ showing that $v(\pi'\cdot \pi^{-1}) = 1$. It follows that $\pi'=u\pi$ for some unit $u\in U$. 

If $a\in \mathfrak p$ is any element, then there exists some $n\in \mathbb N$ such that $v(\pi^n) = v(a)$, showing that $a=u\pi^n$ for some unit $u\in U$. This show that $\mathfrak p = (\pi)$, i.e. $\mathfrak p$ is a principal ideal generated by any uniformizer $\pi$. The integer $n$, called the {\em $\mathfrak p$-adic order of $a$} and denoted by $ord_p(a)$, can be explicitly calculated using any uniformizer $\pi$ of $\mathfrak p$, as $ord_p(a) = n=\ln v(a) / \ln v(\pi)$. 
%%%
%%%
\begin{theorem} [\cite{Gerstein}, Page 64] \label{TheoremGersteinOnCanonicalPowerSeriesRepresentation}
Let $(\mathbb F, v)$ be a discretely valuated field, let $\pi$ be a uniformizer of $A$, and let $C$ be a representative set of the residue class field $\overline{\mathbb F}$ (the selection of which may require the use of the Axiom of Choice). Then 
%%%
\begin{itemize}
\item[(a)] Every $a\in \mathbb F^\ast$ has a unique expression $a=\sum _{i=\nu} ^\infty c_i \pi^i$ with $c_i \in C$ and $\nu = ord_p(a)$. 
\item[(b)] If $(\mathbb F, v)$ is a complete discretely valuated field, then every such power series converges in $\mathbb F$. 
\end{itemize}
%%%
The power series from (a) is called the ``canonical power series representation of $a$". 
\end{theorem}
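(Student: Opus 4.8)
The plan is to establish (a) by a greedy successive-approximation algorithm, to prove uniqueness of the coefficients via the ultrametric inequality, and to obtain (b) by recognizing the relevant partial sums as a Cauchy sequence. Throughout one uses that $v(\pi)<1$ (since $\pi\in\mathfrak p$) and, for (b), that $(\mathbb F, d_v)$ is complete; it is also convenient to take $0\in C$ to represent the zero class of $\overline{\mathbb F}$.

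\emph{Existence in (a).} Given $a\in\mathbb F^\ast$ with $\nu=ord_p(a)$, build the $c_i\in C$ recursively so that the remainders $r_k:=a-\sum_{i=\nu}^{\nu+k-1}c_i\pi^i$ satisfy $ord_p(r_k)\ge \nu+k$. This holds for $k=0$ by hypothesis. Given it for $k$, the element $r_k\pi^{-(\nu+k)}$ lies in $A$ (because $v(r_k)\le v(\pi)^{\nu+k}$), so it has a residue class in $\overline{\mathbb F}$; let $c_{\nu+k}\in C$ be the corresponding representative. Then $r_k\pi^{-(\nu+k)}-c_{\nu+k}\in\mathfrak p=(\pi)$, hence $r_{k+1}=r_k-c_{\nu+k}\pi^{\nu+k}\in(\pi^{\nu+k+1})$, which is the claim for $k+1$. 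Since $v(r_k)\le v(\pi)^{\nu+k}\to 0$, the partial sums converge to $a$ in $(\mathbb F,d_v)$, giving the asserted power series (with $c_i=0$ for large $i$ if $a$ happens to have a finite expansion). Finally $c_\nu\ne 0$ automatically, for $c_\nu=0$ would force $v(a)\le v(\pi)^{\nu+1}$, contradicting $ord_p(a)=\nu$.

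\emph{Uniqueness in (a), and part (b).} Suppose two such series represent the same element but $c_j\ne c_j'$ for some index; take $j$ minimal. Then $(c_j-c_j')\pi^j=-\sum_{i>j}(c_i-c_i')\pi^i$ in $\mathbb F$. Distinct elements of the representative set $C$ lie in distinct cosets of $\mathfrak p$, so $c_j-c_j'$ is a nonzero element of $A$ outside $\mathfrak p$, i.e.\ a unit; thus the left-hand side has valuation $v(\pi)^j$, whereas the right-hand side, by the ultrametric inequality together with $v(c_i-c_i')\le 1$, has valuation at most $v(\pi)^{j+1}<v(\pi)^j$ --- a contradiction, so the coefficients agree. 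For (b), given any sequence $(c_i)_{i\ge\nu}$ in $C$, the ultrametric inequality gives $d_v(S_M,S_N)\le v(\pi)^{N+1}$ for the partial sums $S_N=\sum_{i=\nu}^N c_i\pi^i$ and $M>N$, so $(S_N)$ is Cauchy and converges by completeness.

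\emph{Main difficulty.} There is no deep obstacle here --- this is the $p$-adic digit expansion transplanted to an abstract discretely valuated field --- but the step that genuinely uses the hypotheses is the uniqueness argument: one must know that the difference of two distinct representatives from $C$ is a \emph{unit} of $A$, not merely a nonzero element of $\mathbb F$, for only then does the ultrametric estimate on the tail $\sum_{i>j}(c_i-c_i')\pi^i$ produce the contradiction. This is where both the discreteness of $v$ (ensuring a uniformizer $\pi$ with $v(\pi)<1$ generating the value group) and the defining property of $C$ as a set of coset representatives enter.
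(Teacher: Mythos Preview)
The paper does not include its own proof of this theorem; it is stated with a citation to Gerstein and immediately followed by an example. Your argument is correct and is precisely the standard successive-approximation proof one finds in the cited reference: build the digits $c_i$ one at a time by reducing $r_k\pi^{-(\nu+k)}$ modulo $\mathfrak p$, observe that the remainders shrink because $v(\pi)<1$, deduce uniqueness from the fact that distinct elements of $C$ differ by a unit, and handle (b) via the Cauchy criterion. There is nothing to compare against here, and no gap in what you wrote.
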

%%%
%%%
\begin{example} \label{ExampleCanonicalPowerSeriesRepresentationForThePAdicNumbers}
For each prime $p$, $(\mathbb Q, v_p)$ is a discretely valuated field and $\pi = p$ is a uniformizer for $\mathfrak p$. Let as in Example \ref{ExampleFirstIntroductionOfThePAdicNumbers}, $(\mathbb Q_p, w_p)$ be its unique completion. Then the residue class field of $\mathbb Q_p$ is $\mathbb F_p$, allowing us to pick $C=\{0,\dots, p-1\}$. Then every element of $a \in \mathbb Q_p^\ast$ has a unique power series representation
$$ a=\sum _{i={ord_p(a)}}^\infty c_i \,p^i$$
with $c_i\in \{0,\dots, p-1\}$, and each such power series converges in $\mathbb Q_p$. 
\end{example}
%%%%%%%%%%%%%%%%%%%%%%%%%%%%%%%%%%%%%%%%
%%%%%%%%%%%%%%%%%%%%%%%%%%%%%%%%%%%%%%%%
%%%%%%%%%%%%%%%%%%%%%%%%%%%%%%%%%%%%%%%%
%%%%%%%%%%%%%%%%%%%%%%%%%%%%%%%%%%%%%%%%
\subsection{Bilinear Forms}  \label{SectionOnBilinearForms}
%%%
%%%
This section introduces the theory of symmetric bilinear forms over fields. The theory is sensitive to whether the field is nondyadic (of characteristic not equal to 2) or dyadic (characteristic equal to 2). The latter, being the more difficult case, is treated separately in Section \ref{SectionForAnisotropyOverQ2}. The two main goals of this section are to understand when a bilinear form over $\mathbb Q$ is isotropic (Corollary \ref{HasseMinkovskiOverQ}), and to fully understand the Witt ring $W(\mathbb Q)$ in terms of the Witt rings $W(\mathbb F_p)$ (Theorem \ref{TheoremSplitExactSequenceForWittGroupOfRationals}).      
%%%%%%%%%%%
%%%%%%%%%%%
\subsubsection{Bilinear Forms over Fields} \label{SectionQuadraticFormsOverFields}
%%%
Let $\mathbb F$ be a field. A {\em bilinear form $B$ over $\mathbb F$} is a pair  $(V,B)$ consisting of a finite dimensional $\mathbb F$-vector space $V$ and a bilinear function $B:V\times V \to \mathbb F$ such that $B(x,y) = B(y,x)$ for all $x, y \in V$. The {\em associated quadratic form $q_B$} is the function $q_B:V\to \mathbb F$ defined by $q_B(x) = B(x,x)$. When char$(\mathbb F)\ne 2$, $B$ can be recovered from $q_B$ as 
$$B(x,y) = \textstyle \frac{1}{2} (q_B(x+y) - q_B(x) -q_B(y)).$$
Classically, quadratic forms over a field $\mathbb F$ were defined as homogeneous, degree 2 polynomials in $n\ge 1$ variables and with coefficients in $\mathbb F$. When char$(\mathbb F)\ne 2$, any such polynomial $f(x_1,\dots, x_n)$ can be written as 
$$f(x_1, \dots, x_n) = \sum _{i,j = 1}^n a_{i,j} \, x_i\,  x_j$$
with $a_{i,j}\in \mathbb F$ and with $a_{i,j} = a_{j,i}$ (if this condition is not met at first, simply replace $a_{i,j}$ with $\frac{1}{2}(a_{i,j}+a_{j,i})$). Then the matrix $A=[a_{i,j}]$ is a symmetric $n\times n$ matrix, and can be viewed as defining the symmetric bilinear form $B$ on $V=\mathbb F^n$, given by $B(x,y) = \langle Ax, y\rangle$ (where $Ax$ refers to matrix multiplication and $\langle \cdot, \cdot \rangle$ the standard scalar product on $\mathbb F^n$). It is easy to check that $q_B(x) = f(x_1, \dots , x_n)$, underscoring that the one-to-one correspondence between symmetric bilinear forms and quadratic forms. This correspondence breaks down in the case of dyadic fields.
%%%
\begin{remark}
All bilinear forms appearing below will always be assumed to by symmetric. We shall thus refer to them simply as {\em bilinear forms} or just {\em forms} for simplicity. We will typically write $B$ to refer to the bilinear form $(V,B)$. 
\end{remark}
%%%
The bilinear form $B$ is called {\em non-degenerate} or {\em regular} if whenever $B(x,y) = 0$ for all $y\in V$, it follows that $x=0$. A form that is not regular is called {\em degenerate} or {\em singular}. Going forward we shall exclusively be interested in non-degenerate bilinear forms.  For a non-degenerate form $(V,B)$, we define its rank as rank$(B) = \dim_{\mathbb F}V$. A bilinear form $(V,B)$ may be represented by a symmetric matrix $[b_{i,j}]$ with respect to a choice of a basis of $\{e_1, \dots, e_n\}$ of $V$, by setting $b_{i,j} = B(e_i,e_j)$. Then $B$ is non-degenerate if and only if $\det [b_{i,j}]\ne 0 $. 

The {\em discriminant of B}, denoted $dB$, if the coset $\det [b_{i,j}] \cdot \mathbb (F^\ast)^2$ in $\mathbb F^\ast/(\mathbb F^\ast)^2$. It is well defined since if we pick a different basis for $V$ and use it to represent $B$ by its associated matrix $[b'_{i,j}]$, then $[b'_{i,j}] = A\cdot [b_{i,j}]\cdot A^\tau$ for some invertible matrix $A$. Clearly then $\det [b'_{i,j}] = \det [b_{i,j}]\cdot (\det A)^2$. By way of notational shortcut, we shall typically write $dB=a$ rather than $dB = a\cdot (\mathbb F^\ast)^2$. 

A bilinear form $B$ is called {\em isotropic} if there exists a nonzero vector $x\in V$ with $B(x,x)=0$, in which case $x$ itself is called an {\em isotropic vector} for $B$. If $B$ is not isotropic, we call it {\em anisotropic}. A form $B$ is called {\em alternating} if $B(x,x)=0$ for all $x\in V$. Clearly alternating forms are isotropic. 
%%%
%%%
\begin{example} \label{ExampleHyperbolicPlane}
For any field $\mathbb F$, the {\em hyperbolic form} $\mathbb H = \mathbb H(\mathbb F)$ is the symmetric bilinear rank 2 form defined as 
$$\mathbb H((x_1, y_1), (x_2,y_2)) = x_1y_2+ x_2y_1.$$ 
$\mathbb H$ is alternating when char$(\mathbb F)=2$.
\end{example} 

Given a bilinear form $B:V\times V \to \mathbb F$, a subspace $W\subset V$ is called {\em totally isotropic} (with respect to $B$) if $B(x,y) = 0$  for all $x, y \in W$. A bilinear form $(V,B)$ is called {\em metabolic} if $dim_\mathbb F V = 2k$ for some $k\ge 0$, and $V$ possesses a totally isotropic subspace $W$ of dimension $k$. It is not hard to see that for a non-degenerate bilinear form $B$ of rank $2k$, a totally isotropic subspace of $V$ can have dimension at most $k$. Thus, a metabolic form $(V,B)$ is one admitting a maximal size totally isotropic subspace. As we shall see later, these two types of ``extremal" bilinear forms, namely anisotropic ones (possessing no nontrivial totally isotropic subspaces) and metabolic ones (possessing maximal size totally isotropic subspaces) , generate all bilinear forms by means of direct sums. 

Two bilinear forms $(V_1,B_1)$ and $(V_2,B_2)$ are called {\em isomorphic}, and we write $B_1 \cong B_2$, if there exists a vector space isomorphism $\varphi:V_1\to V_2$ such that $B_2(\varphi (x), \varphi(y)) = B_1(x,y)$ for all $x, y \in V_1$. For the most part, we shall not distinguish between isomorphic forms. 

A bilinear form $B$ is said to {\em represent $\alpha \in \mathbb F$} if there exists a vector $x\in V$ such that $B(x,x) = \alpha$. A bilinear form is called {\em universal} if it represents every $\alpha \in \mathbb F$. 

For $a\in \mathbb F$ we let the bilinear form $(\mathbb F, \langle a\rangle)$ (which we denote $\langle a \rangle$ for short) be defined by with $\langle a \rangle (x,y) = axy$. Note that  the  forms $(\mathbb F, \langle \lambda ^2\cdot a\rangle)$ and $(\mathbb F, \langle a \rangle)$ are isomorphic for any choice of $\lambda \in \mathbb F^\ast$, the isomorphism being $\varphi_\lambda :\mathbb F\to \mathbb F$ given by $\varphi_\lambda(x) = \lambda \cdot x$.  

The {\em (direct or orthogonal) sum} of a pair of bilinear forms $(V_1, B_1)$ and $(V_2, B_2)$ is the bilinear form $(V_1\oplus V_2, B_1\oplus B_2)$ with $B_1\oplus B_2((x_1,y_1),( x_2,y_2)) = B_1(x_1,y_1)+B_2(x_2,y_2)$. It is easy to check that $B_1\oplus B_2$ is non-degenerate if and only if $B_1$ and $B_2$ are non-degenerate. Similarly, the {\em product} of $(V_1, B_1)$ and $(V_2, B_2)$ is the bilinear form $(V_1\otimes_{\mathbb F} V_2, B_1\otimes B_2)$ with $B_1\otimes B_2(\sum_i x_i\otimes y_i, \sum_j x'_j\otimes y'_j) = \sum _{i,j} B_1(x_i,x'_j)\cdot B_2(y_i,y'_j)$.

Of great importance in the study of bilinear forms is the fact that for any non-degenerate bilinear form $B$ is a direct sum of 1-dimensional forms. 
%%%
%%%
\begin{theorem}[\cite{Lam}, Page 7 and \cite{Szymiczek}, Page 69] \label{TheoremDiagonalizabilityOfQuadraticForms}
Let $\mathbb F$ be a field and let $B:V\times V\to \mathbb F$ be a non-degenerate bilinear form of rank $n$. If char$(\mathbb F)\ne 2$ there exist elements $a_1, \dots, a_n\in \mathbb F^\ast$ such that 
\begin{equation} \label{EquationDiagonalizabilityOfForms}
B \cong  \langle a_1 \rangle \oplus \dots \oplus \langle a_n\rangle. 
\end{equation}
The same is true if char$(\mathbb F)=2$, provided $B$ is not an alternating form.
\end{theorem}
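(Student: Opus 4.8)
The plan is to argue by induction on the rank $n=\dim_{\mathbb F}V$. The cases $n=0$ (vacuous) and $n=1$ (where $B=\langle a_1\rangle$ with $a_1=B(e_1,e_1)\neq 0$ for any $e_1\neq 0$, this being nonzero because $B$ is non-degenerate) are immediate, so I would fix $n\geq 2$ and assume the statement for all forms of rank less than $n$.

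The inductive step amounts to splitting off a non-degenerate one-dimensional summand. First I would produce a vector $e_1$ with $a_1:=B(e_1,e_1)=q_B(e_1)\neq 0$. If $\mathrm{char}(\mathbb F)\neq 2$ such a vector exists, since otherwise the polarization formula $B(x,y)=\tfrac12(q_B(x+y)-q_B(x)-q_B(y))$ would force $B\equiv 0$, contradicting non-degeneracy on $V\neq 0$; if $\mathrm{char}(\mathbb F)=2$, the existence of $e_1$ is exactly the hypothesis that $B$ is not alternating. Given $e_1$, the linear functional $\varphi(x)=B(x,e_1)$ is nonzero (as $\varphi(e_1)=a_1\neq 0$), so $W:=e_1^{\perp}=\ker\varphi$ has dimension $n-1$ and misses $e_1$; thus $V=\langle e_1\rangle\oplus W$, the sum is orthogonal by symmetry of $B$, and $B|_W$ is again non-degenerate (if $x\in W$ pairs to $0$ with all of $W$, then, pairing to $0$ with $e_1$ too, it pairs to $0$ with all of $V$, so $x=0$). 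Applying the inductive hypothesis to the rank-$(n-1)$ form $B|_W$ yields $B\cong\langle a_1\rangle\oplus\cdots\oplus\langle a_n\rangle$.

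The only real obstacle, and the step I expect to require the most care, is ensuring in the dyadic case that $B|_W$ is again non-alternating, so that the inductive hypothesis applies to it. When $n=2$ this is automatic: $B|_W$ is a non-degenerate rank-one form $\langle b\rangle$ with $b\neq 0$, hence not alternating. When $n\geq 3$ I would show that $e_1$ can be chosen so that $B|_{e_1^{\perp}}$ is non-alternating. Put $V_0=\{x\in V: q_B(x)=0\}$, which for $\mathrm{char}(\mathbb F)=2$ is an $\mathbb F$-subspace (the cross terms $2B(x,y)$ vanish) with $V_0\neq V$, and note $\langle e_1\rangle\cap V_0=\{0\}$. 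If for our chosen $e_1$ the form $B|_{e_1^{\perp}}$ were alternating, then $e_1^{\perp}\subseteq V_0$, forcing $e_1^{\perp}=V_0$ by a dimension count, and hence (since $B$ is non-degenerate, so $(\cdot)^{\perp\perp}$ is the identity on subspaces) $V_0^{\perp}=\langle e_1\rangle$. Now pick any nonzero $v\in V_0$ (possible as $\dim V_0=n-1\geq 2$); then $e_1':=e_1+v$ still has $q_B(e_1')=q_B(e_1)\neq 0$ but $e_1'\notin\langle e_1\rangle$, so $e_1'^{\perp}\neq V_0$, and as these are hyperplanes of the same dimension $e_1'^{\perp}\not\subseteq V_0$; hence $B|_{e_1'^{\perp}}$ is non-alternating, and replacing $e_1$ by $e_1'$ lets the induction proceed exactly as above. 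This settles the characteristic-$2$ case without invoking any structure theory of alternating forms.
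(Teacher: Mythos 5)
Your proof is correct. Note that the paper itself does not prove this statement --- it is quoted as background with references to Lam and Szymiczek --- so the comparison is with the standard textbook arguments: your induction (find an anisotropic vector $e_1$, split $V=\langle e_1\rangle\perp e_1^{\perp}$, recurse) is exactly the classical proof in characteristic $\ne 2$, and your treatment of the dyadic case is a clean, self-contained variant. The cited sources typically dispose of the characteristic-$2$ obstruction by structure theory, e.g.\ by showing that an anisotropic line absorbs an alternating (metabolic) plane, $\langle a\rangle\perp\mathbb H\cong\langle a\rangle\perp\langle a\rangle\perp\langle a\rangle$; you instead perturb the splitting vector, replacing $e_1$ by $e_1+v$ with $v\in V_0=e_1^{\perp}$, which avoids any analysis of alternating forms. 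The delicate points all check out: in characteristic $2$ the set $V_0=\{q_B=0\}$ is indeed a subspace (additivity from the vanishing cross terms, scalar closure from $q_B(ax)=a^2q_B(x)$, which you might state explicitly); if $B|_{e_1^{\perp}}$ is alternating then $V_0=e_1^{\perp}$ is a hyperplane with $V_0^{\perp}=\langle e_1\rangle$; and since $e_1'=e_1+v\notin\langle e_1\rangle$ while $q_B(e_1')=q_B(e_1)\ne 0$, the hyperplane $e_1'^{\perp}$ differs from $V_0$, hence contains an anisotropic vector, so the inductive hypothesis applies to $B|_{e_1'^{\perp}}$. The base cases and the nondegeneracy of the restricted form are handled correctly as well.
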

This theorem is false for alternating forms over dyadic fields. For instance, the hyperbolic form $B=\mathbb H(\mathbb F_2)$ from Example \ref{ExampleHyperbolicPlane} cannot be diagonalized. 
%%%
%%%
\begin{corollary} \label{CorollaryOnDiagonalizabilityOfAnisotropicForms}
Anisotropic symmetric bilinear forms $B$ over any field satisfy \eqref{EquationDiagonalizabilityOfForms}. 
\end{corollary}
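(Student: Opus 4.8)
The plan is to deduce this directly from Theorem \ref{TheoremDiagonalizabilityOfQuadraticForms} by ruling out the one exceptional case appearing in its statement. Theorem \ref{TheoremDiagonalizabilityOfQuadraticForms} asserts that any non-degenerate symmetric bilinear form $B:V\times V\to\mathbb F$ admits a diagonalization as in \eqref{EquationDiagonalizabilityOfForms}, with the sole exception of alternating forms over a field of characteristic $2$. So it suffices to observe that an anisotropic form can never be alternating.

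This is immediate from the definitions: a form $B$ is alternating if $B(x,x)=0$ for all $x\in V$, whereas $B$ is anisotropic if $B(x,x)\neq 0$ for every nonzero $x\in V$. If $V=\{0\}$ both conditions hold vacuously, but then $B$ is the zero form of rank $0$, which is trivially diagonalized by the empty orthogonal sum (so \eqref{EquationDiagonalizabilityOfForms} holds with $n=0$). If $\dim_{\mathbb F}V\geq 1$, pick any nonzero $x\in V$; anisotropy gives $B(x,x)\neq 0$, so $B$ is not alternating. In either case $B$ falls outside the excluded class, and Theorem \ref{TheoremDiagonalizabilityOfQuadraticForms} yields elements $a_1,\dots,a_n\in\mathbb F^\ast$ with $B\cong\langle a_1\rangle\oplus\dots\oplus\langle a_n\rangle$, which is exactly \eqref{EquationDiagonalizabilityOfForms}.

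There is essentially no obstacle here; the only point requiring a word of care is the degenerate edge case $\dim V=0$, and even the hypothesis that $B$ be non-degenerate is automatic once $B$ is anisotropic (an isotropic vector in the radical would witness degeneracy, but anisotropy forbids isotropic vectors outright). Thus the corollary is a one-line consequence of the theorem together with the observation that anisotropy and the alternating property are mutually exclusive on any nonzero space.
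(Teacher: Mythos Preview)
Your proposal is correct and matches the paper's approach: the paper states this as an immediate corollary with no explicit proof, relying on the earlier remark that alternating forms are isotropic, so an anisotropic form is never alternating and Theorem~\ref{TheoremDiagonalizabilityOfQuadraticForms} applies directly.
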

The next proposition gives a characterization of the representability of  $\alpha \in \mathbb F$ by a bilinear form $B$, a proof of which can be found in \cite{Lam}, Page 11. 
%%%
\begin{proposition}
A non-degenerate bilinear form $B:V\times V\to \mathbb F$ represents $\alpha \in \mathbb F$ if and only if $B\oplus \langle -\alpha\rangle$ is isotropic. 
\end{proposition}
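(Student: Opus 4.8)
The plan is to prove the two implications separately; the forward direction is a one-line construction, and essentially all of the content sits in the reverse direction. Throughout I take $\alpha \ne 0$ (for $\alpha = 0$ the form $\langle -\alpha\rangle = \langle 0\rangle$ is degenerate, and ``$B$ represents $0$'' is by definition the statement that $B$ is isotropic, so nothing needs to be said). For the forward implication, suppose $x\in V$ satisfies $B(x,x) = \alpha$; note $x\ne 0$ since $\alpha\ne 0$. Viewing $B\oplus\langle -\alpha\rangle$ as a form on $V\oplus \mathbb F$, the vector $(x,1)$ is nonzero and
$$
(B\oplus\langle -\alpha\rangle)\big((x,1),(x,1)\big) = B(x,x) - \alpha\cdot 1\cdot 1 = \alpha - \alpha = 0,
$$
so $B\oplus\langle -\alpha\rangle$ is isotropic.

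For the reverse implication, suppose $B\oplus\langle -\alpha\rangle$ is isotropic, witnessed by a nonzero vector $(x,c)\in V\oplus\mathbb F$ with $B(x,x) - \alpha c^2 = 0$. If $c\ne 0$, then the vector $c^{-1}x \in V$ satisfies $B(c^{-1}x, c^{-1}x) = c^{-2}B(x,x) = \alpha$, so $B$ represents $\alpha$ and we are done. If $c = 0$, then $x\ne 0$ and $B(x,x) = 0$, so $B$ itself is isotropic, and it suffices to prove the auxiliary claim that \emph{a non-degenerate isotropic bilinear form over $\mathbb F$ is universal}, i.e. represents every element of $\mathbb F$.

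The auxiliary claim is the one genuinely non-formal ingredient, and I expect it to be the main obstacle. The standard argument, valid over the non-dyadic fields relevant in this paper, is the hyperbolic splitting: given a nonzero isotropic vector $u$, non-degeneracy supplies $w$ with $B(u,w)\ne 0$; rescaling we may take $B(u,w) = 1$, and replacing $w$ by $w' = w - \tfrac12 B(w,w)\,u$ gives $B(w',w') = 0$ and $B(u,w') = 1$, so that $\mathrm{span}(u,w')$ is a copy of the hyperbolic form $\mathbb H$ and $V = \mathbb H \oplus \mathbb H^{\perp}$ orthogonally. Since $q_{\mathbb H}(au+bw') = 2ab$ sweeps out all of $\mathbb F$ as $a,b$ range over $\mathbb F$ (char $\ne 2$), the summand $\mathbb H$, and hence $B$, represents every element of $\mathbb F$; in particular $B$ represents $\alpha$. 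This completes the reverse implication. I would either invoke \cite{Lam}, Page 11, for the claim, or include the short splitting argument above; the edge cases (char $2$, $\alpha = 0$) I would flag explicitly as being outside the intended scope or as following immediately from the definitions.
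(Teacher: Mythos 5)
Your proof is correct and is essentially the standard argument: the paper gives no proof of this proposition, deferring to \cite{Lam}, Page 11, and the argument there is exactly yours (trivial forward direction; reverse direction by cases on the last coordinate of the isotropic vector, with the one substantive ingredient being that a non-degenerate isotropic form splits off a hyperbolic plane and is therefore universal in characteristic $\ne 2$). Your explicit handling of the edge cases $\alpha = 0$ and characteristic $2$ is appropriate and consistent with the conventions of the cited source.
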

%%%
%%%
\subsubsection{Bilinear Forms over Local Fields} \label{SectionQuadraticFormsOverLocalFields}
%%%
Let $(\mathbb F, v)$ be a complete discretely valuated field, let  $A$, $U$ and $\mathfrak p$ be as in \eqref{RingOfIntegers} and let $\pi$ be a uniformizer of $A$. Let $B$ be an $n$-dimensional non-degenerate bilinear form over $\mathbb F$ and assume that either char$(F)\ne 2$ or that $B$ is not alternating if char$(\mathbb F)=2$. Then there exist units $u_1, \dots, u_r, u_{r+1},\dots , u_n \in U$ such that 
\begin{equation} \label{QuadraticFormsOverCompleteDiscretelyValuatedFields}
B\cong \langle u_1\rangle \oplus \dots \oplus \langle u_r\rangle \oplus \langle u_{r+1}\pi\rangle \oplus \dots \oplus \langle u_n\pi\rangle.
\end{equation}
This follows from Theorem \ref{TheoremDiagonalizabilityOfQuadraticForms} and the facts:
%%%
$$
\begin{array}{rlcrl}
\mathbb F^* & = A^\ast \cup (A^\ast)^{-1}  & \qquad  \qquad \qquad & A& =U\cup \mathfrak p \cr
\langle a \rangle & \cong \langle a^{-1}\rangle     & & \mathfrak p & = (\pi) \cr
\langle u\pi^m\rangle&  \cong \left\langle u \pi^{m \,(\text{mod } 2)}\right\rangle & & &
\end{array}
$$
%%%
Thus in our study of non-degenerate quadratic forms over complete discretely valuated non-dyadic fields, it suffices to restrict attention to those of the form \eqref{QuadraticFormsOverCompleteDiscretelyValuatedFields}. The same is true over dyadic fields for non-alternating forms. 
%%%
%%%
\begin{proposition}[\cite{Lam}, Page 148] \label{PropositionAnisotropicForms}
Let $(\mathbb F, v)$ be a complete, discretely valuated field with char$(\mathbb F)\ne 2$, let $u_1,\dots, u_r, u_{r+1},\dots, u_n\in U$ be units and let $\pi \in \mathfrak p$ be a uniformizer.  
\begin{itemize}
\item[(a)] The bilinear form $\langle u_1\rangle \oplus \dots \oplus \langle u_r\rangle$ is anisotropic over $\mathbb F$ if and only if the bilinear form $\langle \bar u_1\rangle \oplus \dots \oplus \langle \bar u_r\rangle$ is anisotropic over $\overline{\mathbb F}$.  
\item[(b)] The bilinear form $\langle u_1\rangle \oplus \dots \oplus \langle u_r\rangle \oplus \langle \pi u_{r+1}\rangle \oplus \dots \oplus \langle \pi u_n\rangle$ is anisotropic over $\mathbb F$ if and only if the bilinear forms $\langle \bar u_1\rangle \oplus \dots \oplus \langle \bar u_r\rangle$ and $\langle \bar u_{r+1}\rangle \oplus \dots \oplus \langle \bar u_n\rangle$ are both anisotropic over $\overline{\mathbb F}$.  
\end{itemize}
\end{proposition}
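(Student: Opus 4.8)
The plan is to prove both parts by the same mechanism: translate (an)isotropy of a diagonal form over $\mathbb F$ into (an)isotropy of its reduction over the residue field $\overline{\mathbb F}$, using on one side the normalization ``clear denominators by a power of the uniformizer $\pi$'' followed by reduction modulo $\mathfrak p$, and on the other side Hensel's lemma in the guise that every element of $1+\mathfrak p$ is a square in $\mathbb F$. This square fact is where completeness enters, and it requires that $2\in U$, i.e.\ that the residue field is nondyadic; it is exactly this that fails over $\mathbb Q_2$, which is why that case is postponed. I will also use freely that isotropy of $\langle c_1\rangle\oplus\dots\oplus\langle c_k\rangle$ is unchanged if every $c_j$ is multiplied by a common nonzero scalar, and that appending extra coordinates to an isotropic form keeps it isotropic.

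For part (a) I would argue as follows. Suppose $\langle\bar u_1\rangle\oplus\dots\oplus\langle\bar u_r\rangle$ is isotropic over $\overline{\mathbb F}$, say $\sum_{i=1}^r\bar u_i\bar a_i^2=0$ with the $\bar a_i$ not all zero; after reindexing assume $\bar a_1\ne 0$ and lift each $\bar a_i$ to $a_i\in A$ with $a_1\in U$. Then $q:=\sum_{i=1}^r u_i a_i^2\in\mathfrak p$, so $-u_1^{-1}\sum_{i=2}^r u_i a_i^2 = a_1^2 - u_1^{-1}q = a_1^2\bigl(1-u_1^{-1}a_1^{-2}q\bigr)$; since $a_1,u_1\in U$ and $q\in\mathfrak p$, the parenthesized factor lies in $1+\mathfrak p$, hence equals $w^2$ for some $w$, and $(a_1 w,a_2,\dots,a_r)$ is a nonzero zero of $\langle u_1\rangle\oplus\dots\oplus\langle u_r\rangle$ over $\mathbb F$ (nonzero because $a_1w\in U$). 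Conversely, if $\sum_{i=1}^r u_i a_i^2=0$ with $a_i\in\mathbb F$ not all zero, choose $k\in\mathbb Z$ with $\pi^{-k}a_i\in A$ for all $i$ and $\pi^{-k}a_{i_0}\in U$ for some $i_0$ (possible since $v$ is discrete); replacing each $a_i$ by $\pi^{-k}a_i$ and reducing modulo $\mathfrak p$ gives $\sum_{i=1}^r\bar u_i\bar a_i^2=0$ with $\bar a_{i_0}\ne 0$, so the reduced form is isotropic.

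For part (b), the implication ``both reductions anisotropic $\Rightarrow$ the big form is anisotropic'' I would prove by contraposition. Take a nonzero zero $(a_1,\dots,a_n)$ of $B=\langle u_1\rangle\oplus\dots\oplus\langle u_r\rangle\oplus\langle\pi u_{r+1}\rangle\oplus\dots\oplus\langle\pi u_n\rangle$, scaled by a power of $\pi$ so all $a_j\in A$ and not all lie in $\mathfrak p$, giving $\sum_{i=1}^r u_i a_i^2+\pi\sum_{j=r+1}^n u_j a_j^2=0$. If some $a_i$ with $i\le r$ is a unit, reduce modulo $\mathfrak p$ (the $\pi$-term dies) to get $\langle\bar u_1\rangle\oplus\dots\oplus\langle\bar u_r\rangle$ isotropic; otherwise every $a_i$ with $i\le r$ lies in $\mathfrak p$, so write $a_i=\pi b_i$, divide the relation by $\pi$ to obtain $\sum_{j=r+1}^n u_j a_j^2+\pi\sum_{i=1}^r u_i b_i^2=0$, note that some $a_j$ with $j>r$ must now be a unit, and reduce modulo $\mathfrak p$ to get $\langle\bar u_{r+1}\rangle\oplus\dots\oplus\langle\bar u_n\rangle$ isotropic. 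For the converse: if $\langle\bar u_1\rangle\oplus\dots\oplus\langle\bar u_r\rangle$ is isotropic then part (a) makes $\langle u_1\rangle\oplus\dots\oplus\langle u_r\rangle$ isotropic over $\mathbb F$, hence $B$ is; and if $\langle\bar u_{r+1}\rangle\oplus\dots\oplus\langle\bar u_n\rangle$ is isotropic then part (a) makes $\langle u_{r+1}\rangle\oplus\dots\oplus\langle u_n\rangle$ isotropic, hence so is its scalar multiple $\langle\pi u_{r+1}\rangle\oplus\dots\oplus\langle\pi u_n\rangle$, and hence $B$.

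The hard part will be the lifting step in part (a): turning an approximate solution of $\sum u_i x_i^2=0$ (a solution modulo $\mathfrak p$) into an exact one. The clean route is to solve for one variable and recognize the required value as a unit in $1+\mathfrak p$, which is a square by Hensel's lemma; this is precisely where completeness and the nondyadic residue field are essential, and without char$\,\overline{\mathbb F}\ne 2$ the statement genuinely fails (for instance $\langle 1,1,1,1\rangle$ is isotropic over $\mathbb F_2$ but anisotropic over $\mathbb Q_2$). Everything else—the $\pi$-scaling normalization and the parity bookkeeping in part (b)—is routine, the only care needed being that the normalized vector is nonzero modulo $\mathfrak p$.
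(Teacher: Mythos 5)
Your proof is correct; note, though, that the paper does not prove this proposition at all---it simply cites \cite{Lam}, Page 148---so there is no in-paper argument to compare against, and what you have written is essentially the standard proof of Springer's theorem that the citation points to: reduce a $\pi$-normalized isotropic vector modulo $\mathfrak p$ in one direction, and in the other direction lift an isotropic vector of the residue form by solving for one variable and using completeness plus Hensel's lemma to extract a square root of an element of $1+\mathfrak p$, with the parity/case split on whether the unit block or the $\pi$-block carries a unit coordinate handling part (b). One point worth making explicit: the hypothesis you actually use (and the one under which the statement is true) is that the \emph{residue field} $\overline{\mathbb F}$ has characteristic different from $2$, since Hensel's lemma for $x^2-c$ needs $\bar 2\ne 0$; the proposition as stated in the paper only assumes $\mathrm{char}(\mathbb F)\ne 2$, which would formally include $\mathbb Q_2$, where the criterion fails---as the paper itself concedes in Section \ref{SectionForAnisotropyOverQ2}. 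So your hypothesis bookkeeping is the accurate one, and your counterexample $\langle 1\rangle\oplus\langle 1\rangle\oplus\langle 1\rangle\oplus\langle 1\rangle$ (isotropic over $\mathbb F_2$, anisotropic over $\mathbb Q_2$) correctly pinpoints why the dyadic residue case must be excluded. The only cosmetic flaw is that your closing paragraph announces the Hensel lifting as still to be done, when in fact your second paragraph already carries it out in full.
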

%%%
%%%
If $\mathbb F$ is additionally a {\em local field}, that is a complete discretely valued field $(\mathbb F, v)$ whose residue class field $\overline{\mathbb F}$ is finite, more can be said about the isotropy property of bilinear forms. Before proceeding, we note that the most prominent examples of local fields, and the only ones relevant to our discussion, are the $p$-adic fields $(\mathbb Q_p,w_p)$, with $p$ a prime, in which case $\overline{\mathbb Q}_p =\mathbb F_p$.

\begin{theorem}[\cite{Lam}, Page 152--158] \label{IsotropicFormsOfDimension5}
Let $\mathbb F$ be a local field (of any characteristic) and let $B$ be an $n$-dimensional non-degenerate bilinear form over $\mathbb F$. 
\begin{itemize}
\item[(a)] If $n \ge 5$ then $B$ is isotropic. 
\item[(b)] There exists a unique, up to isomorphism, anisotropic bilinear form $B$ over $\mathbb F$, namely
$$B= \langle 1\rangle \oplus \langle -u \rangle \oplus \langle -\pi \rangle \oplus \langle u \pi\rangle.$$
Here $u\in U$ and $\pi$ is a uniformizer of $A$. 
\end{itemize}
\end{theorem}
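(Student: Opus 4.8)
The plan is to follow the classical argument (as in \cite{Lam}): diagonalize, normalize to the shape \eqref{QuadraticFormsOverCompleteDiscretelyValuatedFields}, and then---at least when $\overline{\mathbb F}$ is non-dyadic---transfer the isotropy question down to the finite residue field, where it becomes elementary. Alternating forms are isotropic, so by Theorem~\ref{TheoremDiagonalizabilityOfQuadraticForms} we may assume $B$ is diagonal, and after rescaling each entry by a square we may bring it to the form $B\cong\langle u_1\rangle\oplus\cdots\oplus\langle u_r\rangle\oplus\langle\pi u_{r+1}\rangle\oplus\cdots\oplus\langle\pi u_n\rangle$ with all $u_i\in U$, as in \eqref{QuadraticFormsOverCompleteDiscretelyValuatedFields}. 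Assume $\overline{\mathbb F}=\mathbb F_q$ with $q$ odd (so $2\in U$), and record the two relevant facts over $\mathbb F_q$: every form of rank $\ge 3$ is isotropic (Chevalley--Warning: a quadratic form in $\ge 3$ variables over a finite field has a nontrivial zero), and a binary form $\langle a,b\rangle$ is anisotropic iff $-ab\notin(\mathbb F_q^\ast)^2$---so there is, up to scaling, exactly one anisotropic binary form over $\mathbb F_q$. Part (a) is then immediate: if $n\ge 5$ then $r\ge 3$ or $n-r\ge 3$, so one of the residue forms $\langle\bar u_1,\dots,\bar u_r\rangle$, $\langle\bar u_{r+1},\dots,\bar u_n\rangle$ has rank $\ge 3$ and is isotropic over $\mathbb F_q$; by Proposition~\ref{PropositionAnisotropicForms}---or directly by lifting an isotropic vector with a unit coordinate via Hensel's lemma, which is where $2\in U$ enters---$B$ is isotropic.

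For part (b), existence: set $B_0=\langle 1\rangle\oplus\langle -u\rangle\oplus\langle -\pi\rangle\oplus\langle u\pi\rangle=\langle 1,-u\rangle\oplus\pi\langle -1,u\rangle$ and choose $u\in U$ with $\bar u$ a non-square in $\mathbb F_q^\ast$; the residue forms $\langle 1,-\bar u\rangle$ and $\langle -1,\bar u\rangle$ each have $-ab=\bar u$ a non-square, hence are anisotropic, and so $B_0$ is anisotropic by Proposition~\ref{PropositionAnisotropicForms}(b). (Alternatively, a parity-of-valuation bookkeeping applied to a hypothetical zero $x^2-uy^2-\pi z^2+u\pi w^2=0$, normalized so that some coordinate is a unit, forces $\bar u$ to be a square in $\mathbb F_q$, a contradiction.) For uniqueness, let $B$ be anisotropic of the maximal rank $4$; by part (a) and the rank-$3$ fact above, its normal form must have $r=n-r=2$, i.e. $B\cong\langle u_1,u_2\rangle\oplus\pi\langle u_3,u_4\rangle$ with $\langle\bar u_1,\bar u_2\rangle$ and $\langle\bar u_3,\bar u_4\rangle$ anisotropic over $\mathbb F_q$ (Proposition~\ref{PropositionAnisotropicForms}(b) again). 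As the anisotropic binary form over $\mathbb F_q$ is unique up to isometry, each residue form is isometric---after absorbing a unit scalar into the uniformizer---to the corresponding residue form of $B_0$; invoking the classification of diagonal unit forms over a complete non-dyadic DVR by their reductions (Hensel lifting of the residue isometry; see \cite{Lam}) then gives $B\cong B_0$.

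Finally, the dyadic case---local fields of residue characteristic $2$, such as $\mathbb Q_2$---must be handled separately, and I expect it to be the main obstacle. There the passage to $\overline{\mathbb F}$ of Proposition~\ref{PropositionAnisotropicForms} breaks down (over $\mathbb Q_2$ the rank-$2$ form $\langle 1,1\rangle$ is anisotropic while its reduction is not), so one works directly over $\mathbb F$, keeping track of the quadratic form rather than only the bilinear form and contending with the larger group $\mathbb F^\ast/(\mathbb F^\ast)^2$ (of order $8$ for $\mathbb Q_2$). This is precisely the dyadic theory developed in Section~\ref{SectionForAnisotropyOverQ2}; see also \cite{Lam}, pp.\ 152--158.
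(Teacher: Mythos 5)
This statement is imported by the paper from \cite{Lam} (pp.~152--158) and is not proved there, so there is no in-paper argument to compare against; the question is only whether your sketch would stand on its own. For the non-dyadic case it essentially does: diagonalize, normalize to the shape \eqref{QuadraticFormsOverCompleteDiscretelyValuatedFields}, use that over $\mathbb F_q$ ($q$ odd) every form of rank $\ge 3$ is isotropic and the anisotropic binary form is unique up to isometry, and transfer up and down via Springer/Hensel (Proposition~\ref{PropositionAnisotropicForms}). Your pigeonhole step for (a), the choice of $u$ with $\bar u$ a non-square for existence in (b) (a hypothesis the paper's statement of (b) actually omits), and the uniqueness argument forcing $r=4-r=2$ and then lifting the residue isometries are all the standard route and are fine in outline.

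The genuine gap is the dyadic case, which you explicitly set aside --- and that is precisely the case this paper needs. Theorem~\ref{IsotropicFormsOfDimension5} is invoked for $\mathbb Q_2$: Section~\ref{SectionForAnisotropyOverQ2} begins by citing it to dispose of ranks $\ge 4$ over $\mathbb Q_2$, and Meyer's Theorem~\ref{TheoremMeyer} (hence Lemma~\ref{LemmaBoundOnWSFromAboveByThree}) needs isotropy of rank~$\ge 5$ indefinite forms at \emph{every} prime, including $p=2$. So deferring the dyadic case to ``the dyadic theory developed in Section~\ref{SectionForAnisotropyOverQ2}'' is circular: that section's criteria (Theorem~\ref{TheoremCharacterizingSmallRankFormsOverQp}, Hilbert/Hasse symbols) are stated downstream of, not independently of, the present theorem. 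A complete proof must argue over $\mathbb Q_2$ directly, e.g.\ via the eight square classes of $\mathbb Q_2^\ast$ and the local square theorem (a rank-$5$ form over $\mathbb Q_2$ contains a ternary unit-times-square-class subform that can be shown isotropic by a finite check), or via the Hilbert-symbol computation of the Hasse invariant as in \cite{Cassels, Gerstein}; uniqueness in (b) over $\mathbb Q_2$ is then the statement that the anisotropic quaternary form is the norm form of the unique quaternion division algebra, detected by $dB=1$, $S_2B=-(-1,-1)_2$. A smaller caveat: as stated for ``any characteristic,'' part (b) with a diagonal representative is really a statement about quadratic forms; for symmetric bilinear forms over a local field of characteristic $2$ the diagonal model and the rank-$4$ bound behave differently, so if you want the char-$2$ function-field case you must switch to the quadratic-form formulation (as Lam does), not only ``handle $\mathbb Q_2$ separately.'' Since the paper only ever uses the theorem for $\mathbb Q_p$, the cleanest fix is either to carry out the $\mathbb Q_2$ argument or simply to cite \cite{Lam} as the paper does.
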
 
%%%
%%%

At least for the case of local fields $\mathbb F$ with char$(\mathbb F)\ne 2$, Proposition \ref{PropositionAnisotropicForms}, with the help of Theorem \ref{IsotropicFormsOfDimension5}, reduces the question of whether a non-degenerate bilinear form $B$ over $\mathbb F$ is anisotropic, to the question whether certain induced bilinear forms are anisotropic over the residue class field $\overline{\mathbb F}$. When $\mathbb F = \mathbb Q_p$ then $\overline{\mathbb F} = \mathbb F_p$, putting the onus on characterizing the anisotropy of bilinear forms over $\mathbb F_p$.  Towards such a characterization, let $p$ be an odd prime and let $\beta \in \mathbb F_p$ be a non-square. Note that $\mathbb F^\ast_p = (\mathbb F^\ast_p)^2\sqcup \beta \cdot  (\mathbb F^\ast_p)^2$. In particular, if $B$ is a  non-degenerate bilinear form over $\mathbb F_p$ with either $p\ne 2$ or with $B$ not alternating, then there exist integers $m, n \ge 0$ such that $B\cong (n\cdot \langle 1 \rangle) \oplus (m\cdot \langle \beta\rangle)$. 
%%%
%%%
\begin{theorem} \label{FormsOverFiniteFields}
Let $p$ be an odd prime  and let $\beta \in \mathbb F_p$ be a fixed non-square. 
%%%
\begin{itemize}
\item[(a)] Every non-degenerate bilinear form of rank $n\ge 1$ over $\mathbb F_p$ is isomorphic to one of the two mutually non-isomorphic forms: 
$$\underbrace{\langle 1 \rangle  \oplus \dots \oplus \langle 1 \rangle}_{n \text{ summands}} \qquad \text{ or } \qquad  \underbrace{\langle 1 \rangle  \oplus \dots \oplus \langle 1 \rangle}_{n-1  \text{ summands}} \oplus \langle \beta\rangle. $$ 
\item[(b)] Every non-degenerate bilinear form over $\mathbb F_p$ of rank at least 3, is isotropic.
\item[(c)] Both  $\langle 1\rangle$ and $\langle \beta \rangle$  are anisotropic. 
\item[(d)] Of the two rank 2 forms $\langle 1 \rangle \oplus \langle 1 \rangle$ and $\langle 1 \rangle \oplus \langle \beta \rangle$, the form 
\begin{itemize}
\item[(i)] $\langle 1\rangle\oplus \langle 1\rangle$ is isotropic and  $\langle 1 \rangle \oplus \langle \beta \rangle $ is anisotropic if $p\equiv 1 \pmod 4$.
\item[(ii)]  $\langle 1\rangle\oplus \langle 1\rangle$ is anisotropic and  $\langle 1 \rangle \oplus \langle \beta \rangle $ is isotropic if $p\equiv 3 \pmod 4$.
\end{itemize}
\end{itemize} 
Note that if $p\equiv 3 \pmod 4$, $\beta$ can be taken to equal $-1$. 
%%%
\end{theorem}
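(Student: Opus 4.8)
\medskip

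The plan is to assemble everything from the structure of $\mathbb F_p^\ast$ modulo squares together with a pigeonhole-style counting argument for isotropy. For part (a), recall that for $p$ odd, $(\mathbb F_p^\ast)^2$ has index $2$ in $\mathbb F_p^\ast$, so $\mathbb F_p^\ast = (\mathbb F_p^\ast)^2 \sqcup \beta(\mathbb F_p^\ast)^2$. By Theorem \ref{TheoremDiagonalizabilityOfQuadraticForms} any non-degenerate form of rank $n$ is $\langle a_1\rangle \oplus \dots \oplus \langle a_n\rangle$, and since $\langle \lambda^2 a\rangle \cong \langle a\rangle$, each $\langle a_i\rangle$ is either $\langle 1\rangle$ or $\langle \beta\rangle$. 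So $B \cong (k\cdot\langle 1\rangle) \oplus (m\cdot\langle\beta\rangle)$ with $k+m = n$. To reduce to the two claimed normal forms I would show $\langle \beta\rangle \oplus \langle \beta\rangle \cong \langle 1\rangle \oplus \langle 1\rangle$: this follows because $\beta^2 \in (\mathbb F_p^\ast)^2$ is the discriminant of both sides, and a rank-$2$ form over $\mathbb F_p$ is determined up to isomorphism by its discriminant (two summands of $\langle\beta\rangle$ can be simultaneously rescaled). Iterating, any form with $m \ge 2$ collapses to one with $m \le 1$, giving exactly the forms $n\cdot\langle 1\rangle$ and $((n-1)\cdot\langle 1\rangle)\oplus\langle\beta\rangle$. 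That these two are non-isomorphic for each fixed $n$ is seen from the discriminant: $1$ versus $\beta$, which are distinct in $\mathbb F_p^\ast/(\mathbb F_p^\ast)^2$.

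\medskip

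For part (b), the key input is the classical fact that over $\mathbb F_p$ with $p$ odd, any non-degenerate form in $\ge 3$ variables represents every element of $\mathbb F_p$ — in particular $0$ nontrivially. I would prove the representability of $0$ by a rank-$3$ form $\langle a\rangle \oplus \langle b\rangle \oplus \langle c\rangle$ directly by counting: the sets $\{ax^2 : x\in\mathbb F_p\}$ and $\{-by^2 - cz^2 \text{ with at least one of }y,z\neq 0 \text{ adjusted}\}$ each have more than $p/2$ elements, so they intersect, yielding a nonzero solution of $ax^2 + by^2 + cz^2 = 0$. (Equivalently: $\{ax^2\}$ has $(p+1)/2$ elements, $\{-by^2 - c\}$ likewise, forcing a common value, then homogenize.) Forms of rank $\ge 3$ contain a rank-$3$ subform, hence are isotropic too.

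\medskip

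Part (c) is immediate: $\langle 1\rangle(x,x) = x^2 \ne 0$ and $\langle\beta\rangle(x,x) = \beta x^2 \ne 0$ for $x \ne 0$. For part (d), $\langle 1\rangle \oplus \langle\beta\rangle$ is isotropic iff $x^2 + \beta y^2 = 0$ has a nonzero solution iff $-\beta$ is a square; and $\langle 1\rangle \oplus \langle 1\rangle$ is isotropic iff $-1$ is a square. Now $-1 \in (\mathbb F_p^\ast)^2$ exactly when $p \equiv 1 \pmod 4$ (Euler's criterion, $(-1)^{(p-1)/2}$). When $p \equiv 1 \pmod 4$: $-1$ is a square so $\langle 1\rangle^{\oplus 2}$ is isotropic; and $-\beta = (-1)\beta$ is a non-square (square times non-square), so $\langle 1\rangle\oplus\langle\beta\rangle$ is anisotropic. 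When $p \equiv 3 \pmod 4$: $-1$ is a non-square so $\langle 1\rangle^{\oplus 2}$ is anisotropic; and $-\beta = (-1)\beta$ is a square (non-square times non-square), so $\langle 1\rangle\oplus\langle\beta\rangle$ is isotropic. The final remark is clear since $-1$ is itself a non-square when $p \equiv 3 \pmod 4$, so it is a legitimate choice of $\beta$.

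\medskip

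The main obstacle is really just part (b): everything else is bookkeeping with discriminants and Euler's criterion, but the counting argument producing a nontrivial zero of a ternary form needs to be stated carefully (handling the degenerate cases where some variable is forced to vanish, and ensuring the solution obtained is genuinely nonzero). Since this is a completely standard lemma I would either cite it from \cite{Lam} or \cite{Szymiczek} or include the two-line counting proof; no genuinely new difficulty arises.
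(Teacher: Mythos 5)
Your parts (c) and (d) are correct and essentially coincide with the paper's own argument (the paper proves (d) by exactly this analysis of whether $-1$, respectively $-\beta$, is a square); for (a) and (b) the paper simply cites O'Meara, so your self-contained treatment is a genuinely different, more elementary route. Your counting argument for (b) is fine: the sets $\{ax^2\}$ and $\{-by^2-c\}$ each have $(p+1)/2$ elements, hence meet, and the resulting solution of $ax^2+by^2+cz^2=0$ has $z=1\neq 0$, so it is nontrivial; diagonalizing and passing to a rank $3$ subform handles rank $\geq 3$.

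There is, however, a genuine gap in your proof of (a), at the key step $\langle\beta\rangle\oplus\langle\beta\rangle\cong\langle 1\rangle\oplus\langle 1\rangle$. The parenthetical justification ``two summands of $\langle\beta\rangle$ can be simultaneously rescaled'' does not work: rescaling the two basis vectors only replaces $\langle\beta\rangle\oplus\langle\beta\rangle$ by $\langle\lambda^2\beta\rangle\oplus\langle\mu^2\beta\rangle$, which is the same pair of square classes, so you never leave $\langle\beta\rangle\oplus\langle\beta\rangle$; an isomorphism onto $\langle 1\rangle\oplus\langle 1\rangle$ requires a base change genuinely mixing the two coordinates. The alternative justification you offer --- that a rank $2$ form over $\mathbb F_p$ is determined up to isomorphism by its discriminant --- is precisely the $n=2$ case of part (a), so invoking it here is circular. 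The repair is available from the very counting you use in (b): the sets $\{\beta x^2\}$ and $\{1-\beta y^2\}$ each have $(p+1)/2$ elements, hence intersect, so $\beta x^2+\beta y^2=1$ is solvable; since char$(\mathbb F_p)\neq 2$, a represented nonzero value splits off, giving $\langle\beta\rangle\oplus\langle\beta\rangle\cong\langle 1\rangle\oplus\langle d\rangle$ with $d$ in the square class of the discriminant $\beta^2$, i.e.\ $d$ a square, which yields the claimed isomorphism. With that lemma in place (equivalently: every non-degenerate binary form over $\mathbb F_p$ is universal), the rest of your (a) --- collapsing $m\geq 2$ copies of $\langle\beta\rangle$ and distinguishing the two normal forms by their discriminants --- goes through.
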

Parts (a) and (b) of this theorem can be found in \cite{OMeara}, pages 157--158, part (c) is trivial. For part (d), is suffices to examine the two rank 2 forms $\langle 1 \rangle \oplus \langle 1 \rangle$ and  $\langle 1 \rangle \oplus \langle \beta \rangle$. If $p\equiv 1\pmod 4$ then $-1\in (\mathbb F_p^\ast)^2$ so that $\langle 1\rangle \oplus \langle 1\rangle \cong \langle 1\rangle \oplus \langle -1\rangle$ and thus isotropic. If the form $\langle 1\rangle \oplus \langle \beta\rangle$ were isotropic, we could find an isotropic vector $x=(a,b)\in \mathbb F^2$, leading to $a^2+\beta b^2\equiv 0 \pmod p$. This shows that both $a$ and $b$ are non-zero, and thus that $\beta = -a^2b^{-2}$. Since $-1$ is a square in $\mathbb F_p$, this last equation equates $\beta $ with a square in $\mathbb F_p$, a contradiction. Thus $\langle 1 \rangle \oplus \langle \beta \rangle$ is anisotropic. The case of $p\equiv 3 \pmod 4$ follows along similar lines (the main difference being that $-1$ is a non-square in this case).  
%%%%%%%%%%%%%%%%%%%%%%%%%%%%%%%%%%%%%%%%
%%%%%%%%%%%%%%%%%%%%%%%%%%%%%%%%%%%%%%%%
\subsubsection{Anisotropy over $\mathbb Q_2$} \label{SectionForAnisotropyOverQ2}
%%%
Unfortunately Proposition \ref{PropositionAnisotropicForms} fails in the case of characteristic 2, leaving us without a test of anisotropy of non-degenerate bilinear forms over $\mathbb Q_2$. However, by Theorem \ref{IsotropicFormsOfDimension5} we know that any form of rank $n\ge 5$ over $\mathbb Q_2$ is isotropic, and that there is a unique non-degenerate anisotropic form or rank 4. It therefore only remains to understand  when a form $B$ over $\mathbb Q_2$ of rank 1, 2 or 3 is anisotropic. The goal of this section is to provide such criteria. Our outline follows that of \cite{Cassels, Gerstein}. 

Let $p$ be a prime and consider the field $\mathbb Q_p$ of the $p$-adic numbers. We shall rely on the canonical power series representation of elements $a\in \mathbb Q_p^\ast$ from Example \ref{ExampleCanonicalPowerSeriesRepresentationForThePAdicNumbers}, and write $a=\sum _{i=\nu}^\infty a_i \, p^i$ with $a_i\in \{0,\dots, p-1\}$, $\nu = \nu(a) = ord_p(a)$ and $a_\nu \ne 0$. Recall that a unit $u\in A$  has $\nu(u)=0$. Accordingly we can write $a=p^\nu u$ for some unit $u$, whose canonical power series representation is $u=\sum _{i=\nu}^\infty a_i \, p^{i-\nu}$. 

For a pair $a,b \in \mathbb Q_p^\ast$ we define its {\em Hilbert symbol} $(a,b)_p$ as 
$$(a,b)_p = \left\{
\begin{array}{rl}
1 & \quad ; \quad ax^2+by^2=1 \text{ is solvable in }\mathbb Q_p, \cr
-1 & \quad ; \quad ax^2+by^2=1 \text{ is not solvable in }\mathbb Q_p.
\end{array}
\right.$$  
For the case of $p=2$, the following makes the evaluation of the Hilbert symbol completely explicit (cf. \cite{Gerstein}, Page 84): Write $a=2^\nu \sum _{i=0}^\infty a_i \,2^i$  and $b=2^\mu \sum _{j=0}^\infty b_j\, 2^j$, then 
\begin{equation} \label{EquationTheHilbertSymbolFor2Adics}
(a,b)_2 = (-1)^{a_1b_1 + \nu\,  \omega (b) + \mu \, \omega (a)},
\end{equation}
with $\omega(a) = a_1+a_2 \pmod 2$ and $\omega(b) = b_1+b_2 \pmod 2$. Similar formulas exist for odd primes $p$ as well (\cite{Gerstein}, Page 83). 
%%%
%%%
\begin{example}
For $a=13$ and $b=24$, calculate $(a,b)_2$. Start by finding the canonical power series representation of $a$ and $b$:
$$a= 13 = 2^0( 1+0\cdot 2 + 1\cdot 2^2 + 1\cdot 2^3) \qquad \text{ and } \qquad b=24=2^3(1+1\cdot 2).$$
Accordingly $\nu=0$, $\mu = 3$, $a_1=0$, $a_2=1$, $b_1=1$ and $b_2=0$. Using these in \eqref{EquationTheHilbertSymbolFor2Adics} gives $(13, 24)_2 = (-1)^{0\cdot 1 + 0\cdot 1 + 3 \cdot 1} = -1$. Thus the equation $13x^2+24y^2=1$ is unsolvable in $\mathbb Q_2$. 
\end{example}
%%%
%%%
\begin{example}
Calcuate $(-1,-1)_2$. Note that
$$ -1 = \frac{1}{1-2} = \sum _{i=0}^\infty 1 \cdot2^i.$$
We can read off from this power series that $\nu=\mu = 0$ and $a_1=b_1=a_2=b_2=1$, leading to $(-1,-1)_2 = -1$.
\end{example}
Given a non-degenerate symmetric bilinear form $B=\langle a_1\rangle \oplus \dots \oplus \langle a_n\rangle$ over $\mathbb Q_p$, we define it {\em Hasse symbol} $S_pB$ as 
$$S_pB = \prod _{i<j} (a_i, a_j)_p.$$
The Hasse symbol of a form $B$ is independent of its chosen diagonalization (\cite{Gerstein}, Page 86), and can be explicitly calculated for $p=2$ using equation \eqref{EquationTheHilbertSymbolFor2Adics}. 
%%%
%%%
\begin{theorem} \label{TheoremCharacterizingSmallRankFormsOverQp}
Let $B$ be a non-degenerate form over $\mathbb Q_p$ of rank $r$. 
\begin{itemize}
\item[(i)] If $r=2$, then  $B=\langle a \rangle \oplus \langle b \rangle $, $a, b \in \mathbb Q_p^\ast$ is isotropic if and only if $-ab \in (\mathbb Q_2^\ast)^2$. 
\item[(ii)] If $r=3$, $B$ is isotropic if and only if $S_pB = (-1, -dB)_p$.
\item[(iii)] If $r=4$, $B$ is anisotropic if and only if $dB=1$ and $S_pB= -(-1,-1)_p$. 
\end{itemize}
%%%
\end{theorem}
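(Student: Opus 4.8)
The plan is to deduce all three parts from two facts already in hand: Theorem \ref{IsotropicFormsOfDimension5} (rank $\ge 5$ forces isotropy, and the anisotropic quaternary form over a local field is unique) together with the classification theorem for non-degenerate symmetric bilinear forms over $\mathbb Q_p$ --- such a form is determined up to isomorphism by the triple (rank, discriminant $dB$, Hasse symbol $S_pB$); see \cite{Lam, Gerstein, Cassels}. Everything else then reduces to bookkeeping with the Hilbert symbol, using that $(\,\cdot\,,\,\cdot\,)_p$ is a symmetric $\mathbb F_2$-bilinear pairing on $\mathbb Q_p^\ast/(\mathbb Q_p^\ast)^2$ and the identities $(a,-a)_p=1$ and $(a,1-a)_p=1$.

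Part (i) is elementary and holds over any field of characteristic $\ne 2$: for $B=\langle a\rangle\oplus\langle b\rangle$ an isotropic vector $(x,y)$ has both entries nonzero, and $ax^2=-by^2$ rearranges to $-ab=(ax/y)^2$; conversely, if $-ab=c^2$ then $(c,a)$ is isotropic. Hence $B$ is isotropic precisely when $-ab\in(\mathbb Q_p^\ast)^2$ (the ``$(\mathbb Q_2^\ast)^2$'' in the statement is a typo for ``$(\mathbb Q_p^\ast)^2$''). For part (ii) I would first use that an isotropic non-degenerate form splits off a hyperbolic plane: a rank-$3$ isotropic $B$ satisfies $B\cong\mathbb H\oplus\langle e\rangle$, and comparing discriminants (using $d\mathbb H=-1$) forces $e=-dB$; a short Hilbert-symbol computation then gives $S_pB=S_p(\mathbb H\oplus\langle -dB\rangle)=(-1,-dB)_p$. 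Conversely, if $S_pB=(-1,-dB)_p$ then $B$ and $\mathbb H\oplus\langle -dB\rangle$ agree in rank, discriminant, and Hasse symbol, so by the classification theorem they are isomorphic and $B$ is isotropic.

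For part (iii) I would run the same argument against the unique anisotropic quaternary form $B_0=\langle 1\rangle\oplus\langle -u\rangle\oplus\langle -\pi\rangle\oplus\langle u\pi\rangle$ of Theorem \ref{IsotropicFormsOfDimension5}(b): one checks that $dB_0=1$ (the product of the diagonal entries is $(u\pi)^2$) and, collapsing the six Hilbert symbols with the identities above, that $S_pB_0=-(-1,-1)_p$. Thus a rank-$4$ form $B$ is anisotropic iff $B\cong B_0$ iff $(dB,S_pB)=(1,-(-1,-1)_p)$. One can also bypass the classification theorem here: an isotropic rank-$4$ form is $\mathbb H\oplus B'$ with $B'$ binary, $dB=1$ forces $dB'=-1$, hence $B'\cong\mathbb H$ by part (i), so $B\cong\mathbb H\oplus\mathbb H$ with $S_pB=(-1,-1)_p\ne -(-1,-1)_p$; combined with the computation of $(dB_0,S_pB_0)$ this again characterizes anisotropy.

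The only real obstacle is computational: evaluating $S_pB_0=-(-1,-1)_p$ --- and confirming that $S_pB$ does not depend on the chosen diagonalization --- must be done uniformly in $p$, which for $p=2$ means unwinding the explicit formula \eqref{EquationTheHilbertSymbolFor2Adics}. I expect no conceptual difficulty beyond careful tracking of which elements of $\mathbb Q_p^\ast$ are squares.
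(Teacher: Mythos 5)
Your argument is correct, but be aware that the paper never proves this statement at all: it is quoted as background, following \cite{Cassels, Gerstein} (the same criteria appear in \cite{Lam}, Ch.~VI), so there is no ``paper proof'' to match --- what you have written is essentially the standard textbook derivation. Your part (i) is the elementary argument valid over any field of characteristic $\ne 2$, and you are right that $(\mathbb Q_2^\ast)^2$ in the statement is a typo for $(\mathbb Q_p^\ast)^2$. Parts (ii) and (iii) rest on one extra input beyond Theorem \ref{IsotropicFormsOfDimension5}, namely the classification of non-degenerate forms over $\mathbb Q_p$ by rank, discriminant and Hasse symbol; that is a legitimate citation (it is in all three references), and your alternative route in (iii) that avoids it for the ``if'' direction (isotropic plus $dB=1$ forces $B\cong \mathbb H\oplus\mathbb H$, whose Hasse symbol is $(-1,-1)_p$) is a nice touch. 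The one place I would tighten is the computation you flag as the ``only real obstacle'': it need not be done prime by prime via \eqref{EquationTheHilbertSymbolFor2Adics}. Writing $B_0=\langle 1\rangle\oplus\langle -u\rangle\oplus\langle-\pi\rangle\oplus\langle u\pi\rangle$ and collapsing the Hilbert symbols by bilinearity together with $(a,-a)_p=1$ and $(a,a)_p=(a,-1)_p$ gives uniformly $S_pB_0=(-1,-1)_p\,(u,\pi)_p$, so the claimed value $S_pB_0=-(-1,-1)_p$ is exactly the statement $(u,\pi)_p=-1$, i.e.\ that $B_0$ is the norm form of the quaternion \emph{division} algebra $(u,\pi)$ over $\mathbb Q_p$. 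This also exposes a point the paper's Theorem \ref{IsotropicFormsOfDimension5}(b) leaves implicit: not every unit $u$ works there (for $u=1$ the form is visibly isotropic), so in your forward direction of (iii) you should either invoke the correct choice of $u$ with $(u,\pi)_p=-1$, or simply argue that anisotropy of $B_0$ forces $(u,\pi)_p=-1$ via your own parts (i)--(ii), which closes the loop without any case analysis in $p$.
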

%%%
%%%
Determining whether an element of $\mathbb Q_p^\ast$ is a square, something required in Part (i) of the preceding theorem, is thankfully easy. We state the result for $p=2$ only, though similar results exist for odd primes also (\cite{Gerstein}, Page 76). Let $a=2^\nu \sum _{i=0}^\infty a_i \, 2^i \in \mathbb Q_2^\ast$, then 
$$a \in (\mathbb Q_2^\ast)^2 \text{ if and only if $\nu$ is even and } a_1=a_2=0.$$ 
For example $-1$ is not a square in $\mathbb Q_2$, but 17 is. 

\begin{corollary} \label{CorollaryAboutAnisotropyOverQ2OfCertainRank4Forms}
Let $p$ be a prime, $a, b \in \mathbb Q_p^\ast$ and let $B_{\pm 1}$ be the two forms $B_{\pm 1}=\langle a \rangle \oplus \langle b \rangle \oplus\langle \pm 1 \rangle \oplus\langle \pm 1 \rangle$. 
\begin{itemize}
\item[(i)] If $p$ is odd, then $B_1$ is anisotropic if and only if $B_{-1}$ is anisotropic. 
\item[(ii)] If $p=2$ then $B_1$ is anisotropic if and only if $B_{-1}$ is isotropic.  
\end{itemize}
\end{corollary}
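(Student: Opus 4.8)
\textbf{Proof proposal for Corollary \ref{CorollaryAboutAnisotropyOverQ2OfCertainRank4Forms}.}

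The plan is to reduce everything to the rank-4 classification in Part (iii) of Theorem \ref{TheoremCharacterizingSmallRankFormsOverQp}, by tracking how the discriminant and the Hasse symbol of $\langle a\rangle\oplus\langle b\rangle\oplus\langle\pm 1\rangle\oplus\langle\pm 1\rangle$ change when the sign of the last two entries is flipped. Write $d = dB_{+1}$ and $S = S_p B_{+1}$, and compute $dB_{-1}$ and $S_p B_{-1}$ in terms of these. For the discriminant, $dB_{\pm 1} = ab\cdot(\pm 1)^2 = ab$ as a class in $\mathbb Q_p^\ast/(\mathbb Q_p^\ast)^2$, so the discriminant is unchanged: $dB_{-1} = dB_{+1} = d$. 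For the Hasse symbol, I would use bilinearity/multiplicativity of the Hilbert symbol in each slot together with the product formula $S_pB = \prod_{i<j}(a_i,a_j)_p$. Expanding the product for the ordered diagonal entries $(a, b, \varepsilon, \varepsilon)$ with $\varepsilon = \pm 1$, one gets
\[
S_p B_\varepsilon = (a,b)_p \cdot (a,\varepsilon)_p^2 \cdot (b,\varepsilon)_p^2 \cdot (\varepsilon,\varepsilon)_p = (a,b)_p\cdot(\varepsilon,\varepsilon)_p,
\]
since $(x,\varepsilon)_p^2 = 1$. Hence $S_pB_{+1} = (a,b)_p\cdot(1,1)_p = (a,b)_p$ and $S_pB_{-1} = (a,b)_p\cdot(-1,-1)_p$, so $S_pB_{-1} = (-1,-1)_p\cdot S_pB_{+1}$.

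Now apply Part (iii) of Theorem \ref{TheoremCharacterizingSmallRankFormsOverQp}: $B_\varepsilon$ is anisotropic if and only if $dB_\varepsilon = 1$ in $\mathbb Q_p^\ast/(\mathbb Q_p^\ast)^2$ and $S_pB_\varepsilon = -(-1,-1)_p$. Since the discriminant condition $d = 1$ is the same for both signs, anisotropy of $B_\varepsilon$ is governed (given $d=1$) purely by whether $S_pB_\varepsilon = -(-1,-1)_p$. Using the relations above: $B_{+1}$ anisotropic $\iff d=1$ and $(a,b)_p = -(-1,-1)_p$; while $B_{-1}$ anisotropic $\iff d=1$ and $(a,b)_p\cdot(-1,-1)_p = -(-1,-1)_p$, i.e. $(a,b)_p = -(-1,-1)_p^2 = -1$. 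So the two criteria are: $B_{+1}$ anisotropic $\iff$ $d=1$ and $(a,b)_p = -(-1,-1)_p$; $B_{-1}$ anisotropic $\iff$ $d=1$ and $(a,b)_p = -1$.

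It remains to feed in the value of $(-1,-1)_p$. For $p$ odd, $(-1,-1)_p = 1$ (the conic $-x^2-y^2=1$ has a $\mathbb Q_p$-point for odd $p$; this follows from the Hilbert-symbol formulas for odd $p$ cited from \cite{Gerstein}, or from the fact that any rank-3 form over $\mathbb Q_p$ is isotropic and a short argument, or simply from Theorem \ref{IsotropicFormsOfDimension5} applied suitably). Then both criteria become ``$d=1$ and $(a,b)_p = -1$'', so $B_{+1}$ is anisotropic iff $B_{-1}$ is, giving (i). For $p=2$, we have $(-1,-1)_2 = -1$ (computed in the example preceding this corollary). Then $B_{+1}$ anisotropic $\iff$ $d=1$ and $(a,b)_2 = -(-1) = 1$, whereas $B_{-1}$ anisotropic $\iff$ $d=1$ and $(a,b)_2 = -1$. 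Note $d = ab$, so if $d=1$ then $ab$ is a square, hence $(a,b)_2$ depends only on... actually more cleanly: comparing the two, when $d=1$ the conditions $(a,b)_2 = 1$ and $(a,b)_2 = -1$ are mutually exclusive and exhaustive, so exactly one of $B_{+1}$, $B_{-1}$ is anisotropic when $d=1$; and when $d\neq 1$ neither is anisotropic, which in particular means $B_{-1}$ is isotropic whenever $B_{+1}$ is anisotropic. Hence $B_{+1}$ anisotropic $\iff$ $d = 1$ and $(a,b)_2 = 1$. I then need to check the reverse implication of (ii), namely that $B_{-1}$ isotropic $\implies$ $B_{+1}$ anisotropic is \emph{not} what's claimed — rereading, (ii) states $B_1$ anisotropic $\iff$ $B_{-1}$ isotropic, so I must verify both directions: ($\Rightarrow$) done above since $B_{+1}$ anisotropic forces $d=1$ and $(a,b)_2=1$, whence $B_{-1}$ has $(a,b)_2 = 1 \neq -1$ so $B_{-1}$ is isotropic; ($\Leftarrow$) if $B_{-1}$ is isotropic then either $d\neq 1$ or $(a,b)_2 \neq -1$ i.e. $(a,b)_2 = 1$ — but the first case makes $B_{+1}$ isotropic too, so (ii) as stated would be false in general! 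I expect the intended hypothesis is that $B_{+1}$ (equivalently $B_{-1}$) has discriminant $1$, or that the corollary is applied only in that situation; I would state this caveat explicitly, or restrict to $d=1$ where the clean dichotomy $S_2B_{-1} = -S_2B_{+1}$ combined with fixed $d=1$ yields exactly one of the two forms anisotropic. The main obstacle, then, is pinning down the precise discriminant hypothesis under which (ii) holds as stated, and otherwise the argument is a routine bookkeeping of Hilbert and Hasse symbols.
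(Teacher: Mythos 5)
Your computation is exactly the paper's: both arguments reduce to Theorem \ref{TheoremCharacterizingSmallRankFormsOverQp}(iii), note that $dB_1=dB_{-1}=ab$, compute $S_pB_1=(a,b)_p$ and $S_pB_{-1}=(-1,-1)_p\,(a,b)_p$, and then feed in $(-1,-1)_p=1$ for odd $p$ and $(-1,-1)_2=-1$. Part (i) is settled just as in the paper. Your caveat about part (ii) is well taken and is not a defect of your argument: the sign flip $S_2B_{-1}=-S_2B_1$ only shows that the two anisotropy criteria are mutually exclusive, i.e. $B_1$ anisotropic $\Rightarrow$ $B_{-1}$ isotropic (and symmetrically with the roles of $B_1$ and $B_{-1}$ exchanged), whereas the converse direction of the stated biconditional fails whenever $ab\notin(\mathbb Q_2^\ast)^2$, since then both forms are isotropic (e.g. $a=1$, $b=3$ over $\mathbb Q_2$). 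The paper's one-line proof ("the result now follows") glosses over exactly this point; the biconditional in (ii) holds precisely when the common discriminant $ab$ is a square in $\mathbb Q_2^\ast$, and that is the situation in which the corollary is actually invoked later (for $B_0=\langle -3\rangle\oplus\langle 5\rangle$ the discriminant of $B_0\oplus\langle\pm 1\rangle\oplus\langle\pm 1\rangle$ is $-15\equiv 1\pmod 8$, a square in $\mathbb Q_2$). So your symbol bookkeeping is correct and mirrors the intended proof, and your proposed fix (add the hypothesis $dB_{\pm 1}=1$, or restate (ii) as ``at most one of $B_{\pm 1}$ is anisotropic over $\mathbb Q_2$'') is the right way to make the statement literally true.
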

%%%
\begin{proof}
By part (iii) from Theorem \ref{TheoremCharacterizingSmallRankFormsOverQp}, it suffices to compare $S_pB_1$ to $S_pB_{-1}$ (clearly $dB_1 = dB_{-1}$). Observe that 
\begin{align*}
S_pB_1 & = (1,1)_p \cdot [(1,a)_p]^2 \cdot [(1,b)_p]^2 \cdot (a, b)_p = (a, b)_p\cr
S_pB_{-1} & = (-1,-1)_p \cdot [(-1,a)_p]^2 \cdot [(-1,b)_p]^2 \cdot (a, b)_p =   (-1,-1)_p \cdot (a, b)_p
\end{align*}
The result now follows from 
$$(-1, -1)_p = \left\{
\begin{array}{rl}
1 & \quad ; \quad p \text{ is odd,}\cr
-1 & \quad ; \quad p=2.
\end{array}
\right.$$
\end{proof}

%%%%%%%%%%%%%%%%%%%%%%%%%%%%%%%%%%%%%%%%
%%%%%%%%%%%%%%%%%%%%%%%%%%%%%%%%%%%%%%%%
\subsubsection{Bilinear forms over global fields}
%%%
%%% 
We assume in this section that $\mathbb F$ is a {\em global field}, i.e. either a {\em number field} (a finite extension of $\mathbb Q$) or $\mathbb F(t)$ (the {\em function field} over a finite field $\mathbb F$). Of chief interest to us will be the example of $\mathbb Q$ itself. Global fields are distinguished by the fact that their completions with respect to any non-Archimedean valuation are local fields.  

\begin{theorem}[Hasse-Minkowski Principle, \cite{Lam}, Page 170] \label{HasseMinkowski}
Let $\mathbb F$ be a global field, let $\{\mathfrak s_i\}_{i\in \mathcal I}$ be the set of all nontrivial spots on $\mathbb F$ (with $\mathcal I$ some indexing set), and let $v_i$ be a choice of a valuation from the spot $\mathfrak s_i$ for each $i\in \mathcal I$. Then a non-degenerate bilinear form $B$ over $\mathbb F$ is isotropic if and only if $B$ is isotropic over the completion $(\mathbb E,w_i)$ of $(\mathbb F,v_i)$ for every $i\in \mathcal I$. 
\end{theorem}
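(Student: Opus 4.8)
The plan is to prove the substantive ("if") direction by induction on the rank $n$ of $B$; the converse is immediate, since an isotropic vector for $B$ over $\mathbb F$ is a fortiori an isotropic vector over each completion $\mathbb E_i \supseteq \mathbb F$. I would carry out the details for $\mathbb F = \mathbb Q$, the case needed in this paper; by Ostrowski's theorem (Example~\ref{ValuationsOnQ}) the nontrivial spots are then represented by $v_\infty$ and the $p$-adic valuations $v_p$, with completions $\mathbb R$ and $\mathbb Q_p$. By Theorem~\ref{TheoremDiagonalizabilityOfQuadraticForms} I may assume $B = \langle a_1 \rangle \oplus \dots \oplus \langle a_n \rangle$, and after rescaling each $a_i$ by a square I may take the $a_i$ to be squarefree integers.

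\emph{Base cases.} For $n = 1$ the form $\langle a \rangle$ with $a \ne 0$ is anisotropic over every field, so there is nothing to prove. For $n = 2$, Theorem~\ref{TheoremCharacterizingSmallRankFormsOverQp}(i) (and the obvious analogue over $\mathbb R$) says $\langle a, b \rangle$ is isotropic over $\mathbb Q_v$ exactly when $-ab$ is a square in $\mathbb Q_v$, so I must show that a rational number which is a square in $\mathbb R$ and in every $\mathbb Q_p$ is a square in $\mathbb Q$: squareness in $\mathbb Q_p$ forces $\mathrm{ord}_p$ to be even for each $p$, hence the number is $\pm$ a perfect square, and positivity in $\mathbb R$ removes the sign ambiguity. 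The case $n = 3$ is the heart of the theorem. Here $B \cong \langle 1, -a, -b \rangle$ with $a, b$ squarefree integers, and I would prove Legendre's criterion — local solvability of $z^2 = a x^2 + b y^2$ everywhere implies global solvability — by combining Theorem~\ref{TheoremCharacterizingSmallRankFormsOverQp}(ii), which recasts each local condition via the Hilbert symbol $(a,b)_v$, with Hilbert's reciprocity law $\prod_v (a,b)_v = 1$ (equivalently, quadratic reciprocity) and a descent on $|a| + |b|$; Dirichlet's theorem on primes in arithmetic progressions enters in the descent.

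\emph{Inductive step ($n \ge 4$).} Split $B = \langle a_1, a_2 \rangle \oplus \langle -a_3, \dots, -a_n \rangle$; it suffices to produce $d \in \mathbb Q^\ast$ represented over $\mathbb Q$ by both summands, since then $B$ contains a copy of $\langle d \rangle \oplus \langle -d \rangle \cong \mathbb H$. Let $S$ be the finite set of places consisting of $v_\infty$, the prime $2$, and the primes dividing some $a_i$. For each $v \in S$, isotropy of $B$ over $\mathbb Q_v$ yields a common value $d_v \in \mathbb Q_v^\ast$ represented there by both summands. Using weak approximation together with Dirichlet's theorem, I would choose $d \in \mathbb Q^\ast$ lying in the prescribed square class at every $v \in S$ — so that, square classes being open and Hilbert symbols continuous, both $\langle a_1, a_2, -d \rangle$ and $\langle -a_3, \dots, -a_n, d \rangle$ remain isotropic over $\mathbb Q_v$ for $v \in S$ — and with support outside $S$ consisting of at most one auxiliary prime $q$. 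At every prime $p \notin S \cup \{q\}$ all entries of both forms are units and $p$ is odd, so each form, having rank $\ge 3$, is isotropic over $\mathbb Q_p$ by Theorem~\ref{FormsOverFiniteFields}(b) and Proposition~\ref{PropositionAnisotropicForms} (and rank $\ge 5$ forms are isotropic over any $\mathbb Q_v$ by Theorem~\ref{IsotropicFormsOfDimension5}(a)); at $q$ a further application of the Hilbert product formula forces isotropy. Thus $\langle a_1, a_2, -d \rangle$ is everywhere locally isotropic, hence isotropic over $\mathbb Q$ by the $n = 3$ case, so $d$ is represented by $\langle a_1, a_2 \rangle$; and $\langle -a_3, \dots, -a_n, d \rangle$ (of rank $n - 1$) is everywhere locally isotropic, hence isotropic over $\mathbb Q$ by the induction hypothesis, so $d$ is represented by $\langle -a_3, \dots, -a_n \rangle$. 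Combining the two representations of $d$ gives an isotropic vector for $B$.

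\emph{Main obstacle.} The genuine difficulty is concentrated in the $n = 3$ case and in the approximation bookkeeping of the inductive step: both rely on global arithmetic input — quadratic (Hilbert) reciprocity and Dirichlet's theorem on primes in arithmetic progressions — which has no local analogue and is not among the results recalled above. Everything else (diagonalization, the reduction to rank $3$ summands, and the local computations at the "good" primes) is organizational and is supplied by Theorems~\ref{FormsOverFiniteFields}, \ref{IsotropicFormsOfDimension5} and \ref{TheoremCharacterizingSmallRankFormsOverQp} and Proposition~\ref{PropositionAnisotropicForms}. For a general global field the same outline works, with Ostrowski's theorem replaced by the description of its places, unique factorization replaced by finiteness of the class group, and Dirichlet's theorem by its number-field generalization; the reciprocity formula $\prod_i (a,b)_{v_i} = 1$ persists and again drives the rank-$3$ step.
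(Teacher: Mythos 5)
The paper does not actually prove this statement: it is quoted as background, with a citation to \cite{Lam}, Page 170, so there is no internal argument to compare yours against; the relevant comparison is with the classical proof in Lam/Serre/Cassels, and your outline is precisely that proof (diagonalize; $n=1,2$ by the square-class computation, $n=3$ by a Legendre-type descent; for $n\ge 4$ produce a common represented value $d$ from local data at a finite bad set $S$, openness of squares and weak approximation, one auxiliary Dirichlet prime, isotropy at the good primes via Proposition \ref{PropositionAnisotropicForms} and Theorem \ref{FormsOverFiniteFields}(b), and Hilbert reciprocity at the leftover prime). As a sketch it is essentially correct, and you rightly isolate the genuine content---Hilbert reciprocity and Dirichlet's theorem---which cannot be recovered from the purely local results recalled in Section \ref{SectionOnBilinearForms}; that is exactly why the paper cites rather than proves, whereas your route buys a self-contained (if lengthy) argument at the cost of importing those two global inputs, whose exact placement ($n=3$ versus the prescribed-Hilbert-symbols step) varies among standard treatments and is fine as you allocate it. One sign slip: the splitting should be $B\cong(\langle a_1\rangle\oplus\langle a_2\rangle)\oplus(\langle a_3\rangle\oplus\dots\oplus\langle a_n\rangle)$ with $d$ represented by the first summand and $-d$ by the second (equivalently $d$ by $\langle -a_3\rangle\oplus\dots\oplus\langle -a_n\rangle$), so the auxiliary rank-$(n-1)$ form is $\langle -a_3\rangle\oplus\dots\oplus\langle -a_n\rangle\oplus\langle -d\rangle$; as literally written, $d$ represented by ``both summands'' of your displayed form gives $\langle d\rangle\oplus\langle d\rangle$ rather than a hyperbolic plane. (Also note Theorem \ref{TheoremCharacterizingSmallRankFormsOverQp}(i) as printed says $(\mathbb Q_2^\ast)^2$ where $(\mathbb Q_p^\ast)^2$ is intended; you use the corrected statement.)
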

%%%
%%%
\begin{corollary} \label{HasseMinkovskiOverQ}
A bilinear form $B$ is isotropic over $\mathbb Q$ if and only if it is isotropic over $\mathbb R$ and over $\mathbb Q_p$ for every prime $p$. 
\end{corollary}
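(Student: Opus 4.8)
The plan is to simply specialize the Hasse--Minkowski Principle (Theorem \ref{HasseMinkowski}) to the case $\mathbb F = \mathbb Q$ and then translate the abstract statement about ``all nontrivial spots'' into the concrete list of completions provided by Ostrowski's Theorem. So the first step is to recall from Example \ref{ValuationsOnQ} that the set of nontrivial spots on $\mathbb Q$ is exactly
$$\Omega(\mathbb Q)\setminus\{[v_0]\} = \{[v_p] : p \text{ prime}\}\cup\{[v_\infty]\},$$
i.e. one spot for each prime $p$ (the $p$-adic spot) together with the Archimedean spot $[v_\infty]$. This is a finite-to-countable but explicit indexing set, and it is the indexing set $\{\mathfrak s_i\}_{i\in\mathcal I}$ appearing in Theorem \ref{HasseMinkowski}.

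The second step is to identify the completion of $\mathbb Q$ at each of these spots, which was done in Example \ref{ExampleFirstIntroductionOfThePAdicNumbers}: the completion at $[v_p]$ is the $p$-adic field $(\mathbb Q_p, w_p)$, and the completion at $[v_\infty]$ is $(\mathbb R, |\cdot|)$. Since by the Remark following the uniqueness theorem for completions the completion depends only on the spot and not on the chosen representative valuation, it does not matter which $v_i$ we pick from each spot. Plugging these identifications into Theorem \ref{HasseMinkowski} yields: a non-degenerate bilinear form $B$ over $\mathbb Q$ is isotropic if and only if it is isotropic over $\mathbb R$ and over $\mathbb Q_p$ for every prime $p$, which is exactly the assertion of the corollary.

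There is essentially no obstacle here — the corollary is a direct dictionary translation of Theorem \ref{HasseMinkowski} through Ostrowski's Theorem. The only point requiring a word of care is the implicit non-degeneracy hypothesis: Theorem \ref{HasseMinkowski} is stated for non-degenerate forms, and one should note that non-degeneracy of $B$ over $\mathbb Q$ is preserved under the field extensions $\mathbb Q\hookrightarrow\mathbb Q_p$ and $\mathbb Q\hookrightarrow\mathbb R$ (the determinant of a representing matrix stays nonzero), so that ``$B$ isotropic over $\mathbb Q_p$'' and ``$B$ isotropic over $\mathbb R$'' are meaningful in the sense used earlier in this section. With that observed, the corollary follows immediately.
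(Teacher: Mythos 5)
Your proposal is correct and matches the paper's intended argument exactly: the corollary is presented there as an immediate specialization of Theorem \ref{HasseMinkowski}, with the nontrivial spots on $\mathbb Q$ given by Ostrowski's Theorem (Example \ref{ValuationsOnQ}) and their completions identified as $\mathbb R$ and the fields $\mathbb Q_p$ (Example \ref{ExampleFirstIntroductionOfThePAdicNumbers}). Your extra remark about non-degeneracy being preserved under the embeddings $\mathbb Q\hookrightarrow\mathbb Q_p$ and $\mathbb Q\hookrightarrow\mathbb R$ is a reasonable, if minor, point of care that the paper leaves tacit.
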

%%%
%%%
A non-degenerate bilinear form $B$ over $\mathbb R$ is anisotropic if and only if it is definite, either positive ($B(x,x)>0$ for all $x\ne 0$) or negative ($B(x,x)<0$ for all $x\ne 0$). Accordingly any definite form over $\mathbb Q$ is also anisotropic. 

An indefinite form $B$ over $\mathbb Q$ is isotropic if and only if it is isotropic over $\mathbb Q_p$ for all primes $p$. By Theorem \ref{IsotropicFormsOfDimension5} this is automatically the case if rank$(B)\ge 5$, leading to this useful observation (known as Meyer's Theorem). 
%%%
%%%
\begin{theorem}  \label{TheoremMeyer} 
All non-degenerate indefinite bilinear forms over $\mathbb Q$ of rank at least 5, are isotropic. 
\end{theorem}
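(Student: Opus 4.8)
The plan is to deduce this immediately from the Hasse--Minkowski principle as packaged in Corollary \ref{HasseMinkovskiOverQ}, together with the local statement of Theorem \ref{IsotropicFormsOfDimension5}. Let $B$ be a non-degenerate indefinite bilinear form over $\mathbb Q$ with $\mathrm{rank}(B) \ge 5$. By Corollary \ref{HasseMinkovskiOverQ}, it suffices to check that $B$ is isotropic over $\mathbb R$ and over $\mathbb Q_p$ for every prime $p$.

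For the Archimedean place, I would invoke the observation recorded just before Theorem \ref{TheoremMeyer}: a non-degenerate real bilinear form is anisotropic precisely when it is definite. Since $B$ is indefinite over $\mathbb Q$, it takes both a positive and a negative value on $\mathbb Q^n \subset \mathbb R^n$, hence is indefinite over $\mathbb R$, hence isotropic over $\mathbb R$. For the non-Archimedean places, the key point is that $\mathbb Q$ is a global field, so each completion $\mathbb Q_p$ is a local field; since $\mathrm{rank}(B) \ge 5$, part (a) of Theorem \ref{IsotropicFormsOfDimension5} applies and shows that $B$ is isotropic over $\mathbb Q_p$ for every prime $p$.

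Having verified isotropy over all completions of $\mathbb Q$, Corollary \ref{HasseMinkovskiOverQ} then yields that $B$ is isotropic over $\mathbb Q$ itself, which is the claim. There is no genuine obstacle here: the content has already been absorbed into the Hasse--Minkowski principle and the local classification, and the only thing to be careful about is the elementary step that an indefinite rational form remains indefinite (and therefore isotropic) over $\mathbb R$.
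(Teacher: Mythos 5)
Your proposal is correct and follows exactly the paper's route: the result is deduced from Corollary \ref{HasseMinkovskiOverQ} (Hasse--Minkowski over $\mathbb Q$), using part (a) of Theorem \ref{IsotropicFormsOfDimension5} at every prime $p$ and indefiniteness (hence isotropy) at the real place. No gaps; nothing further is needed.
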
 
%%%
%%%
The cases for which verifying the anisotropy property is nontrivial, are those of indefinite forms over $\mathbb Q$ of rank no greater than 4. Theorem \ref{HasseMinkowski} and the results from Section \ref{SectionQuadraticFormsOverLocalFields} give an effective method of dealing with such forms, as illustrated by the next example.   

\begin{example}
Determine if the form $B=\langle -5\rangle \oplus \langle 3 \rangle \oplus \langle 21\rangle \oplus \langle 7\rangle$ is isotropic over $\mathbb Q$.  

For any odd prime $p\notin \{3, 5, 7\}$, $B$ is isotropic over $\mathbb Q_p$ by Propositions \ref{PropositionAnisotropicForms} and \ref{FormsOverFiniteFields}.

For $p\in \{3, 5, 7\}$ we shall use part (b) of Proposition \ref{PropositionAnisotropicForms} along with Proposition \ref{FormsOverFiniteFields}. Specifically, for each $p\in \{3, 5, 7\}$ we shall write $B$ as a sum $B=B_1\oplus B_2$ with $B_1=\langle u_1\rangle \oplus \dots \oplus \langle u_r\rangle$ and with  $B_2=\langle pu_{r+1}\rangle \oplus \dots \oplus \langle pu_n\rangle$ with $u_i\in U(\mathbb Q_p)$ (and with $B_1$ and $B_2$ varying with $p$). Let $\bar B_1 = \langle \bar u_1\rangle \oplus \dots \oplus \langle \bar u_r\rangle$ and $\bar B_2=\langle \bar u_{r+1}\rangle \oplus \dots \oplus \langle \bar u_n\rangle$ be the corresponding forms over $\mathbb F_p$. If either $\bar B_1$ or $\bar B_2$ is isotropic over $\mathbb F_p$ then $B$ is also isotropic over $\mathbb Q_p$. 

\begin{itemize}
\item[(i)] For $p=5$ we obtain 
\begin{align*}
\bar B_1 & = \langle 3\rangle \oplus \langle 21\rangle \oplus \langle 7\rangle \cong   \langle 2\rangle \oplus \langle 1\rangle \oplus \langle 2\rangle \cr
\bar B_2 & = \langle -1\rangle .
\end{align*}
Of these $\bar B_1$ is isotropic by Proposition \ref{FormsOverFiniteFields} and so $B$ is isotropic over $\mathbb Q_5$.
%%%
\item[(ii)] For $p=7$ one finds 
\begin{align*}
\bar B_1 & = \langle -5\rangle \oplus \langle 3\rangle \oplus \langle -31\rangle \cong   \langle 1\rangle \oplus \langle -1\rangle \cr
\bar B_2 & = \langle 3\rangle \oplus \langle 1\rangle \cong \langle -1\rangle \oplus \langle 1\rangle.
\end{align*}
Both $\bar B_1$ and $\bar B_2$ are isotropic, so $q$ is isotropic over $\mathbb Q_7$.  
%%%
\item[(iii)] We left the case $p=3$ for last as 
\begin{align*}
\bar B_1 & = \langle -5\rangle \oplus \langle 7\rangle  \cong   \langle 1\rangle \oplus \langle 1\rangle \cr
\bar B_2 & = \langle 1\rangle \oplus \langle 7 \rangle  \cong   \langle 1\rangle \oplus \langle 1\rangle
\end{align*}
both of which are anisotropic over $\mathbb F_3$. It follows that $B$ is anisotropic over $\mathbb Q_3$ and therefore also over $\mathbb Q$.   
%%%
\end{itemize}
%%%
%%%
If $B$ had turned out to be isotropic over $\mathbb Q_3$ as well, we would have had to turn to $\mathbb Q_2$ and verified $B$'s isotropy property there. Since rank$(B)=4$, Part (b) of Theorem \ref{IsotropicFormsOfDimension5} shows that $B$ is in fact isotropic over $\mathbb Q_2$. 
\end{example}
%%%%%%%%%%%%%%%%%%%%%%%%%%%%%%%%%%%%%%%%
%%%%%%%%%%%%%%%%%%%%%%%%%%%%%%%%%%%%%%%%
%%%%%%%%%%%%%%%%%%%%%%%%%%%%%%%%%%%%%%%%
%%%%%%%%%%%%%%%%%%%%%%%%%%%%%%%%%%%%%%%%
\subsection{Witt Rings}  \label{SectionOnWittRings}
%%%
%%%
Let $\mathbb F$ be any field. Our goal in this section is to organize isomorphisms classes of non-degenerate symmetric bilinear forms over $\mathbb F$ into a commutative ring $W(\mathbb F)$, called the {\em Witt ring} of $\mathbb F$. The basis for its construction in the case of nondyadic fields has been laid in Witt's celebrated work \cite{Witt}. The parts of our exposition that deal with dyadic fields lean on \cite{Szymiczek}.
%%%%%%%%%%%%%%%%%%%%%%%%%%%%%%%%%%%%%%%%
%%%%%%%%%%%%%%%%%%%%%%%%%%%%%%%%%%%%%%%%
%%%%%%%%%%%%%%%%%%%%%%%%%%%%%%%%%%%%%%%%
%%%%%%%%%%%%%%%%%%%%%%%%%%%%%%%%%%%%%%%%
\subsubsection{The Witt Ring Construction}
%%%
%%%
Fix a field $\mathbb F$ of any characteristic. Define a relation $\sim$ on the set of  non-degenerate symmetric bilinear forms over $\mathbb F$ by letting $B_1$ be in relation with $B_2$, and write $B_1 \sim B_2$, if there exist metabolic forms  $B_1'$ and $B_2'$ such that 
$$B_1 \oplus B_1' \cong B_2 \oplus B_2'.$$
We shall rely on the convention that the 0-dimensional vector space with its unique bilinear form, is both anisotropic and metabolic. With this understood, it is not hard to verify that the relation $\sim$ is an equivalence relation and we shall write $[B]$ for the equivalence class of the form $B$. 
%%%
%%%

The {\em Witt ring $W(\mathbb F)$} is the commutative ring whose underlying set is that of equivalence classes of isomorphism classes of non-degenerate symmetric bilinear forms $[B]$, equipped with the operations $\oplus$  and $\otimes$, defined by 
$$[B_1]\oplus [B_2]=[B_1\oplus B_2] \qquad \text{ and } \qquad [B_1]\otimes [B_2] = [B_1\otimes B_2].$$ 
These operations are well defined on equivalence classes. The zero element in $W(\mathbb F)$ is given by the equivalence class of any metabolic form, and the inverse $-[(V,B)]$ is given by $[(V,-B)]$. Indeed, $(V,B) \oplus (V,-B)$ is isomorphic to the direct sum of $\dim _{\mathbb F} V$ metabolic spaces, each of dimension 2 (cd. \cite{Szymiczek}, Page 141). 

%%%
%%%
\begin{theorem}[\cite{Lam}, Page 12 and \cite{Szymiczek}, Page 158] \label{TheoremOnUniqueAnisotropicRepresentatives}
Let $\mathbb F$ be any field. Then every non-degenerate bilinear form $B$ over $\mathbb F$ can be decomposed as $B\cong  B_{me} \oplus B_{an}$ with $B_{me}$ a metabolic form and with $B_{an}$ anisotropic. If $B\cong  B_{me} \oplus B_{an}$ and $B\cong B'_{me} \oplus B'_{an}$
are two such decompositions, then $B_{an} \cong B'_{an}$ and rank$(B_{me})=$rank$(B'_{me})$. If additionally char $(\mathbb F)\ne 2$, then $B_{me} \cong B'_{me}$.
\end{theorem}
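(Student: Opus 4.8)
The plan is to prove existence of the decomposition first, then uniqueness, reducing everything to the structure of metabolic and anisotropic forms already established in the excerpt. For existence, I would argue by induction on $\operatorname{rank}(B)$. If $B$ is anisotropic, take $B_{an}=B$ and $B_{me}$ the zero form. If $B$ is isotropic, choose an isotropic vector $x\neq 0$; since $B$ is non-degenerate there is $y$ with $B(x,y)\neq 0$, and one checks (rescaling $y$) that the plane $P=\operatorname{span}\{x,y\}$ carries a non-degenerate subform isomorphic to the hyperbolic plane $\mathbb H$, which is metabolic. Then $V=P\oplus P^{\perp}$ with $B|_{P^{\perp}}$ non-degenerate of rank $\operatorname{rank}(B)-2$, and the inductive hypothesis applied to $P^{\perp}$ yields $P^{\perp}\cong B'_{me}\oplus B_{an}$; setting $B_{me}=\mathbb H\oplus B'_{me}$ completes the step. (The case $\operatorname{char}(\mathbb F)=2$ with $B$ alternating needs a parallel argument where the role of $\mathbb H$ is played by the alternating hyperbolic plane of Example~\ref{ExampleHyperbolicPlane}; diagonalization via Theorem~\ref{TheoremDiagonalizabilityOfQuadraticForms} is unavailable there, but the orthogonal-splitting-off-a-hyperbolic-plane argument still works.)

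For uniqueness of $B_{an}$ and of $\operatorname{rank}(B_{me})$, suppose $B\cong B_{me}\oplus B_{an}\cong B'_{me}\oplus B'_{an}$. Since metabolic forms represent $0$ in $W(\mathbb F)$, both decompositions show $[B]=[B_{an}]=[B'_{an}]$ in $W(\mathbb F)$, so $B_{an}\sim B'_{an}$ with both anisotropic. Here I would invoke Witt cancellation / the Witt decomposition uniqueness for anisotropic forms — concretely, if $B_{an}\oplus M\cong B'_{an}\oplus M'$ with $M,M'$ metabolic, then comparing ranks modulo the fact that metabolic forms have even rank forces $\operatorname{rank}(B_{an})\equiv\operatorname{rank}(B'_{an})$, and an anisotropic summand inside a form is unique up to isomorphism because any two maximal totally isotropic subspaces of the "hyperbolic part" have the same dimension. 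This gives $B_{an}\cong B'_{an}$, and then $\operatorname{rank}(B_{me})=\operatorname{rank}(B)-\operatorname{rank}(B_{an})=\operatorname{rank}(B'_{me})$ follows immediately.

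For the final sharper claim — that $B_{me}\cong B'_{me}$ when $\operatorname{char}(\mathbb F)\neq 2$ — I would use the classification of metabolic forms in characteristic $\neq 2$: every metabolic form is isomorphic to an orthogonal sum of hyperbolic planes $\mathbb H$, i.e. $k\cdot\mathbb H$ for some $k$, hence determined up to isomorphism by its rank. Since we have just shown $\operatorname{rank}(B_{me})=\operatorname{rank}(B'_{me})$, both equal $k\cdot\mathbb H$ for the same $k$, so $B_{me}\cong B'_{me}$. (In characteristic $2$ this fails because a metabolic form can be a sum of ordinary hyperbolic planes or of alternating ones and these need not be isomorphic — hence the hypothesis.)

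The main obstacle I expect is not the existence step but pinning down the uniqueness of the anisotropic part cleanly without circularity: Witt cancellation itself is usually the crux, and one must be careful that the cited facts (Corollary~\ref{CorollaryOnDiagonalizabilityOfAnisotropicForms}, and the structure statements about metabolic forms) genuinely suffice in both the dyadic and nondyadic cases. The cleanest route is probably to first establish Witt cancellation for hyperbolic (or alternating hyperbolic) planes as a lemma — "if $B_1\oplus\mathbb H\cong B_2\oplus\mathbb H$ then $B_1\cong B_2$" — and bootstrap from there; everything else is bookkeeping with ranks and the even-rank property of metabolic forms.
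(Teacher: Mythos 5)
The paper does not actually prove this statement --- it is imported from Lam and Szymiczek --- so your argument has to stand on its own. Your existence half essentially does: splitting off the plane spanned by an isotropic vector $x$ and a $y$ with $B(x,y)\neq 0$, passing to the orthogonal complement, and inducting is the standard argument. One correction: in characteristic $2$ with $B$ non-alternating you cannot in general normalize $y$ so that $B(y,y)=0$ (that step divides by $2$), so the split-off plane is metabolic but need not be isomorphic to $\mathbb H$. Since ``metabolic'' is all the decomposition requires, this is harmless once rephrased, but your claim that the plane is always $\cong\mathbb H$ is false in the dyadic non-alternating case.

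The genuine gap is in the uniqueness half. As written it is circular: deducing $[B_{an}]=[B'_{an}]$ in $W(\mathbb F)$ and then ``invoking Witt decomposition uniqueness for anisotropic forms'' invokes the very statement being proved, and your offered justification (two maximal totally isotropic subspaces have the same dimension) yields only the rank statement, not $B_{an}\cong B'_{an}$. Your proposed repair --- prove cancellation of $\mathbb H$ and bootstrap --- works in characteristic $\neq 2$ (where it is Witt's cancellation theorem, which itself still needs a proof, e.g.\ via reflections), but it cannot cover characteristic $2$, which the theorem includes. Over a dyadic field, metabolic forms are not sums of hyperbolic planes: $\langle 1\rangle\oplus\langle 1\rangle$ is metabolic (the vector $(1,1)$ is isotropic) yet not isomorphic to $\mathbb H$, since $\mathbb H$ is alternating there. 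So the two metabolic parts you want to compare need not decompose into copies of $\mathbb H$ at all, and cancellation of the planes that do occur genuinely fails: over any field of characteristic $2$ one has $\langle 1\rangle\oplus\langle 1\rangle\oplus\langle 1\rangle\cong\langle 1\rangle\oplus\mathbb H$, hence $\bigl(\langle 1\rangle\oplus\langle 1\rangle\bigr)\oplus\bigl(\langle 1\rangle\oplus\langle 1\rangle\bigr)\cong\bigl(\langle 1\rangle\oplus\langle 1\rangle\bigr)\oplus\mathbb H$ while $\langle 1\rangle\oplus\langle 1\rangle\not\cong\mathbb H$; this failure of cancellation is precisely why the theorem asserts only rank equality of the metabolic parts in general. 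The uniqueness of $B_{an}$ over an arbitrary field therefore needs a different argument than plane-by-plane cancellation --- e.g.\ the lagrangian (``sublagrangian reduction'') proof found in Szymiczek or in Elman--Karpenko--Merkurjev, which shows directly that if $c\oplus m\cong c'\oplus m'$ with $c,c'$ anisotropic and $m,m'$ metabolic then $c\cong c'$. Your endgame in characteristic $\neq 2$ ($B_{me}\cong k\cdot\mathbb H$ is determined by its rank, hence $B_{me}\cong B'_{me}$) is correct once the above is in place.
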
  
%%%
%%%
We refer to the decomposition $B\cong B_{me}\oplus B_{an}$ from the preceding theorem as the {\em Witt decomposition} of the form $B$. Theorem \ref{TheoremOnUniqueAnisotropicRepresentatives} shows that the equivalence class $[B]$ contains, up to isomorphism,  a unique anisotropic representative. Given any decomposition of $B\cong B_{me}\oplus B_{an}$ as above, clearly $[B] = [B_{an}]$ and the isomorphsim class of $B_{an}$ is uniquely determined by that of $B$. This observation, in combination with   Corollary \ref{CorollaryOnDiagonalizabilityOfAnisotropicForms} yields this useful consequence. 
%%%
%%%
\begin{corollary} \label{CorollaryWittRingIsGeneratedByOneDimensionalForms}
Let $\mathbb F$ be any field. The Witt class $[B]$ of any non-degenerate symmetric bilinear form $B$ over $\mathbb F$ contains a  representative of the form $\langle a_1 \rangle \oplus \dots \oplus \langle a_n\rangle$ for some choices of $a_1, \dots, a_n \in \mathbb F^\ast$. Said differently, $W(\mathbb F)$ is additively generated by 1-dimensional forms.  
\end{corollary}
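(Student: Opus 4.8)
The statement to prove is Corollary~\ref{CorollaryWittRingIsGeneratedByOneDimensionalForms}: every Witt class $[B]$ over an arbitrary field $\mathbb F$ contains a representative that is an orthogonal sum of one-dimensional forms $\langle a_1\rangle\oplus\dots\oplus\langle a_n\rangle$. The plan is to run the argument directly through the two ingredients the paper has just set up: Theorem~\ref{TheoremOnUniqueAnisotropicRepresentatives} (the Witt decomposition $B\cong B_{me}\oplus B_{an}$, unique anisotropic part) and Corollary~\ref{CorollaryOnDiagonalizabilityOfAnisotropicForms} (anisotropic forms over any field are diagonalizable).

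First I would fix a non-degenerate symmetric bilinear form $B$ over $\mathbb F$ and invoke Theorem~\ref{TheoremOnUniqueAnisotropicRepresentatives} to write $B\cong B_{me}\oplus B_{an}$ with $B_{me}$ metabolic and $B_{an}$ anisotropic. Since $B_{me}$ is metabolic, it represents $0$ in $W(\mathbb F)$ (the zero element of the Witt ring is the class of any metabolic form), hence $[B]=[B_{me}]\oplus[B_{an}]=[B_{an}]$ in $W(\mathbb F)$. Thus the anisotropic form $B_{an}$ is itself a representative of the class $[B]$.

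Next I would apply Corollary~\ref{CorollaryOnDiagonalizabilityOfAnisotropicForms} to $B_{an}$: being anisotropic over $\mathbb F$, it admits a diagonalization $B_{an}\cong\langle a_1\rangle\oplus\dots\oplus\langle a_n\rangle$ for suitable $a_1,\dots,a_n\in\mathbb F^\ast$ (each $a_i$ is nonzero because $B_{an}$, like $B$, is non-degenerate, and a diagonal entry $0$ would make the form degenerate). Composing the two isomorphisms, $B_{an}$ is a diagonal representative of $[B]$, which is exactly the assertion. The final sentence of the corollary — that $W(\mathbb F)$ is additively generated by the classes $[a]=[\langle a\rangle]$ — then follows immediately, since $[B]=[a_1]\oplus\dots\oplus[a_n]$.

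I do not anticipate a genuine obstacle here: the corollary is a formal consequence of results already in hand, and the only point requiring a word of care is the characteristic-$2$ subtlety — Corollary~\ref{CorollaryOnDiagonalizabilityOfAnisotropicForms} is stated precisely so that it applies to anisotropic forms over \emph{any} field (anisotropic forms are automatically non-alternating, since an alternating form is isotropic as soon as its dimension is positive, per Example~\ref{ExampleHyperbolicPlane} and the remarks following Theorem~\ref{TheoremDiagonalizabilityOfQuadraticForms}), so no dyadic exception intrudes. The mild work is just bookkeeping: checking that passing to the anisotropic part does not change the Witt class and that diagonal entries are units of $\mathbb F^\ast$.
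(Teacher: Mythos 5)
Your argument is correct and is essentially the paper's own: the paper likewise derives this corollary from the Witt decomposition in Theorem \ref{TheoremOnUniqueAnisotropicRepresentatives} (so that $[B]=[B_{an}]$) combined with Corollary \ref{CorollaryOnDiagonalizabilityOfAnisotropicForms} applied to the anisotropic part. Your added remark on the characteristic-$2$ case (anisotropic forms of positive rank are never alternating) is exactly the point that makes Corollary \ref{CorollaryOnDiagonalizabilityOfAnisotropicForms} apply over any field.
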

%%%
%%%
In the case of nondyadic fields, every metabolic form $B_{me}$ is a sum of copies of the hyperbolic form $\mathbb H$ from Example \ref{ExampleHyperbolicPlane}, that is $B_{me}\cong \mathbb H^n$ for some $n\ge 0$ (cf. \cite{Szymiczek}, Pages 140--144). 
%%%
%%%
\begin{corollary} \label{CorollaryDecompositionOfIsotropicFormsOverNondyadicFields}
If $B$ is a non-degenerate symmetric bilinear form over a nondyadic field $\mathbb F$, then $B\cong \mathbb H^n \oplus B_{an}$ with the integer $n\ge 0$ (called the index of isotropy of $B$) and the isomorphism class of the anisotropic form $B_{an}$, uniquely determined by $B$. 
\end{corollary}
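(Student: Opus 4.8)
The plan is to read this off directly from the Witt decomposition of Theorem~\ref{TheoremOnUniqueAnisotropicRepresentatives} together with the structure of metabolic forms over nondyadic fields recalled just above the corollary. First I would apply Theorem~\ref{TheoremOnUniqueAnisotropicRepresentatives} to the given non-degenerate form $B$ over the nondyadic field $\mathbb F$, obtaining a decomposition $B \cong B_{me}\oplus B_{an}$ with $B_{me}$ metabolic and $B_{an}$ anisotropic. Since $\operatorname{char}(\mathbb F)\ne 2$, every metabolic form is isomorphic to an orthogonal sum of hyperbolic planes (cf.\ \cite{Szymiczek}, Pages 140--144), so $B_{me}\cong \mathbb H^{n}$ with $2n=\operatorname{rank}(B_{me})$; in particular $n$ is determined by $\operatorname{rank}(B_{me})$ alone. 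Substituting gives $B\cong \mathbb H^{n}\oplus B_{an}$, which is the asserted decomposition.

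For the uniqueness clause, suppose $B\cong \mathbb H^{n}\oplus B_{an}\cong \mathbb H^{n'}\oplus B'_{an}$ are two such decompositions. Each is in particular a decomposition of the type appearing in Theorem~\ref{TheoremOnUniqueAnisotropicRepresentatives}, because $\mathbb H^{n}$ and $\mathbb H^{n'}$ are metabolic (each hyperbolic plane $\mathbb H$ of Example~\ref{ExampleHyperbolicPlane} has the $1$-dimensional totally isotropic subspace $\{(x,0)\}$, so $\mathbb H^n$ has a totally isotropic subspace of half its dimension) and $B_{an}$, $B'_{an}$ are anisotropic. That theorem then yields $B_{an}\cong B'_{an}$ and $\operatorname{rank}(\mathbb H^{n}) = \operatorname{rank}(\mathbb H^{n'})$, i.e.\ $2n = 2n'$, hence $n = n'$. (Alternatively, since $\mathbb F$ is nondyadic one may invoke the stronger conclusion $B_{me}\cong B'_{me}$ of Theorem~\ref{TheoremOnUniqueAnisotropicRepresentatives}, which forces $\mathbb H^{n}\cong \mathbb H^{n'}$ and then $n=n'$ by comparing ranks.) Thus both $n$ and the isomorphism class of $B_{an}$ are invariants of $B$.

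There is essentially no obstacle here: the corollary is a bookkeeping consequence of Theorem~\ref{TheoremOnUniqueAnisotropicRepresentatives} and the cited fact that nondyadic metabolic forms split as sums of hyperbolic planes. The only point requiring a moment's care is to recognize the two decompositions $\mathbb H^{n}\oplus B_{an}$ and the abstract $B_{me}\oplus B_{an}$ as instances of the same type of decomposition, so that the uniqueness part of Theorem~\ref{TheoremOnUniqueAnisotropicRepresentatives} applies verbatim; once that is noted, uniqueness of $n$ reduces to the triviality $2n = 2n' \Rightarrow n = n'$.
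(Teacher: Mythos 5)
Your argument is correct and is exactly the paper's (implicit) one: the corollary is stated as an immediate consequence of the Witt decomposition in Theorem \ref{TheoremOnUniqueAnisotropicRepresentatives} together with the cited fact that over a nondyadic field every metabolic form is a sum of hyperbolic planes. Your uniqueness step, recognizing $\mathbb H^{n}\oplus B_{an}$ as a Witt decomposition and comparing ranks to get $n=n'$, is the same bookkeeping the paper relies on.
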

%%%%%%%%%%%%%%%%%%%%%%%%%%%%%%%%%%%%%%%%
%%%%%%%%%%%%%%%%%%%%%%%%%%%%%%%%%%%%%%%%
%%%%%%%%%%%%%%%%%%%%%%%%%%%%%%%%%%%%%%%%
%%%%%%%%%%%%%%%%%%%%%%%%%%%%%%%%%%%%%%%%
\subsubsection{The residue homomorphisms}
%%%
%%%
Let $(\mathbb F, v)$ be a complete discretely valuated field and let $\pi\in \mathfrak p$ be a uniformizer. Recall (cf. \eqref{QuadraticFormsOverCompleteDiscretelyValuatedFields}) that a non-degenerate bilinear form $B$ over $\mathbb F$ is isomorphic to one of the form
$$B\cong \langle u_1\rangle \oplus \dots \oplus \langle u_r\rangle \oplus \langle u_{r+1}\pi\rangle \oplus \dots \oplus \langle u_n\pi\rangle,$$
with $u_1, \dots, u_n\in U$ being units, provided that either char$(\mathbb F)\ne 2$ or that $B$ is not alternating. In particular this assertion holds for all anisotropic forms. Recall also that the map from $A\to A/\mathfrak p =\overline{\mathbb F}$ is denoted by $a\mapsto \bar a$. 

With this in mind we define two functions $\partial ^1, \partial_\pi ^2 : W(\mathbb F) \to W(\overline{\mathbb F})$, called the {\em first and second residue homomorphism} respectively. On generators of $W(\mathbb F)$ (i.e. on rank 1 forms) they are determined by  
\begin{equation} \label{EquationTheResidueHomomorphisms}
\begin{array}{rlcrl}
\partial ^1([u] ) & = [ \bar u ] &    \qquad \qquad \qquad & \partial_\pi ^2([ u ] ) & = 0, \cr 
\partial ^1([ u\pi ] ) & = 0 &   \qquad \qquad \qquad & \partial_\pi ^2([ u\pi  ] ) & = [ \bar u ],
\end{array}
\end{equation}
and they are extended to all of $W(\mathbb F)$ additively. Here $[a]$ denotes the equivalence class in $W(\mathbb F)$ of the form $\langle a \rangle$. Note that $\partial ^1$ is independent of the choice of uniformizer $\pi$, but $\partial^2_\pi$ does depend on $\pi$, as reflected in our notation. The following theorem is due to Springer and Kneser. 
%%%
%%%
\begin{theorem}[\cite{Lam}, Page 147 and \cite{MilnorHusemoller}, Page 85]
The functions $\partial_\pi ^1, \partial_\pi ^2 :W(\mathbb F) \to W(\overline{\mathbb F})$ are group homomorphisms, and if char$(\mathbb F)\ne 2$, then  
$$\partial ^1 \oplus \partial_\pi ^2:W(\mathbb F) \to W(\overline{\mathbb F})\oplus W(\overline{\mathbb F})$$ 
is a group isomorphism.  
\end{theorem}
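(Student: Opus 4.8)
The plan is to verify that $\partial^1$ and $\partial_\pi^2$ are well defined group homomorphisms, and then in the nondyadic case produce an explicit two-sided inverse for $\partial^1 \oplus \partial_\pi^2$, thereby establishing it is an isomorphism. The first task is that the formulas in \eqref{EquationTheResidueHomomorphisms} actually descend to $W(\mathbb F)$. Concretely, a form $\langle u \rangle$ with $u \in U$ is unchanged, up to the image in $W(\mathbb F)$, if we replace $u$ by $u\lambda^2$ for $\lambda \in \mathbb F^\ast$; writing $\lambda = w\pi^k$ with $w \in U$, we see $u\lambda^2 = (uw^2)\pi^{2k}$, so the ``$\pi$-part'' of the exponent is unchanged mod $2$ and the unit part changes by a square $\bar w^2$ in $\overline{\mathbb F}^\ast$. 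Hence the value of $\partial^1$ and $\partial_\pi^2$ on $\langle u \rangle$ and $\langle u\pi \rangle$ is insensitive to scaling by squares. Using Theorem~\ref{TheoremDiagonalizabilityOfQuadraticForms} (or Corollary~\ref{CorollaryWittRingIsGeneratedByOneDimensionalForms}) every class in $W(\mathbb F)$ is a sum of such one-dimensional generators, so additive extension makes sense on the free side; one must then check that the two defining Witt relations — adding a metabolic form and passing to an isomorphic form — are respected. For metabolic forms over a nondyadic field, by Corollary~\ref{CorollaryDecompositionOfIsotropicFormsOverNondyadicFields} it suffices to handle $\mathbb H \cong \langle 1 \rangle \oplus \langle -1 \rangle$: here $\partial^1(\mathbb H) = [\bar 1] + [\overline{-1}] = [1 \oplus (-1)] = 0$ in $W(\overline{\mathbb F})$ because $\mathbb H(\overline{\mathbb F})$ is metabolic, and similarly $\partial_\pi^2(\mathbb H) = 0$; and $\mathbb H \otimes \langle a \rangle$ for a unit or uniformizer-multiple $a$ behaves the same way, so adding any metabolic form kills nothing and the maps are well defined. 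Respecting isomorphisms of diagonalized forms over a complete discretely valuated field amounts to the uniqueness statements in \eqref{QuadraticFormsOverCompleteDiscretelyValuatedFields}, i.e. the well-definedness of the unit-part and $\pi$-part data — this is where the completeness hypothesis is used, via Proposition~\ref{PropositionAnisotropicForms}.

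Granting well-definedness and the homomorphism property, the core of the theorem in the case $\operatorname{char}(\mathbb F) \ne 2$ is bijectivity of $\Phi := \partial^1 \oplus \partial_\pi^2 : W(\mathbb F) \to W(\overline{\mathbb F}) \oplus W(\overline{\mathbb F})$. I would define a candidate inverse $\Psi$ on generators by $\Psi([\bar u], 0) = [u]$ and $\Psi(0, [\bar u]) = [u\pi]$, where for each class $[\bar u] \in W(\overline{\mathbb F})$ one picks a diagonal anisotropic representative $\langle \bar u_1 \rangle \oplus \cdots \oplus \langle \bar u_k \rangle$ and lifts each $\bar u_i \in \overline{\mathbb F}^\ast$ to a unit $u_i \in U$ (possible since $A \to \overline{\mathbb F}$ is surjective on units). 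One checks $\Psi$ is well defined — different lifts of $\bar u$ differ by an element of $\mathfrak p$, hence by a unit congruent to $1 \pmod{\mathfrak p}$, and such a unit is a square in $\mathbb F$ when $\operatorname{char}(\overline{\mathbb F}) \ne 2$ by Hensel's lemma, so $\langle u \rangle$ and $\langle u' \rangle$ agree in $W(\mathbb F)$. The composite $\Phi \circ \Psi$ is the identity by direct computation on generators using \eqref{EquationTheResidueHomomorphisms}. For $\Psi \circ \Phi = \mathrm{id}$: given $[B] \in W(\mathbb F)$, write its anisotropic representative as $\langle u_1 \rangle \oplus \cdots \oplus \langle u_r \rangle \oplus \langle u_{r+1}\pi \rangle \oplus \cdots \oplus \langle u_n \pi \rangle$ via \eqref{QuadraticFormsOverCompleteDiscretelyValuatedFields}; then $\Phi([B]) = ([\bar u_1 \oplus \cdots \oplus \bar u_r],\, [\bar u_{r+1} \oplus \cdots \oplus \bar u_n])$, and applying $\Psi$ returns a form Witt-equivalent to the original $B$ — again Hensel's lemma controls the ambiguity in re-lifting the $\bar u_i$.

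The step I expect to be the main obstacle is the surjectivity/injectivity bookkeeping in $\Psi \circ \Phi = \mathrm{id}$, and specifically making rigorous that the passage ``diagonalize $B$ over $\mathbb F$, reduce mod $\mathfrak p$, re-lift'' recovers $[B]$ exactly rather than merely up to some undetermined correction. The subtlety is that Proposition~\ref{PropositionAnisotropicForms} characterizes anisotropy of the reduced forms but one needs the finer fact that the isomorphism type over $\mathbb F$ of an anisotropic form is determined by the pair of reduced forms over $\overline{\mathbb F}$ together with the Witt-cancellation theorem over $\mathbb F$ (Theorem~\ref{TheoremOnUniqueAnisotropicRepresentatives}); this is really the Springer decomposition theorem. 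I would either invoke it directly with the citation to \cite{Lam} and \cite{MilnorHusemoller} already present in the statement, or unwind it by: (i) showing $\Phi$ is surjective, which is immediate since $\Phi \circ \Psi = \mathrm{id}$ forces $\Phi$ surjective; and (ii) showing $\Phi$ is injective by a dimension/cancellation argument — if $\Phi([B]) = 0$ then both reduced forms are metabolic (equivalently zero in $W(\overline{\mathbb F})$, equivalently sums of hyperbolic planes), so each $\langle u_i \rangle$-block and each $\langle u_i\pi\rangle$-block pairs off into reduced hyperbolic planes, which lift to hyperbolic planes over $\mathbb F$ (again Hensel), forcing $[B] = 0$. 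Assembling these two directions gives the isomorphism, and I would remark explicitly that the argument collapses in characteristic $2$ precisely because the Hensel-squareness of $1 + \mathfrak p$ fails and because metabolic forms need not be sums of hyperbolic planes, which is why the theorem only claims $\partial^1, \partial_\pi^2$ are homomorphisms in the dyadic case.
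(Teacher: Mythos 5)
The paper offers no proof of this statement at all---it is quoted with citations to \cite{Lam} and \cite{MilnorHusemoller}---so your sketch must stand on its own, and it has a genuine gap at exactly the point where the real work lies: the well-definedness of $\partial^1$ and $\partial_\pi^2$ on $W(\mathbb F)$. Your architecture (define the maps on one-dimensional generators, then build an inverse from Hensel lifts of units) is the standard one, and your surjectivity/injectivity endgame is fine \emph{granted} well-definedness. But you reduce well-definedness under isometry to ``the uniqueness statements in \eqref{QuadraticFormsOverCompleteDiscretelyValuatedFields}'', and that display asserts only the \emph{existence} of a diagonalization $\langle u_1\rangle\oplus\dots\oplus\langle u_r\rangle\oplus\langle u_{r+1}\pi\rangle\oplus\dots\oplus\langle u_n\pi\rangle$; it contains no claim that the Witt classes of the two residue forms are independent of the diagonalization chosen. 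Proposition \ref{PropositionAnisotropicForms} does not supply this either: it characterizes when such a form is anisotropic, but says nothing about two different diagonalizations of one and the same form, which really can shuffle entries between the unit block and the $\pi$-block (for instance $\mathbb H\cong\langle 1\rangle\oplus\langle -1\rangle\cong\langle \pi\rangle\oplus\langle -\pi\rangle$); showing that the pair of residue Witt classes is nevertheless unchanged is precisely the content of Springer's theorem, not a consequence of anything quoted. The same issue infects the homomorphism claim itself (a Witt class has many expressions as a sum of one-dimensional generators), and the items (i)--(ii) you offer as the ``unwinding'' address injectivity and surjectivity of $\Phi$, not this prior well-definedness, so they do not close the gap.

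The standard repairs are available with the tools already in the paper. Either (a) verify the two residue maps on the presentation of $W(\mathbb F)$ by generators $\langle a\rangle$ subject to the relations $\langle a\rangle\oplus\langle -a\rangle=0$ and $\langle a\rangle\oplus\langle b\rangle\cong\langle a+b\rangle\oplus\langle ab(a+b)\rangle$ for $a+b\ne 0$ (these are exactly the moves the paper invokes, with the same citation to \cite{MilnorHusemoller}, in the proof of Theorem \ref{Consequence1}); compatibility with the second relation is a finite case analysis on the parities of the $\mathfrak p$-adic orders of $a$, $b$, $a+b$, using the ultrametric inequality together with your Hensel observation that a unit congruent to a square modulo $\mathfrak p$ is a square. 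Or (b) prove Springer's uniqueness directly: if $B_1\oplus\pi B_2\cong B_1'\oplus\pi B_2'$ with all entries units and both sides anisotropic, then $\bar B_1\cong\bar B_1'$ and $\bar B_2\cong\bar B_2'$; combined with Theorem \ref{TheoremOnUniqueAnisotropicRepresentatives} this yields well-definedness on Witt classes. Finally, note that the operative hypothesis throughout is that the \emph{residue} field is nondyadic---your Hensel step already uses char$(\overline{\mathbb F})\ne 2$---and this is how the hypothesis in the statement must be read: for $\mathbb F=\mathbb Q_2$ one has char$(\mathbb F)=0$ yet the isomorphism fails, so your closing remark about the collapse ``in characteristic $2$'' should be phrased in terms of $\overline{\mathbb F}$, not $\mathbb F$.
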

%
%%%%%%%%%%%%%%%%%%%%%%%%%%%%%%%%%%%%%%%%
%%%%%%%%%%%%%%%%%%%%%%%%%%%%%%%%%%%%%%%%
%%%%%%%%%%%%%%%%%%%%%%%%%%%%%%%%%%%%%%%%
%%%%%%%%%%%%%%%%%%%%%%%%%%%%%%%%%%%%%%%%
\subsubsection{The Witt ring $W(\mathbb Q)$}
%%%
%%%
For a prime $p$, consider the uniformizer $\pi = p$ of $\mathbb Q_p$ and let $\partial^2_p :W(\mathbb Q_p) \to W(\mathbb F_p)$ denote the second residue homomorphism from \eqref{EquationTheResidueHomomorphisms}. The inclusion $\iota_p:\mathbb Q \to \mathbb Q_p$ allows us to view a bilinear form $B$ over $\mathbb Q$ as a form over $\mathbb Q_p$, leading to a map of the same name $\iota_p:W(\mathbb Q) \to W(\mathbb Q_p)$. We define a third map $\partial_p :W(\mathbb Q) \to W(\mathbb F_p)$ as the composition of the first two, that is  $\partial _p = \partial_p^2\circ \iota_p$, and we let 
\begin{equation} \label{EquationTheMapDel}
\partial:W(\mathbb Q)  \to \bigoplus _{p \text{ prime}} W(\mathbb F_p), \qquad \qquad \partial = \oplus _p \partial _p, 
\end{equation}
%%%
%%%
denote the direct sum of all the maps $\partial _p$. With this understood, the next theorem explicitly describes $W(\mathbb Q)$. 
%%%
%%%
\begin{theorem}[\cite{Lam}, Page 175] \label{TheoremSplitExactSequenceForWittGroupOfRationals}
The sequence 
\begin{equation} \label{SESForWQ}
0\to \mathbb Z\stackrel{\iota}{\longrightarrow} W(\mathbb Q) \stackrel{\partial}{\longrightarrow} \left(\bigoplus _{p \text{ prime}} W(\mathbb F_p) \right) \to 0
\end{equation}
with $\iota (n) = n\cdot [ 1 ]$ and with $\partial$ as in \eqref{EquationTheMapDel}, is split exact. A splitting map $\sigma:W(\mathbb Q) \to \mathbb Z$ is given by the signature function $\sigma$. 
\end{theorem}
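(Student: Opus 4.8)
The plan is to establish the four assertions separately: that $\iota$ is injective, that the signature $\sigma$ splits the sequence, that $\mathrm{im}\,\iota=\ker\partial$, and that $\partial$ is surjective. The splitting and injectivity come for free: since a metabolic form has vanishing signature, $\sigma$ descends to a group homomorphism $\sigma:W(\mathbb Q)\to\mathbb Z$, and a direct computation gives $\sigma(\iota(n))=\sigma(n\cdot[1])=n$, so $\sigma\circ\iota=\mathrm{id}_{\mathbb Z}$. This simultaneously shows $\iota$ is injective and exhibits $\sigma$ as a retraction, whence the sequence splits as soon as it is exact in the middle. That $\mathrm{im}\,\iota\subseteq\ker\partial$ is equally immediate: by the defining formulas \eqref{EquationTheResidueHomomorphisms} the second residue homomorphism annihilates every unit class, and $[1]$ is represented over each $\mathbb Q_p$ by a unit, so $\partial_p([1])=0$ for all $p$.

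For surjectivity I would realize each single-prime generator of $\bigoplus_p W(\mathbb F_p)$ in the image. Since $\langle p\rangle$ is a unit at every prime $q\neq p$ and equals $p$ times a unit at $p$, equation \eqref{EquationTheResidueHomomorphisms} gives $\partial_p([\langle p\rangle])=[1]$ and $\partial_q([\langle p\rangle])=0$ for $q\neq p$; this realizes the generator $[1]\in W(\mathbb F_p)$ at each prime. To realize a nonsquare class $[\beta]\in W(\mathbb F_p)$ for odd $p$ (for $p=2$ the group $W(\mathbb F_2)$ is generated by $[1]$) I would use $\langle -p\rangle$ when $p\equiv3\pmod4$, since then $-1$ is a nonsquare modulo $p$. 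When $p\equiv1\pmod4$ I would instead choose, by Dirichlet's theorem applied to compatible congruences modulo $4p$, a prime $q\equiv3\pmod4$ that is a nonsquare modulo $p$; then $\partial_p([\langle pq\rangle])=[\beta]$, while the only other nonzero residue $\partial_q([\langle pq\rangle])=[\bar p]$ is a class at $q$ already known to lie in the image (because $q\equiv3\pmod4$), and subtracting it isolates $[\beta]$ at $p$. As the image is a subgroup containing every single-prime generator, $\partial$ is onto.

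The crux is the reverse inclusion $\ker\partial\subseteq\mathrm{im}\,\iota$. Given $[B]\in\ker\partial$, set $n=\sigma(B)$ and pass to $[C]=[B]-n\cdot[1]$, which still has all residues zero and now has $\sigma(C)=0$; it suffices to prove $[C]=0$. Replacing $C$ by its anisotropic Witt representative (Theorem \ref{TheoremOnUniqueAnisotropicRepresentatives}) and diagonalizing it over $\mathbb Q$ with squarefree integer entries (Corollaries \ref{CorollaryOnDiagonalizabilityOfAnisotropicForms} and \ref{CorollaryWittRingIsGeneratedByOneDimensionalForms}), I would induct on the largest prime $p$ dividing any entry $a_i$. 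If no prime occurs, all entries are $\pm1$ and $[C]=\sigma(C)\cdot[1]=0$. Otherwise $\partial_p(C)=0$ forces the residue form $\bigoplus_{p\mid a_i}\langle \overline{a_i/p}\rangle$ over $\mathbb F_p$ to vanish in $W(\mathbb F_p)$, i.e.\ to be metabolic (Theorem \ref{FormsOverFiniteFields}), so after reindexing the entries $a_i=pb_i$ divisible by $p$ pair into binary blocks $\langle pb,pb'\rangle$ with $-\overline{bb'}$ a square in $\mathbb F_p$; by Theorem \ref{TheoremCharacterizingSmallRankFormsOverQp}(i) each such block is then isotropic, hence universal, over $\mathbb Q_p$. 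The plan is to replace each block, up to isomorphism over $\mathbb Q$, by a binary form $\langle c, bb'c\rangle$ with $p\nmid c$, thereby eliminating $p$ and lowering the induction parameter without changing the Witt class. This reduction requires producing a value $c$ coprime to $p$ that is represented by $\langle pb,pb'\rangle$ over $\mathbb Q$, equivalently a $c$ for which the ternary form $\langle pb,pb',-c\rangle$ is isotropic at every place; by Corollary \ref{HasseMinkovskiOverQ} this is a finite list of local square-class conditions, which one attempts to solve by weak approximation.

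The main obstacle is exactly this last existence step: the finitely many local conditions on $c$ are not visibly consistent, and their simultaneous solvability rests on Hilbert reciprocity (the product formula $\prod_v(a,b)_v=1$), which is the genuine global input hiding inside the Hasse--Minkowski principle of Theorem \ref{HasseMinkowski}. I expect the bookkeeping of the square classes at the bad primes dividing $2bb'$, together with the sign condition at the infinite place, to be the delicate part, while everything else is formal manipulation of residues and signatures. An expedient alternative, consistent with the citation, is to take the exactness at $W(\mathbb Q)$ directly from \cite{Lam} and reserve the argument above as motivation.
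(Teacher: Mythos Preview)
The paper does not give a proof; it imports the result from \cite{Lam}, exactly as your closing sentence proposes. In that sense your proposal and the paper agree. (Your surjectivity construction is also close to what the paper later records, for computational purposes, in Proposition~\ref{PropsitionPreimagesOfDel}.)

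Since you also outline an argument, let me point out where it breaks. The easy parts---injectivity of $\iota$, the splitting by $\sigma$, the inclusion $\mathrm{im}\,\iota\subseteq\ker\partial$, and surjectivity of $\partial$---are fine. The problem is the induction for $\ker\partial\subseteq\mathrm{im}\,\iota$. You induct on the largest prime $p$ dividing any diagonal entry, and for each block $\langle pb,pb'\rangle$ with $-bb'$ a square modulo $p$ you replace it by $\langle c,bb'c\rangle$ with $p\nmid c$. That condition on $c$ is actually easy to arrange (the block is hyperbolic over $\mathbb Q_p$, and an elementary Hensel-type tweak of a solution to $bt^2+b'\equiv 0\pmod p$ produces a represented $p$-unit over $\mathbb Q$; no reciprocity is needed), so the difficulty you flag is not the real one. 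The real one is that nothing stops $c$ from picking up a prime factor \emph{larger} than $p$, so your induction parameter can increase and the induction need not terminate. What you would actually need is a $c$ whose prime factors are all at most $p$, and weak approximation does not give that. The textbook proof in \cite{Lam} and \cite{MilnorHusemoller} sidesteps this by filtering $W(\mathbb Q)$ by the subrings $L_k$ generated by $[1],\dots,[k]$, using $[a][b]=[ab]$ to see $L_k=L_{k-1}$ for composite $k$, and then checking $L_p/L_{p-1}\cong W(\mathbb F_p)$ via repeated use of the additive identity $[a]+[b]=[a+b]+[ab(a+b)]$; this keeps all manipulations inside $L_p$ and is entirely elementary.
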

%%%
%%%
This completely determines $W(\mathbb Q)$ as 
\begin{equation} \label{EquationDeterminingWOfQ}
W(\mathbb Q) \cong \mathbb Z\oplus   \left(\bigoplus _{p \text{ prime}} W(\mathbb F_p) \right)
\end{equation}
with the map from $W(\mathbb Q)$ onto the $\mathbb Z$-summand given by the signature function. The Witt rings of the fields $\mathbb F_p$ are well understood. 
%%%
%%%
\begin{theorem}
Let $p$ be a prime integer, then there are isomorphisms of Abelian groups:
$$W(\mathbb F_p) \cong \left\{
\begin{array}{cl}
\mathbb F_2 & \quad ; \quad p=2 \cr
\mathbb F_2 \oplus \mathbb F_2 & \quad ; \quad p\equiv 1 \pmod 4,  \cr
\mathbb F_4 & \quad ; \quad p\equiv 3 \pmod 4
\end{array}
\right.$$
The generators of the groups on the right hand sides are given by $[1]$ in the case of $p=2$ or $p\equiv 3 \pmod 4$, and by $[1]$ and $[\beta ]$ in the case of $p\equiv 1 \pmod 4$ (where $\beta \in \mathbb F_p^\ast$ is any non-square). 
\end{theorem}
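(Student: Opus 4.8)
The plan is to combine the classification of non-degenerate symmetric bilinear forms over $\mathbb F_p$ with the fact (Corollary~\ref{CorollaryWittRingIsGeneratedByOneDimensionalForms}) that $W(\mathbb F_p)$ is additively generated by the Witt classes $[a]$ of the rank-one forms $\langle a\rangle$, $a\in\mathbb F_p^\ast$. In each of the three cases I would first read off a set of additive generators of $W(\mathbb F_p)$, then establish the relations among them by exhibiting explicit metabolic forms, and finally rule out any further relations by exhibiting explicit anisotropic forms. Two elementary observations get used throughout: an anisotropic form of positive rank is never metabolic, hence represents a nonzero element of $W(\mathbb F_p)$ (in particular any odd-rank form is nonzero); and a non-degenerate rank-two form is metabolic exactly when it is isotropic, so that $[a]+[b]=0$ in $W(\mathbb F_p)$ whenever $\langle a\rangle\oplus\langle b\rangle$ is isotropic. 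Only the additive structure needs to be tracked, since the asserted isomorphisms are of abelian groups.

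For $p=2$ one has $\mathbb F_2^\ast=\{1\}$, so $\langle 1\rangle$ is the only rank-one form and $W(\mathbb F_2)$ is cyclic on $[1]$; here $[1]\ne 0$ since $\langle 1\rangle$ has odd rank, while $2[1]=0$ since $\langle 1\rangle\oplus\langle 1\rangle$ has the isotropic vector $(1,1)$ spanning a maximal totally isotropic line and so is metabolic. Hence $W(\mathbb F_2)\cong\mathbb Z/2\mathbb Z\cong\mathbb F_2$. For $p\equiv 1\pmod 4$ fix a non-square $\beta$; by Theorem~\ref{FormsOverFiniteFields}(a) every form over $\mathbb F_p$ is $\cong n\cdot\langle 1\rangle$ or $\cong(n-1)\cdot\langle 1\rangle\oplus\langle\beta\rangle$, so $W(\mathbb F_p)$ is generated by $[1]$ and $[\beta]$, both nonzero by Theorem~\ref{FormsOverFiniteFields}(c). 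Since $-1$ is a square here, $\langle 1\rangle\oplus\langle 1\rangle\cong\langle 1\rangle\oplus\langle -1\rangle$ and $\langle\beta\rangle\oplus\langle\beta\rangle\cong\langle\beta\rangle\oplus\langle -\beta\rangle$ are isotropic, hence metabolic, so $2[1]=2[\beta]=0$ and every Witt class lies in $\{0,[1],[\beta],[1]+[\beta]\}$. These four elements are pairwise distinct: $[1]+[\beta]=[\langle 1\rangle\oplus\langle\beta\rangle]\ne 0$ because $\langle 1\rangle\oplus\langle\beta\rangle$ is anisotropic by Theorem~\ref{FormsOverFiniteFields}(d)(i), and this together with $[1]\ne 0$ and $[\beta]\ne 0$ separates them all (e.g.\ $[1]=[\beta]$ would force $0=[1]+[\beta]$). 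Hence $W(\mathbb F_p)\cong\mathbb F_2\oplus\mathbb F_2$.

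For $p\equiv 3\pmod 4$ the element $-1$ is a non-square, so $\beta=-1$ is admissible and $\langle 1\rangle\oplus\langle\beta\rangle=\langle 1\rangle\oplus\langle -1\rangle$ is metabolic; thus $[\beta]=-[1]$ and $W(\mathbb F_p)$ is cyclic on $[1]$, and it only remains to compute the order of $[1]$. First $2[1]=[\langle 1\rangle\oplus\langle 1\rangle]\ne 0$, since $\langle 1\rangle\oplus\langle 1\rangle$ is anisotropic by Theorem~\ref{FormsOverFiniteFields}(d)(ii). Next $4[1]=0$: the rank-four form $\langle 1\rangle^{\oplus 4}$ has discriminant $1$ and is isotropic by Theorem~\ref{FormsOverFiniteFields}(b), so by Corollary~\ref{CorollaryDecompositionOfIsotropicFormsOverNondyadicFields} it is isomorphic to $\mathbb H^{k}\oplus B_{an}$ with $B_{an}$ anisotropic of rank $4-2k$ and $k\ge 1$; since $\mathbb F_p$ carries no anisotropic form of rank $\ge 3$ (Theorem~\ref{FormsOverFiniteFields}(b)), the possibility $k=1$ would make $B_{an}$ an anisotropic rank-two form, hence $\cong\langle 1\rangle\oplus\langle 1\rangle$ by Theorem~\ref{FormsOverFiniteFields}(a),(d)(ii) and thus of square discriminant, whereas multiplicativity of discriminants in $\langle 1\rangle^{\oplus 4}\cong\mathbb H\oplus B_{an}$ together with $d\mathbb H=-1$ forces $dB_{an}=-1$, a non-square. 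Therefore $k=2$, $B_{an}=0$, and $\langle 1\rangle^{\oplus 4}\cong\mathbb H^2$ is metabolic. Consequently $[1]$ has order exactly $4$, so $W(\mathbb F_p)$ is cyclic of order $4$, with sole generator $[1]$ (and in the previous two cases the generating sets $\{[1]\}$, resp.\ $\{[1],[\beta]\}$, used above are the asserted ones).

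The step I expect to be the main obstacle is the verification that $4[1]=0$ when $p\equiv 3\pmod 4$: unlike every other relation required, it cannot be seen by simply displaying an isotropic vector, but needs the Witt decomposition of Corollary~\ref{CorollaryDecompositionOfIsotropicFormsOverNondyadicFields} together with the classification of forms over $\mathbb F_p$ and a short discriminant count. Everything else reduces to checking (an)isotropy of forms of rank at most two, which is immediate from Theorem~\ref{FormsOverFiniteFields}.
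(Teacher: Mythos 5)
Your argument is correct, and note that the paper does not actually prove this statement: it is quoted as standard background from the quadratic forms literature, so there is no in-text argument to compare against, and your derivation from Corollary \ref{CorollaryWittRingIsGeneratedByOneDimensionalForms}, Theorem \ref{TheoremOnUniqueAnisotropicRepresentatives}, Theorem \ref{FormsOverFiniteFields} and Corollary \ref{CorollaryDecompositionOfIsotropicFormsOverNondyadicFields} is self-contained and sound. The individual steps all check out: rank parity shows odd-rank classes are nonzero, an isotropic non-degenerate rank-two form is metabolic, and your discriminant count excluding $k=1$ in $\langle 1\rangle^{\oplus 4}\cong\mathbb H^{k}\oplus B_{an}$ is valid (a shortcut for that step: by the rank-plus-discriminant classification of Theorem \ref{FormsOverFiniteFields}(a), $\langle 1\rangle\oplus\langle 1\rangle\cong\langle -1\rangle\oplus\langle -1\rangle$, so $2[1]=-2[1]$ and hence $4[1]=0$ immediately). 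One discrepancy should have been flagged explicitly: for $p\equiv 3\pmod 4$ you prove that $W(\mathbb F_p)$ is cyclic of order four, i.e.\ isomorphic to $\mathbb Z/4\mathbb Z$, whereas the statement as written asserts an isomorphism of abelian groups with $\mathbb F_4$, whose additive group is the Klein four-group. Your observation that $2[1]=[\langle 1\rangle\oplus\langle 1\rangle]\ne 0$ shows the literal reading is impossible, and the statement's own claim that $[1]$ generates confirms that the cyclic group of order four is what is intended; so the ``$\mathbb F_4$'' is a misprint for $\mathbb Z/4\mathbb Z$, and what you have proved is the correct (and classical) version of the theorem rather than the literal one.
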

%%%
%%%
\begin{remark}
When working with a prime $p\equiv 1 \pmod 4$, we prefer to pick $\beta$ to be the least non-square in $\mathbb F_p$. For example we pick $\beta = 2$ if $p=5$, or $\beta = 3$ if $p=41$.
\end{remark}
%%%
%%%
\begin{example}
Let $B=\langle -15\rangle \oplus \langle 35\rangle \oplus \langle 21\rangle$.  Under the isomorphism \eqref{EquationDeterminingWOfQ}, $[B]$ maps to $1\in \mathbb Z$ and to $0 \in W(\mathbb F_p)$ for all $p\notin \{3,5, 7\}$. Additionally,  
%%%
\begin{align*}
\partial _3([B]) & = [-5] \oplus [7] = [1] \oplus [1]  \in W(\mathbb F_3), \cr   
\partial _5([B]) & = [-3] \oplus [7] = [ 2] \oplus [2] = 0 \in W(\mathbb F_5), \cr
\partial _7([B]) & = [5]\oplus [3] = [1] \oplus[1] \in W(\mathbb F_7).
\end{align*}
\end{example}
%%%%%%%%%%%%%%%%%%%%%%%%%%%%%%%%%%%%%%%%%%%%%%%%%%%%%
%%%%%%%%%%%%%%%%%%%%%%%%%%%%%%%%%%%%%%%%%%%%%%%%%%%%%
%%%%%%%%%%%%%%%%%%%%%%%%%%%%%%%%%%%%%%%%%%%%%%%%%%%%%
%%%%%%%%%%%%%%%%%%%%%%%%%%%%%%%%%%%%%%%%%%%%%%%%%%%%%
%%%%%%%%%%%%%%%%%%%%%%%%%%%%%%%%%%%%%%%%%%%%%%%%%%%%%
%%%%%%%%%%%%%%%%%%%%%%%%%%%%%%%%%%%%%%%%%%%%%%%%%%%%%
\section{Proofs} \label{SectionOnProofs}
%%%
%%%
\subsection{The Witt class of a knot} \label{SetcionWittClassOfAKnot}
Let $K$ be an oriented knot and let $S\subset S^3$ be a Seifert surface for $K$. We choose to think of the orientation on $S$ as given by a normal vector field along $S$. Associated to $S$ is the bilinear form 
$$V_S:H_1(S;\mathbb Z)\times H_1(S;\mathbb Z) \to \mathbb Z$$
defined by 
$$V_S(\alpha,\beta) = \ell k (a, b^+),$$
where $a$ and $b$ are curves on $S$ representing the homology classes $\alpha$ and $\beta$ respectively, and $b^+$ is a small push-off of $b$ from $S$ in the positive normal direction on $S$ given by its orientation, performed so that $a$ and $b^+$ are disjoint. Lastly, $\ell k (a, b^+)$ denotes the linking number of $a$ and $b^+$. Let $V_S^\tau$ denote the bilinear form on $H_1(S;\mathbb Z)$ given by $V_S^\tau (\alpha,\beta) = V_S(\beta,\alpha)$, then $V_S+V_S^\tau$ is a non-degenerate symmetric bilinear form on $H_1(S;\mathbb Z)$. The equivalence class in $W(\mathbb Q)$ of the extension of this form to $H_1(S;\mathbb Q)$, is defined to be the {\em Witt class $W(K)$ of the knot $K$}, that is 
$$W(K) = [( H_1(S;\mathbb Q), V_S+V_S^\tau)]. $$
The fact that this is well defined and independent from $S$, follows the fact that any two Seifert surfaces for $K$ are related by a finite number of handle additions/subtractions. Each handle addition to a Seifert surface $S$ changes the form $V_S+V_S^\tau$ to  $(V_S+V_S^\tau)\oplus \mathbb H$ \cite{Murasugi}, and thus the two forms generate the same Witt class in $W(\mathbb Q)$.  
%%%
%%%
\begin{theorem} Let $K$ be an oriented knot. Then 
\begin{itemize}
\item[(i)] $W(K)$ is an invariant of algebraic concordance. 
\item[(ii)] $W(K_1\#K_2) = W(K_1) \oplus W(K_2)$. 
\item[(iii)] $W(K') = W(K)$ where $K'$ is the knot $K$ with the opposite orientation. 
\end{itemize}
\end{theorem}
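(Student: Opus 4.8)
The plan is to verify each of the three properties by appealing to classical facts about Seifert surfaces and the Seifert form, then translating the conclusion into the language of the Witt ring $W(\mathbb Q)$. The core principle is that $V_S + V_S^\tau$ only ever changes by adding a hyperbolic summand $\mathbb H$ under the moves we perform on surfaces or cobordisms, and $\mathbb H$ is metabolic, hence represents $0$ in $W(\mathbb Q)$.

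For part (ii), I would pick Seifert surfaces $S_1$ and $S_2$ for $K_1$ and $K_2$ and form the boundary-connected sum $S = S_1 \natural S_2$, which is a Seifert surface for $K_1 \# K_2$. One then has $H_1(S;\mathbb Z) \cong H_1(S_1;\mathbb Z) \oplus H_1(S_2;\mathbb Z)$, and because curves on $S_1$ and curves on $S_2$ can be isotoped into disjoint balls, the linking numbers between the two groups of cycles vanish; hence $V_S = V_{S_1} \oplus V_{S_2}$ as a block-diagonal form, and likewise $V_S + V_S^\tau = (V_{S_1} + V_{S_1}^\tau) \oplus (V_{S_2} + V_{S_2}^\tau)$. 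Passing to $\mathbb Q$ coefficients and then to Witt classes gives $W(K_1 \# K_2) = W(K_1) \oplus W(K_2)$. For part (iii), reversing the orientation of $K$ can be realized by keeping the same surface $S$ but reversing its normal vector field; this swaps $b^+$ for $b^-$, which replaces $V_S$ by $V_S^\tau$ (equivalently $\ell k(a,b^-) = \ell k(a^+,b) = V_S(\beta,\alpha)$). Since $(V_S^\tau) + (V_S^\tau)^\tau = V_S^\tau + V_S = V_S + V_S^\tau$ is literally the same symmetric form, the Witt class is unchanged.

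For part (i), the statement is that if $K_0$ and $K_1$ are algebraically concordant — i.e. their Seifert forms are cobordant in the sense that for suitable Seifert surfaces there is a metabolizer, or more invariantly that $V_{S_0} \oplus (-V_{S_1})$ admits a half-dimensional subspace on which it vanishes — then $W(K_0) = W(K_1)$. I would use the standard fact (Levine, Murasugi) that algebraic concordance of $V_0$ and $V_1$ implies $V_0 \oplus (-V_1)$ is null-cobordant, and that a null-cobordant Seifert form $V$ has $(V + V^\tau)$ metabolic (the metabolizer for $V$ is a totally isotropic subspace of half the dimension for $V + V^\tau$ as well). Then $(V_{S_0} + V_{S_0}^\tau) \oplus (-(V_{S_1} + V_{S_1}^\tau))$ is metabolic, so it represents $0$ in $W(\mathbb Q)$, i.e. $W(K_0) = W(K_1)$. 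Since (smooth or topological) concordance implies algebraic concordance, this also yields that $W(K)$ is a concordance invariant, as was asserted in the introduction.

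The main obstacle is part (i): one must be careful that ``algebraic concordance'' is being used with the same meaning as in Levine's work, and that the passage from a metabolizer for the Seifert form $V$ to a totally isotropic subspace for the symmetric form $V + V^\tau$ is spelled out — the point being that if $W \subseteq H_1(S;\mathbb Q)$ satisfies $W = W^\perp$ with respect to $V$ (so $V(W,W) = 0$), then $(V+V^\tau)(W,W) = 0$ as well, and $\dim W$ is half of $\dim H_1(S;\mathbb Q)$, which is exactly the metabolic condition. Parts (ii) and (iii) are essentially bookkeeping with block matrices and the geometric realization of the relevant operations, and should be dispatched quickly.
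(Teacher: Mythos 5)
Your argument is correct, and in fact the paper offers no proof of this theorem at all: it is stated as a collection of classical facts about the Seifert form and Levine's algebraic concordance group, so there is no in-paper argument to compare against. What you wrote is the standard justification and it works. Two details are worth making explicit if this were written out. First, in part (i), passing from a metabolizer of the (non-symmetric) form $V_{S_0}\oplus(-V_{S_1})$ to a totally isotropic half-dimensional subspace of its symmetrization is exactly as you say, but to invoke the word \emph{metabolic} in the sense used for Witt rings you also need the symmetrized form to be non-degenerate; this holds because its determinant is $\pm\det(K_0)\det(K_1)\ne 0$. You then correctly use that over $\mathbb Q$ the relation $[B_0]=[B_1]$ in $W(\mathbb Q)$ is equivalent to $B_0\oplus(-B_1)$ being metabolic, with $[(V,-B)]=-[(V,B)]$. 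Second, part (ii) via the boundary-connected sum of Seifert surfaces (block-diagonal linking form, since the two surfaces can be separated by a sphere) and part (iii) via reversing the normal direction, which replaces $V_S$ by $V_S^\tau$ and hence leaves $V_S+V_S^\tau$ literally unchanged, are exactly the standard computations and need no further care.
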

%%%%%%%%%%%%%%%%%%%%%%%%%%%%%%%%%%%%%%%%
%%%%%%%%%%%%%%%%%%%%%%%%%%%%%%%%%%%%%%%%
%%%%%%%%%%%%%%%%%%%%%%%%%%%%%%%%%%%%%%%%
%%%%%%%%%%%%%%%%%%%%%%%%%%%%%%%%%%%%%%%%
\subsection{Proof of Theorem \ref{main}} \label{SectionProofOfMainTheorem}
%%%
%%%
Extending Definition \ref{WittSpan}, we define the {\em Witt span $ws([B])$} for any class $[B]\in W(\mathbb Q)$ associated to a non-degenerate, symmetric bilinear form $B$, to be 
$$ws([B]) =  \min \{ \text{rank} (B')\, |\, \partial [B'] = \partial [B]\}.$$
\begin{lemma} \label{LemmaBoundOnWSFromAboveByThree}
For every non-degenerate symmetric bilinear form $B$ over $\mathbb Q$, we obtain $ws([B])\le 3$. 
\end{lemma}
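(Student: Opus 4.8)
The strategy is to show that the image $\partial([B]) \in \bigoplus_p W(\mathbb F_p)$ can be realized by a form $B'$ over $\mathbb Q$ of rank at most $3$. Since $\partial([B])$ lies in a direct sum, only finitely many components $\partial_p([B])$ are nonzero; call this finite set of primes $S$. By Theorem \ref{TheoremSplitExactSequenceForWittGroupOfRationals} the sequence \eqref{SESForWQ} is split exact, so every element of $\bigoplus_p W(\mathbb F_p)$ is $\partial$ of some Witt class over $\mathbb Q$, meaning $ws([B])$ is at least finite; the content is the bound by $3$. First I would pick any form $B_0$ over $\mathbb Q$ with $\partial([B_0]) = \partial([B])$, and then successively reduce its rank modulo the subgroup $\iota(\mathbb Z) = \mathbb Z\cdot[1]$ until it has rank $\le 3$, using the fact that adding or subtracting copies of $\langle 1\rangle$ and $\langle -1\rangle$ changes neither $\partial$ nor, up to Witt equivalence, the rank in an essential way. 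Concretely, by Corollary \ref{CorollaryWittRingIsGeneratedByOneDimensionalForms} I may take $B_0 = \langle a_1\rangle\oplus\dots\oplus\langle a_m\rangle$ anisotropic over $\mathbb Q$.

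The key point is that an anisotropic form over $\mathbb Q$ of rank $\ge 5$ does not exist in the indefinite case (Theorem \ref{TheoremMeyer}), but a definite form of large rank certainly is anisotropic, so I cannot literally bound the rank of an anisotropic representative by $4$. Instead I work $p$-locally. Over each $\mathbb Q_p$ with $p\in S$, the class $\partial_p([B])$ lifts to a form of small rank over $\mathbb Q_p$; the cleanest route is: since $u(\mathbb Q_p) = 4$ (Theorem \ref{IsotropicFormsOfDimension5}(a)), every Witt class over $\mathbb Q_p$ has an anisotropic representative of rank $\le 4$, and because $\partial_p^2: W(\mathbb Q_p)\to W(\mathbb F_p)$ kills the ``unit part'' while the ``$\pi$-part'' has rank $\le 2$ over $\mathbb F_p$ (every form over $\mathbb F_p$ is Witt-equivalent to one of rank $\le 2$ by Theorem \ref{FormsOverFiniteFields}(b)), I can arrange a representative whose $\pi = p$-ramified part has rank $\le 2$. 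The difficulty is globalizing: I need a single form $B'$ over $\mathbb Q$, of rank $\le 3$, with the prescribed local residues at every $p\in S$ simultaneously and zero residue elsewhere.

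I would handle globalization via the splitting $\sigma$ and the structure \eqref{EquationDeterminingWOfQ}: $W(\mathbb Q)\cong \mathbb Z\oplus\bigoplus_p W(\mathbb F_p)$. Take the unique class $w\in W(\mathbb Q)$ with $\partial(w) = \partial([B])$ and $\sigma(w) = s$ for a small integer $s$ to be chosen; every such $w$ differs from a fixed lift by an element of $\iota(\mathbb Z)$, i.e. by copies of $[1]$. Let $B'$ be the anisotropic representative of $w$. If $B'$ is indefinite, Theorem \ref{TheoremMeyer} forces $\mathrm{rank}(B')\le 4$, and a parity/discriminant argument (the rank-$4$ anisotropic forms over $\mathbb Q_p$ from Theorem \ref{IsotropicFormsOfDimension5}(b) have $dB = 1$, and one checks the global rank-$4$ anisotropic forms are likewise constrained) lets me shave this to $3$ after possibly adjusting $s$ by $\pm 1$ or $\pm 2$ to break definiteness and control the discriminant; if instead the anisotropic representative would be definite of large rank, choosing $s$ with $|s|$ small relative to the local data makes $w$'s anisotropic representative indefinite, returning to the previous case.

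**Main obstacle.** The genuinely delicate step is the last one: passing from ``$ws \le 4$'' (which follows almost formally from $u(\mathbb Q_p)=4$ and Meyer's theorem once definiteness is broken) down to ``$ws \le 3$''. This requires exploiting that a rank-$4$ anisotropic form over $\mathbb Q_p$ has trivial discriminant (Theorem \ref{IsotropicFormsOfDimension5}(b)), so that after splitting off a suitable $\langle\pm 1\rangle$ one lands in rank $3$ without changing any $\partial_p$; I expect the bookkeeping to require Corollary \ref{CorollaryAboutAnisotropyOverQ2OfCertainRank4Forms} to manage the prime $2$ separately, since the residue homomorphism machinery of Proposition \ref{PropositionAnisotropicForms} is unavailable there and one must instead argue with Hasse symbols via Theorem \ref{TheoremCharacterizingSmallRankFormsOverQp}.
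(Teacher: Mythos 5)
Your argument up to the bound $ws([B])\le 4$ is sound and is in fact the same mechanism the paper uses: add a suitable $\langle \pm 1\rangle$ (which does not change any $\partial_p$, since $\pm 1$ is a unit at every prime and so has vanishing second residue) to break definiteness, invoke Meyer's Theorem \ref{TheoremMeyer} on the resulting indefinite form of rank $\ge 5$, and split off hyperbolic planes via Corollary \ref{CorollaryDecompositionOfIsotropicFormsOverNondyadicFields}. But the step you single out as the ``main obstacle'' --- getting from $4$ down to $3$ --- is exactly the content of the lemma, and you do not prove it: you only describe an expected strategy via discriminants of rank-$4$ anisotropic forms over $\mathbb Q_p$ (Theorem \ref{IsotropicFormsOfDimension5}(b)), a separate Hasse-symbol analysis at $p=2$, and an unverified claim that ``global rank-$4$ anisotropic forms are likewise constrained'' to have trivial discriminant. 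That claim is false as stated: any definite rank-$4$ form over $\mathbb Q$ is anisotropic and its discriminant can be an arbitrary square class (e.g. $\langle 1\rangle\oplus\langle 1\rangle\oplus\langle 1\rangle\oplus\langle 2\rangle$), and even indefinite anisotropic rank-$4$ forms over $\mathbb Q$ are only constrained at the finitely many primes where they are locally anisotropic. So the route you sketch has a genuine gap and, as outlined, would not go through without substantial repair.

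The repair is much simpler than what you anticipate, and needs none of the local discriminant or $\mathbb Q_2$ bookkeeping: it is one more application of the trick you already used. Take any rank-$4$ form $A$ with $\partial[A]=\partial[B]$ (which you have at this stage). Since $|\sigma(A)|\le 4$, at least one of $A\oplus\langle 1\rangle$, $A\oplus\langle -1\rangle$ is indefinite; it has rank $5$, hence is isotropic by Meyer's Theorem \ref{TheoremMeyer}, hence is isomorphic to $\mathbb H^{m}\oplus A'$ with $m\ge 1$ and $A'$ anisotropic of rank $5-2m\le 3$, and $\partial[A']=\partial[A]=\partial[B]$ because neither $\langle\pm 1\rangle$ nor $\mathbb H$ contributes to any $\partial_p$. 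This is precisely how the paper concludes; the whole $p$-local discussion in your second paragraph (controlling the rank of the $\pi$-part, $u(\mathbb Q_p)=4$, Theorem \ref{IsotropicFormsOfDimension5}(b)) is never actually needed for the upper bound $ws([B])\le 3$ --- it only explains, a posteriori, why the bound cannot be improved in general.
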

%%%
%%%
\begin{proof}
Let $A=\langle a_1\rangle \oplus \dots \oplus \langle a_n\rangle$ be a form with $[A] \in \partial^{-1}(\partial([B]))$, with $a_1,\dots, a_n\in \mathbb Q^\ast$, and with $n\ge 5$. We will show that in this case there exists a form $A'$ also with $[A']\in \partial^{-1}(\partial([B]))$ and of smaller rank. 

If $A$ is not definite, then it is isotropic by  Meyer's Theorem \ref{TheoremMeyer}. By Corollary \ref{CorollaryDecompositionOfIsotropicFormsOverNondyadicFields} we can write $A\cong \mathbb H^m \oplus A'$ with $m>0$ (since $A$ is isotropic) and with $A'$ an anisotropic form. Clearly then $[A]=[A']$ and so $\partial [A] = \partial [A']$, while rank$(A')\le $rank$(A)-2$. 

If $A$ is positive definite, let $\bar A=\langle -1\rangle \oplus A$ and if $A$ is negative definite let $\bar A =  \langle 1\rangle \oplus A$. Then  $\partial [\bar A] = \partial [A]$, $\bar A$ is indefinite, has rank $n+1$ and is thus isotropic by the preceding discussion. Accordingly $\bar A=\mathbb H^m \oplus A'$ with $m>0$ and with $A'$ an anisotropic form. Note that rank$(A')\le $rank$(A)-1$. 

In summary, if $A$ is a form of rank 5 or greater, then there exists a form $A'$ with $\partial [A'] = \partial [A]$, and with rank$(A') \le   \text{rank}(A)-2$ if $A$ is indefinite, and rank$(A') \le   \text{rank}(A)-1$ if $A$ is definite. This shows that $ws([B])\le 4$ for every form $B$. 

Lastly, if $A=  \langle a_1\rangle \oplus \langle a_2\rangle \oplus \langle a_3\rangle \oplus \langle a_4\rangle$ for some $a_1,\dots, a_4\in \mathbb Q^\ast$, then either $A_1 = \langle 1\rangle \oplus A$ or $A_{-1} = \langle -1\rangle \oplus A$ is an indefinite form, and  $\partial [A_{\pm 1}]=\partial [A]$. By the conclusion from the preceding paragraph, we can then find a form $A'$ of rank no more than 3 with $\partial [A']=\partial [A]$, showing that $ws([B]) \le 3$ for all forms $B$. 
\end{proof}
%%%
%%%
\begin{proposition} \label{PropositionLowerBoundOnGammaC}
Let $K$ be a knot, $\gamma_c(K)$ its concordance crosscap number and $ws(K)$ its Witt span. Then $ws(K) \le \gamma_c(K)$. 
\end{proposition}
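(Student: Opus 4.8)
The plan is to start from a concordance-minimizing knot $K'$ with $\gamma_3(K') = \gamma_c(K)$, and to exploit the fact that a minimal-genus nonorientable spanning surface for $K'$ produces a small-rank bilinear form whose Witt class agrees with $W(K')=W(K)$ after applying $\partial$. Concretely, let $F \subset S^3$ be a nonorientable surface with $\partial F = K'$ and $b_1(F) = \gamma_3(K') = \gamma_c(K)$. Associated to $F$ is its symmetric Gordon--Litherland pairing (or equivalently the form $V_F + V_F^\tau$ built from a push-off using the orientation double cover), a symmetric bilinear form $G_F$ on $H_1(F;\mathbb Z) \cong \mathbb Z^{b_1(F)}$; over $\mathbb Q$ this is non-degenerate and has rank exactly $b_1(F) = \gamma_c(K)$. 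The first step is therefore to recall or cite the relevant facts about $G_F$: that it is well-defined, non-degenerate over $\mathbb Q$, of rank $b_1(F)$, and that its Witt class is controlled by the knot.

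The second and central step is to relate $[G_F] \in W(\mathbb Q)$ to $W(K) = W(K')$. One cannot expect equality in $W(\mathbb Q)$ — the signatures differ (the Gordon--Litherland form of a nonorientable surface carries an Euler-number correction term $e(F)/2$), so $\sigma([G_F])$ need not equal $\sigma(K')$. However, the images under $\partial$ should agree, or differ only by something $\partial$ kills. The cleanest route: pass to a Seifert surface $S$ for $K'$ and a nonorientable surface $F$ for $K'$ and compare the two linking forms directly, using that $S$ and $F$ are both spanning surfaces for the same knot and the standard formula relating the Gordon--Litherland form of $F$ to $V_{K'}+V_{K'}^\tau$ after stabilization. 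Equivalently, one invokes that $2\cdot[\text{symmetrized Seifert form}]$ and the Gordon--Litherland form of any spanning surface have the same image in $\bigoplus_p W(\mathbb F_p)$ because they present (up to congruence and stabilization) isometric forms over $\mathbb Z[1/2]$ — hence over each $\mathbb Q_p$ with $p$ odd the residue homomorphism $\partial_p$ sees the same class, and $\partial_2$ of a $\mathbb Z[1/2]$-unimodular form vanishes on the relevant part. Once $\partial([G_F]) = \partial(W(K'))$ is established, Definition \ref{WittSpan} gives $ws(K') \le \operatorname{rank}(G_F) = b_1(F) = \gamma_c(K)$, and since $W(K')=W(K)$ we get $ws(K) = ws(K') \le \gamma_c(K)$.

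The third step is bookkeeping: confirm $ws$ depends only on $W(K)$ (immediate from Definition \ref{WittSpan}, as noted after it), that $W$ is a concordance invariant, and that therefore it suffices to work with any single $K'$ concordant to $K$ realizing the minimum $\gamma_c(K)$.

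The main obstacle I expect is precisely the comparison $\partial([G_F]) = \partial(W(K))$ for a \emph{nonorientable} spanning surface: the Gordon--Litherland form is the natural object here, but its determinant differs from $\det K$ by a factor related to $e(F)$, and over $\mathbb Q_2$ one must be careful because $2$-torsion phenomena and the dyadic Hilbert-symbol subtleties from Section \ref{SectionForAnisotropyOverQ2} are exactly where naive congruence arguments can fail. The resolution should be that $\partial_2 \circ \iota_2$ annihilates the ambiguity — any two spanning surfaces for $K'$ give forms that become $\mathbb Z[1/2]$-congruent after stabilization by hyperbolics, and $\partial_p$ for all primes $p$ is insensitive to hyperbolic summands and to unimodular-over-$\mathbb Z_p$ congruence — so that $\partial([G_F])$ and $\partial(W(K'))$ coincide even though the two Witt classes themselves may differ by $\iota(\mathbb Z)$. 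I would isolate this as a lemma ("for any spanning surface $F$ of a knot $J$, $\partial([G_F]) = \partial(W(J))$") and prove it by reducing to the Seifert case via the double cover / stabilization relation between $F$ and a Seifert surface.
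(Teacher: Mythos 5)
Your overall strategy coincides with the paper's: take a nonorientable spanning surface $F$ realizing $\gamma_3(K')$ for a concordance representative $K'$, use its Gordon--Litherland form (of rank $b_1(F)$), prove the key lemma $\partial([\mathcal G_F])=\partial(W(K'))$, and conclude from Definition \ref{WittSpan} together with concordance invariance of $W(K)$. Your observation that the two Witt classes may differ only by an element of $\iota(\mathbb Z)$, which $\partial$ annihilates, is exactly the right idea and is how the paper concludes.

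However, the mechanism you propose for the central lemma does not hold as stated, and this is a genuine gap. First, two spanning surfaces of the same knot do not become $\mathbb Z[1/2]$-congruent after stabilization by hyperbolics: by Gordon--Litherland's $S^\ast$-equivalence theorem (Theorem 11 of \cite{GordonLitherland}, quoted in the paper), the moves relating them are ambient isotopy, addition of a 1-handle (which adds $\mathbb H$ to the form), and addition of a half-twisted band, which adds a diagonal summand $\langle \pm 1\rangle$. That last move is not a hyperbolic stabilization and genuinely changes the class in $W(\mathbb Q)$, by $[\pm 1]$; the saving fact is only that $[\pm 1]$ lies in $\ker \partial = \iota(\mathbb Z)$. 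Second, comparing with $2\cdot(V+V^\tau)$ is both unnecessary and hazardous: for a Seifert surface the Gordon--Litherland form equals $V+V^\tau$ itself (the push-off of $2b$ produces $\ell k(a,b^+)+\ell k(a,b^-)$, no overall factor of $2$), and scaling a form by $2$ changes its image under $\partial_p$ in general (for odd $p$ it multiplies the residue classes by $[2]$, which need not be a square in $\mathbb F_p$, and it also alters $\partial_2$), so establishing agreement of $\partial$-images with $2\cdot(V+V^\tau)$ would not yield $\partial([\mathcal G_F])=\partial(W(K'))$. Replacing the $\mathbb Z[1/2]$/double-cover reduction by the $S^\ast$-equivalence moves and tracking their algebraic effect on a Gram matrix ($G\mapsto AGA^\tau$, $G\mapsto G\oplus \mathbb H$, $G\mapsto G\oplus\langle\pm 1\rangle$) closes the gap, and this is precisely the paper's argument.
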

%%%
%%%
\begin{proof}
The proof of the claim leans heavily on the results from \cite{GordonLitherland}, where Gordon and Litherland define a bilinear form $\mathcal G_S:H_1(S;\mathbb Z)\times H_1(S;\mathbb Z) \to \mathbb Z$ associated to any spanning surface $S$ of $K$ which may be orientable or not, but in any case is non-oriented. To define $\mathcal G_S$, pick $\alpha, \beta \in H_1(S;\mathbb Z)$ and let $a, b \subset S$ be cycles representing $\alpha$ and $\beta$ respectively. Push $2b$ off of $S$ and into $S^3-S$ to obtain a new curve, call it $b'$. Then $\mathcal G_S(\alpha, \beta)$ is defined to be the linking number of $a$ with $b'$. 

When $S$ is a Seifert surface for $K$, this form recovers the symmetrized linking pairing $V_S+V_S^\tau$ from Section \ref{SetcionWittClassOfAKnot}, and by definition, the Witt class $W(K)$ of $K$ is the equivalence class of $[V_S+V_S^\tau]$ in $W(\mathbb Q)$. 

On the other hand, Gordon and Litherland prove that any two spanning surfaces for $K$ are related by a finite number of 3 types of moves  and their inverses (\cite{GordonLitherland}, Theorem 11):
\begin{itemize}
\item[(i)] Ambient isotopy. 
\item[(ii)] Addition of a 1-handle. 
\item[(iii)] Addition of a half-twisted handle (see Figure \ref{FigureHalfTwistedBands}). 
\end{itemize} 
%
%%%
%%%
\begin{figure} 
\includegraphics[width=10cm]{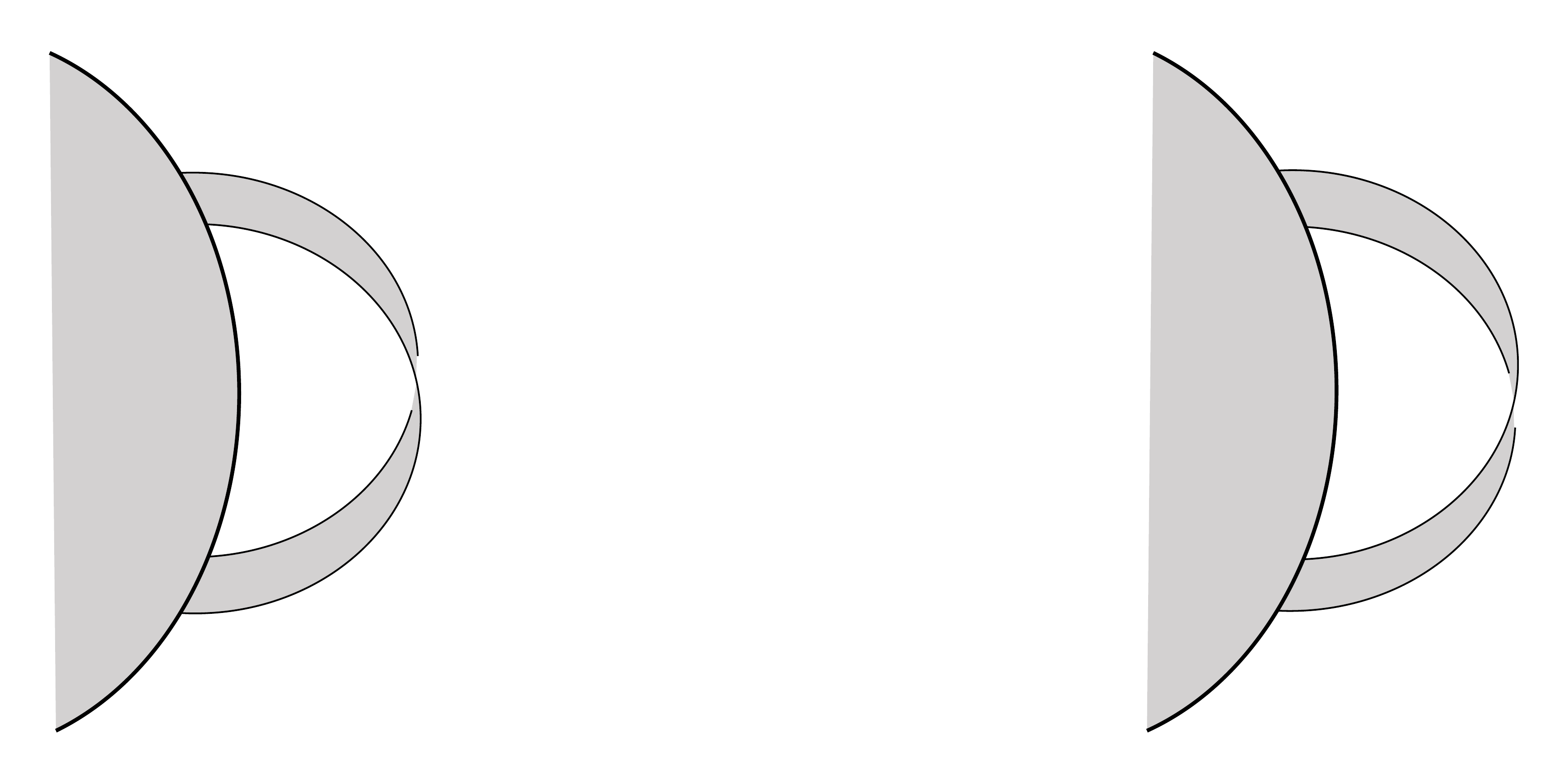}
\put(-70,70){$S$}
\put(-270,70){$S$}
\caption{Adding two types of half-twisted bands to a spanning surface $S$ of the knot $K$. } \label{FigureHalfTwistedBands}
\end{figure}
%%%
%%%
Surfaces related in this manner are said to be {\em $S^\ast$-equivalent}, whereas the usual $S$-equivalence of Seifert surfaces corresponds to modifications by moves (i) and (ii) and their inverses. The impact of either move (i)--(iii) on the pairing $\mathcal G_S$ is readily  worked out, and yields the algebraic counterparts of the moves of (i)--(iii). To more easily state them, we work with a matrix representative $G = G_S$ of $\mathcal G_S$ with respect to some basis of $H_1(S;\mathbb Z)$. We let $G'$ be the newly obtained matrix after applying each of the moves (i) -- (iii)  to $G$. Then 
\begin{itemize}
\item[(i')] $G\mapsto G'=AGA^\tau$, with $A$ an integral matrix of determinant $\pm 1$. 
\vskip3mm
\item[(ii')] $G\mapsto G'=\left[\begin{array}{ccc|cc}   
 & & & \ast & 0 \cr
& G & & \vdots & \vdots \cr
& & & \ast & 0 \cr \hline 
\ast & \dots & \ast & 0 & 1 \cr
0 & \dots & 0 & 1 & 0
 \end{array} \right]. $
\vskip3mm
\item [(iii')] $G\mapsto G'=\left[\begin{array}{c|c}   G & 0 \cr \hline  0 & \pm 1 \end{array} \right]$
\end{itemize}
It remains to observe the effect each of the algebraic moves (i')--(iii') has on the rational Witt class $[G]$ as we replace it by the rational Witt class of $[G']$. If $G'$ is obtained from $G$ via move (i'), then $G$ and $G'$ are isomorphic and so $[G]=[G']$. If $G'$ is obtained from $G$ by a move of type (ii') then $G' \cong G\oplus \mathbb H$ and thus again $[G] = [G']$. Lastly, if $G'$ arises from $G$ via move (iii'), then $[G'] = [G] \oplus [ \pm 1]$. This move certainly changes the Witt class, but it does so by an element in the kernel of the homomorphism $\partial :W(\mathbb Q) \to \oplus _{p \text{ prime}} W(\mathbb F_p)$ from \eqref{SESForWQ}. We conclude that $\partial ([G])$ is independent of the surface $S$ used to compute it. 

If $S$ is a nonorientable spanning surface for $K$ realizing $\gamma_3(K)$, then the associated form $\mathcal G_S$ has rank equal to $\gamma_3(K)$ and its equivalence class in $W(\mathbb Q)$ lies in $\partial ^{-1}(W(K))$. Since $ws(K)$ is the minimum rank of all such forms, it follows that $ws(K)  \le \gamma_3(K)$. Lastly, since the Witt class $W(K)$ is a concordance invariant, it follows that so is $ws(K)$ implying that $ws(K) \le \gamma_c(K)$, as claimed.    
\end{proof}
%%%
Lemma \ref{LemmaBoundOnWSFromAboveByThree}, Proposition \ref{PropositionLowerBoundOnGammaC} and Example \ref{ExampleGivingFullRangeOfWittSpan} below, complete the proof of Theorem \ref{main}. 
%%%%%%%%%%%%%%%%%%%
%%%%%%%%%%%%%%%%%%%
\subsection{Algorithm for computing the Witt span}
%%%
%%%
This section outlines how to compute $ws([B])$ for any non-degenerate symmetric bilinear form $B$ over $\mathbb Q$. Recall that $ws([B])$ is the minimal rank among all forms $A$ with $[A]\in \partial ^{-1}(\partial ([B]))$ where $\partial :W(\mathbb Q) \to \oplus _{p \text{ prime}}W(\mathbb F_p)$ is as in \eqref{SESForWQ}. 

For a bilinear form $B$, the set $\partial^{-1}(\partial ([B]))$ can be explicitly described courtesy of Theorem \ref{TheoremSplitExactSequenceForWittGroupOfRationals}:
$$\partial^{-1}(\partial([B])) = \{ (a \cdot [1]) \oplus (b\cdot [-1]) \oplus [B] \, |\, |a, b \ge 0\}.$$
The isomorphisms types of the forms $A$ whose Witt classes belong to this set are given by 
$$[A]\in \partial^{-1}(\partial([B])) \qquad \Longleftrightarrow  \qquad  A\cong (a \cdot \langle 1\rangle ) \oplus (b\cdot \langle -1\rangle ) \oplus B\oplus \mathbb H^m,$$
with $a, b, m \ge 0$. Let  $B\cong \mathbb H^n\oplus B_{an}$ be the Witt decomposition for $B$ with $n\ge 0$ and $B_{an}$ anisotropic, and recall that over $\mathbb Q$, we have $\mathbb H\cong \langle 1 \rangle \oplus \langle -1 \rangle$. Thus we the arrive at this characterization of forms $A$ with $[A] \in \partial ^{-1}(\partial (B))$:
\begin{equation} \label{EquationListOfFormToFindWittSpan}
[A]\in \partial^{-1}(\partial([B])) \qquad \Longleftrightarrow \qquad  A\cong (a \cdot \langle 1\rangle ) \oplus (b\cdot \langle -1\rangle ) \oplus B_{an}, \quad a, b, \ge 0.
\end{equation}
The task then is to find the Witt decompositions (see sentence after Theorem \ref{TheoremOnUniqueAnisotropicRepresentatives}) of the forms in \eqref{EquationListOfFormToFindWittSpan} and determine the smallest rank of the anisotropic parts of said decompositions, that rank then equals $ws([B])$. Without loss of generality, we can assume at the outset that rank$(B_{an})\le 3$, for if rank$(B_{an})\ge 4$, then at least one of $B_{an}\oplus \langle \pm 1\rangle$ is indefinite and of rank at least 5, and hence isotropic by Meyer's Theorem \ref{TheoremMeyer}. As such the form is isomorphic to $B'_{an}\oplus \mathbb H$ with $B'_{an}$ an anisotropic form of rank one less than the rank of $B_{an}$ and with $\partial [B'_{an}] = \partial [B_{an}]$ (the fact that $\mathbb H$ shows up with power 1 in this decomposition, relies on \eqref{EquationOnTheChangeOfIsotropyIndex} below). 
%%%
%%%
\begin{theorem} \label{TheoremAlgorithmForComputingws}
Let $B$ be a non-degenerate, symmetric, bilinear form over $\mathbb Q$. For $r=1, 2, 3$ let $\mathcal I_r$ be the set of signatures occurring among rank $r$, anisotropic forms whose Witt equivalence class lies in $\partial ^{-1}(\partial ([B]))$.  Then
%%%
\begin{itemize}
\item[(i)] $ws([B]) = 0$ if and only if $\partial ([B]) = 0$.   
%%%
\item[(ii)] $ws([B])=1$ if and only if $\mathcal I_1\ne \emptyset$ and $\partial ([B]) \ne 0$.
%%%
\item[(iii)]  $ws([B]) = 2$ if and only if $\mathcal I_2 \ne \emptyset$ and for each $i\in \mathcal I_2$, the forms $B_i\oplus \langle -1 \rangle$ and $B_i\oplus \langle -1 \rangle$ are anisotropic. The set $I_2$ can equal any non-empty subset of $\{0, \pm 2\}$. 
%%%
\item[(iv)] $ws([B])=3$ if and only if $\mathcal I_3 = \{\pm 1, \pm 3\}$ and the forms $B_{i}\oplus \langle 1 \rangle$ and $B_{i}\oplus \langle -1 \rangle$ are anisotropic for each $i\in \mathcal I_3$.
\end{itemize}

\end{theorem}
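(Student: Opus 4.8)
\textbf{Proof proposal for Theorem \ref{TheoremAlgorithmForComputingws}.}

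The plan is to analyze, for each target rank $r \in \{0,1,2,3\}$, exactly when the set $\partial^{-1}(\partial([B]))$ contains a Witt class whose anisotropic representative has rank $r$, working from the normal form \eqref{EquationListOfFormToFindWittSpan}, namely $A \cong (a\cdot\langle 1\rangle)\oplus(b\cdot\langle -1\rangle)\oplus B_{an}$ with $a,b\ge 0$, where $B_{an}$ is the (fixed, up to isomorphism) anisotropic part of $B$. Throughout I may assume $\operatorname{rank}(B_{an})\le 3$, as justified in the paragraph preceding the theorem. The key structural input is Corollary \ref{CorollaryDecompositionOfIsotropicFormsOverNondyadicFields}: over $\mathbb Q$ every form $A$ decomposes uniquely as $\mathbb H^n\oplus A_{an}$, so $\operatorname{rank}(A_{an}) = \operatorname{rank}(A) - 2n$ where $n$ is the isotropy index. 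I will also record the elementary fact about how the isotropy index changes under adding a $\langle\pm1\rangle$ summand, i.e. an equation of the form
\begin{equation} \label{EquationOnTheChangeOfIsotropyIndex}
n(A\oplus\langle\pm 1\rangle) \in \{\, n(A),\ n(A)+1\,\},
\end{equation}
which follows because a hyperbolic plane has index $1$ and adding one generator can create at most one new hyperbolic plane, together with the signature constraint $|\sigma(A_{an})| = \operatorname{rank}(A_{an})$ valid only in the anisotropic-over-$\mathbb R$ (definite) case but more usefully $\operatorname{rank}(A_{an}) \ge |\sigma(A)|$ in general.

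First I would dispatch (i) and (ii). For (i): $ws([B])=0$ means some $A$ with $\partial[A]=\partial[B]$ is the zero form, equivalently $[B]\in\ker\partial = \iota(\mathbb Z)$; but the only class in $\iota(\mathbb Z)$ realizable by a rank-$0$ anisotropic form is $0$ itself, and $\partial$ kills $\iota(\mathbb Z)$, so $ws([B])=0 \iff \partial([B])=0$. For (ii): if $\partial([B])\ne 0$ then $ws([B])\ge 1$ by (i), so $ws([B])=1$ iff additionally some rank-$1$ anisotropic form lies over $\partial([B])$, i.e. $\mathcal I_1\ne\emptyset$. Here every rank-$1$ form $\langle d\rangle$ is anisotropic and has signature $\pm 1$, so $\mathcal I_1\subseteq\{\pm 1\}$; I should note (as Theorem \ref{Consequence1} already essentially records) that $\mathcal I_1\ne\emptyset$ forces $B_{an}$ to itself be rank $1$ after stabilization, and that the two possible signatures are governed by $\det$. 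The bulk of the work is (iii) and (iv), where I must (a) show that when $\mathcal I_2\ne\emptyset$ one cannot do better, i.e. $\mathcal I_1=\emptyset$ — this is automatic since $ws$ is a single number and the rank-$1$ and rank-$2$ cases partition by whether $\partial([B])$ is hit by a rank-$1$ class; (b) pin down the possible signature sets. For rank $2$, an anisotropic form over $\mathbb Q$ is $\langle a\rangle\oplus\langle b\rangle$ with $-ab\notin(\mathbb Q^\ast)^2$ locally everywhere relevant, and its signature is in $\{0,\pm 2\}$, giving $\mathcal I_2\subseteq\{0,\pm 2\}$; I must verify each nonempty subset is achievable by exhibiting forms $B$ (this is a realizability lemma, likely deferred to examples). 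For rank $3$, the crucial claim is that if $ws([B])=3$ then $\mathcal I_3$ is the \emph{full} set $\{\pm 1,\pm 3\}$: this is where \eqref{EquationOnTheChangeOfIsotropyIndex} does the heavy lifting. Indeed, starting from one anisotropic rank-$3$ form $A$ over $\partial([B])$ with signature $\sigma$, the forms $A\oplus\langle 1\rangle$ and $A\oplus\langle -1\rangle$ each have rank $4$; by Meyer's Theorem \ref{TheoremMeyer} the indefinite one is isotropic, hence equals $\mathbb H\oplus A'$ with $A'$ anisotropic of rank $2$ — but $ws([B])=3$ forbids rank $2$, contradiction, unless \emph{both} $A\oplus\langle\pm1\rangle$ are definite, which forces $|\sigma|=3$; then from a $\sigma=+3$ witness I produce the $\sigma=+1$ witness by adding $\langle -1\rangle$ and checking it stays anisotropic (again forced, else $ws<3$), and symmetrically, yielding all of $\{\pm 1,\pm 3\}$. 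Conversely if $\mathcal I_3=\{\pm 1,\pm 3\}$ and the stated anisotropy conditions hold, then $\mathcal I_2=\mathcal I_1=\emptyset$ (no smaller anisotropic representative exists, precisely because every rank-$4$ stabilization of a rank-$3$ witness stays anisotropic, blocking the descent), so $ws([B])=3$.

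The main obstacle I anticipate is the rank-$3$ case (iv), specifically making airtight the equivalence between "$ws([B])=3$" and "$\mathcal I_3=\{\pm1,\pm3\}$ together with the all-stabilizations-anisotropic condition". The delicate point is the bookkeeping with \eqref{EquationOnTheChangeOfIsotropyIndex}: I need that adding a single generator to a rank-$3$ anisotropic form over $\mathbb Q$ either keeps it anisotropic (rank-$4$ anisotropic, which by Theorem \ref{IsotropicFormsOfDimension5}/Hasse-Minkowski is a strong constraint — it must be anisotropic over every $\mathbb Q_p$ and over $\mathbb R$) or produces exactly one hyperbolic plane (so the anisotropic part drops to rank $2$), and never drops by more. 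This "drops by at most one hyperbolic plane" fact for a single-generator stabilization is standard (it is how the residue/isotropy index behaves, cf. Corollary \ref{CorollaryDecompositionOfIsotropicFormsOverNondyadicFields}), but I should state and cite or quickly prove it as \eqref{EquationOnTheChangeOfIsotropyIndex}. Once that is in hand, the signature-set determination is a short combinatorial argument, and the realizability of each signature pattern is relegated to the worked examples (Examples \ref{Example940} and \ref{ExampleProvingTheoremAboutPretzelKnots}) which already exhibit $ws=2$ with various signatures and $ws=3$.
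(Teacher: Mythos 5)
Your outline of (i) and (ii) and your identification of the key tool (the index inequality: adding a single $\langle\pm1\rangle$ to an anisotropic form splits off at most one hyperbolic plane) are consistent with the paper, but the heart of the theorem, parts (iii) and (iv), is not established, and the forward direction of (iv) as you argue it is wrong. You apply Meyer's Theorem \ref{TheoremMeyer} to the rank-$4$ forms $A\oplus\langle\pm1\rangle$; Meyer requires rank at least $5$, and an indefinite rank-$4$ form over $\mathbb Q$ can perfectly well be anisotropic (anisotropy can be supplied by some $\mathbb Q_p$, not only by definiteness over $\mathbb R$). Your conclusion that $ws([B])=3$ forces the rank-$3$ witness to have $|\sigma|=3$ with definite stabilizations contradicts the very statement you are proving, since $\mathcal I_3$ must contain $\pm1$, and those witnesses have indefinite yet anisotropic rank-$4$ stabilizations. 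The correct route (the paper's) is to stabilize twice: for a witness $B_i$ with $i\in\{\pm1\}$, the forms $B_i\oplus\langle\pm1\rangle\oplus\langle\pm1\rangle$ have rank $5$ and are indefinite, hence isotropic by Meyer; the assumption $ws([B])=3$ rules out isotropy index $2$ (which would leave a rank-$1$ anisotropic part), so exactly one $\mathbb H$ splits off and the anisotropic part is a new rank-$3$ witness of signature $i\pm2$; iterating from any starting $i\in\{\pm1,\pm3\}$ produces all four signatures, while isotropy of any $B_i\oplus\langle\pm1\rangle$ would create a rank-$2$ witness, contradicting $ws=3$. Your "pass from $\sigma=3$ to $\sigma=1$ by adding one $\langle-1\rangle$" has the same rank bookkeeping problem: a single stabilization changes rank by one, so producing a new rank-$3$ witness needs two stabilizations plus the splitting of one $\mathbb H$.

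The converse directions of (iii) and (iv) are also not carried out. Saying the anisotropy hypotheses "block the descent" only controls one-step stabilizations of the specific witnesses $B_i$; to conclude $ws([B])=2$ or $3$ you must show that every form $(a\cdot\langle1\rangle)\oplus(b\cdot\langle-1\rangle)\oplus B_{an}$, for all $a,b\ge0$, has anisotropic part of rank at least $2$ (respectively $3$). That requires an induction on $a,b$ in which, each time the anisotropic part drops, you verify it lands on another controlled form --- either one of the $B_j$ with $j\in\mathcal I_2$ (respectively $\mathcal I_3$) or, in the rank-$2$ case, an intermediate rank-$3$ or definite form whose further stabilizations must be tracked separately. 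This is exactly what the paper's explicit case analysis (cases (a1)--(b2) and (A)--(D), and the displayed Witt decompositions for all $a,b$) accomplishes, and it is also how the assertion about which nonempty subsets of $\{0,\pm2\}$ occur as $\mathcal I_2$ is proved; that assertion is part of the theorem and cannot be deferred to examples. As written, the proposal has the right skeleton and the right lemma, but the misuse of Meyer at rank $4$ and the missing induction leave both hard directions unproven.
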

%%%
%%%
\begin{proof}
In what follows we shall make repeated tacit use of the following formula (cf. \cite{Szymiczek}, Page 151) applicable to any pair of forms $B_{an}, C$ of which $B_{an}$ is anisotropic:
\begin{equation} \label{EquationOnTheChangeOfIsotropyIndex}
\text{ind} ( B_{an} \oplus C) + \text{ind}(C) \le \text{rank}(C).
\end{equation}
Here ``ind" refers to the index of isotropy of a form as defined in Corollary \ref{CorollaryDecompositionOfIsotropicFormsOverNondyadicFields}. In particular, if $C=\langle \pm 1 \rangle$ then 
ind$(B_{an}\oplus \langle \pm 1 \rangle ) \le 1$. This allows for two alternatives, either $B_{an}\oplus \langle \pm 1 \rangle$ is anisotropic or  $B_{an}\oplus \langle \pm 1 \rangle \cong B'_{an}\oplus \mathbb H$ with $B'_{an}$ another anisotropic form. Said differently, adding $\langle \pm 1\rangle$ to an anisotropic form can generate at most one hyperbolic summand $\mathbb H$ in its Witt decompostion. With this understood, we turn to the proof of the theorem, skipping over cases (i) and (ii) which are obvious. From hereon out, let $B\cong B_{an}\oplus \mathbb H^n$ be the Witt decomposition of $B$ as in \eqref{EquationListOfFormToFindWittSpan}. 
\vskip3mm
%%%
\noindent Case (iii). If $ws([B]) = 2$, there must exist a rank 2 form with Witt equivalence class in $\partial^{-1}(\partial ([B]))$ and thus $\mathcal I_2\ne \emptyset$. If for any $i\in \mathcal I_2$, either $B_i \oplus \langle -1 \rangle $ or $B_i \oplus \langle 1 \rangle$ were isotropic, we could write that form as $B'\oplus \mathbb H$ with $B'$ an anisotropic form of rank 1 and with $\partial [B']=\partial [B]$, a contradiction to the assumption of $ws([B]) = 2$. 

Conversely, suppose that $\mathcal I_2\ne \emptyset $ and that for each $i\in \mathcal I_2$, both forms $B_i \oplus \langle \pm 1\rangle$ are anisotropic. Without loss of generality we may assume that $B_{an} \cong B_i$ for some choice of $i\in \mathcal I_2$. We next find the Witt decomposition of all forms from \eqref{EquationListOfFormToFindWittSpan}, by considering various possible cases for $\mathcal I_2$. 

If $B_{an} \cong B_2$, then by assumption $B_2 \oplus \langle \pm 1 \rangle$ is anisotropic, but $B_2 \oplus \langle -1 \rangle\oplus \langle - 1 \rangle$ may be isotropic or anisotropic, splitting the discussion into two subcases, both of which occur in practice (note that there is no need to consider $B_2\oplus \langle 1 \rangle \oplus \langle 1 \rangle$ as this form is positive definite and hence anisotropic):
%%%
\begin{itemize}
\item[(a)] Suppose $B_2\oplus \langle -1 \rangle \oplus \langle -1 \rangle $ is isotropic and write  
$$B_2\oplus \langle -1 \rangle \oplus \langle -1 \rangle  \cong B_0 \oplus \mathbb H$$
for an anisotropic form $B_0$ or rank 2 and signature 0, forcing $0\in \mathcal I_2$. By assumption $B_0\oplus \langle \pm 1\rangle$ is anisotropic, but $B_0\oplus \langle -1 \rangle  \oplus \langle -1 \rangle $ may be either isotropic or anisotropic, splitting the discussion into two additional subcases. 
%%%
\begin{itemize}
\item[(a1)] Suppose that $B_0\oplus \langle -1 \rangle  \oplus \langle -1 \rangle $ is isotropic and write 
$$B_0\oplus \langle -1 \rangle  \oplus \langle -1 \rangle  \cong B_{-2}\oplus \mathbb H$$
for an anisotropic form $B_{-2}$ of rank 2 and signature $-2$. Note that then $-2 \in \mathcal I_2$ and thus $\mathcal I_2 = \{0, \pm 2\}$. 
%%%
\item[(a2)] Suppose that $B_0\oplus \langle -1 \rangle  \oplus \langle -1 \rangle $ is anisotropic, in which case  $B_0\oplus \langle -1 \rangle  \oplus \langle -1 \rangle \oplus \langle -1 \rangle $ is isotropic (by Meyer's Theorem) and so 
$$B_0\oplus \langle -1 \rangle  \oplus \langle -1 \rangle \oplus \langle -1 \rangle \cong B_{-3}\oplus \mathbb H.$$
Here $B_{-3}$ is an anisotropic form or rank 3 and signature $-3$. We shall see in a bit that in the present case one obtains $\mathcal I_2 = \{0,2\}$
\end{itemize}
%%%
%%%
\item[(b)] Suppose $B_2\oplus \langle -1 \rangle \oplus \langle -1 \rangle $ is anisotropic, then  $B_2\oplus \langle -1 \rangle \oplus \langle -1 \rangle \oplus \langle -1 \rangle$ must be isotropic, and we can write 
$$B_2\oplus \langle -1 \rangle \oplus \langle -1 \rangle \oplus \langle -1 \rangle \cong B_{-1}\oplus \mathbb H$$
for an anisotropic form $B_{-1}$ of rank $3$ and signature $-1$. The form $B_{-1}\oplus \langle -1 \rangle$ may be isotropic or anisotropic. 
%%%
\begin{itemize}
\item[(b1)] If $B_{-1}\oplus \langle -1 \rangle$ is isotropic then 
$$B_{-1}\oplus \langle -1 \rangle \cong B_{-2}\oplus \mathbb H$$
for some rank 2, anisotropic form $B_{-2}$ of signature $-2$. As we shall see below, in this case $\mathcal I_2 = \{\pm 2\}$. 
%%%
\item[(b2)] If $B_{-1}\oplus \langle -1 \rangle$ is anisotropic then $B_{-1}\oplus \langle -1 \rangle \oplus \langle -1 \rangle $ is isotropic and thus 
$$ B_{-1}\oplus \langle -1 \rangle \oplus \langle -1 \rangle \cong B_{-3}\oplus \mathbb H.$$
Here $B_{-3}$ is a rank 3 form of signature $-3$. We shall see that in the present case one finds $\mathcal I_2 = \{2 \}$.
\end{itemize}
\end{itemize}
%%%%
Thus, with the assumption of $2\in \mathcal I_2$, the Witt decompositions of the forms \eqref{EquationListOfFormToFindWittSpan} with $B_{an} = B_2$, become:

\begin{align*}
(a1) \Longrightarrow (b\cdot \langle -1 \rangle ) \oplus B_2 & \cong \left\{
\begin{array}{rl}
((b\cdot \langle -1 \rangle ) \oplus B_2) \oplus \mathbb H^0 & \quad ; \quad b=0, 1 \cr
(((b-2)\cdot \langle -1 \rangle ) \oplus B_0) \oplus \mathbb H^1 & \quad ; \quad b=2, 3 \cr
(((b-4)\cdot \langle -1 \rangle ) \oplus B_{-2}) \oplus \mathbb H^1 & \quad ; \quad b\ge 4 \cr
\end{array}
\right. \cr \cr
%%%
(a2) \Longrightarrow (b\cdot \langle -1 \rangle ) \oplus B_2 & \cong \left\{
\begin{array}{rl}
((b\cdot \langle -1 \rangle ) \oplus B_2) \oplus \mathbb H^0 & \quad ; \quad b=0, 1 \cr
(((b-2)\cdot \langle -1 \rangle ) \oplus B_0) \oplus \mathbb H^1 & \quad ; \quad b=2, 3, 4 \cr
(((b-5)\cdot \langle -1 \rangle ) \oplus B_{-3}) \oplus \mathbb H^1 & \quad ; \quad b\ge 5 \cr
\end{array}
\right. \cr \cr
%%%
(b1) \Longrightarrow (b\cdot \langle -1 \rangle ) \oplus B_2 & \cong \left\{
\begin{array}{rl}
((b\cdot \langle -1 \rangle ) \oplus B_2) \oplus \mathbb H^0 & \quad ; \quad b=0, 1, 2 \cr
(((b-3)\cdot \langle -1 \rangle ) \oplus B_{-1}) \oplus \mathbb H^1 & \quad ; \quad b=3, \cr
(((b-4)\cdot \langle -1 \rangle ) \oplus B_{-2}) \oplus \mathbb H^1 & \quad ; \quad b\ge 4 \cr
\end{array}
\right. \cr \cr
%%%
(b2) \Longrightarrow (b\cdot \langle -1 \rangle ) \oplus B_2 & \cong \left\{
\begin{array}{rl}
((b\cdot \langle -1 \rangle ) \oplus B_2) \oplus \mathbb H^0 & \quad ; \quad b=0, 1, 2 \cr
(((b-3)\cdot \langle -1 \rangle ) \oplus B_{-1}) \oplus \mathbb H^1 & \quad ; \quad b=3, 4 \cr
(((b-4)\cdot \langle -1 \rangle ) \oplus B_{-3}) \oplus \mathbb H^1 & \quad ; \quad b\ge 5 \cr
\end{array}
\right. \cr \cr
\end{align*}
%%%
From these we can read off $\mathcal I_2$ in the various cases:
$$\mathcal I_2 = \left\{
\begin{array}{cl}
\{0,\pm 2\} & \quad ; \quad \text{Case (a1)}, \cr
\{0,2\} & \quad ; \quad \text{Case (a2)}, \cr
\{\pm 2\} & \quad ; \quad \text{Case (b1)}, \cr
\{2\} & \quad ; \quad \text{Case (b2)}, \cr
\end{array}
\right.$$
By inspection, we see that the minimum rank of any anisotropic component of the forms $(a\cdot \langle 1 \rangle ) \oplus (b\cdot \langle -1 \rangle) \oplus B_{an}$ is 2, proving the theorem for the case of $B_{an} \cong B_2$. The case of $B_{an}\cong B_{-2}$ proceeds in complete analogy, chiefly by changing signs, we omit the details. Instead, we turn to the case of $B_{an}\cong B_0$.  By assumption, $B_0\oplus \langle \pm 1 \rangle $ are anisotropic, but $B_0\oplus \langle \pm 1 \rangle \oplus \langle \pm 1 \rangle$ may be isotropic or anisotropic. 
%%%
\begin{itemize}
\item[(A)] If $B_0\oplus \langle 1 \rangle \oplus \langle 1 \rangle$  is isotropic, we may write 
$$B_0\oplus \langle 1 \rangle \oplus \langle 1 \rangle  \cong B_2\oplus \mathbb H$$
for an anisotropic form $B_2$ or rank and signature both equal to 2. 
%%%
\item[(B)] If $B_0\oplus \langle -1 \rangle \oplus \langle -1 \rangle$  is isotropic, we may write 
$$B_0\oplus \langle -1 \rangle \oplus \langle -1 \rangle  \cong B_{-2}\oplus \mathbb H$$
for an anisotropic form $B_{-2}$ or rank 2 and signature $-2$.  
%%%
\item[(C)] If $B_0\oplus \langle 1 \rangle \oplus \langle 1 \rangle$ is anisotropic, then If $B_0\oplus \langle 1 \rangle \oplus \langle 1 \rangle \oplus \langle 1 \rangle$  is isotropic and so 
$$B_0\oplus \langle 1 \rangle \oplus \langle 1 \rangle \oplus \langle 1 \rangle \cong B_3\oplus \mathbb H.$$
Here $B_3$ has rank and signature 3, and it is anisotropic.   
%%%
\item[(D)] If $B_0\oplus \langle -1 \rangle \oplus \langle -1 \rangle$ is anisotropic, then If $B_0\oplus \langle -1 \rangle \oplus \langle -1 \rangle \oplus \langle -1 \rangle$  is isotropic and so 
$$B_0\oplus \langle -1 \rangle \oplus \langle -1 \rangle \oplus \langle -1 \rangle \cong B_{-3}\oplus \mathbb H.$$
Here $B_3$ has rank 3 and signature $-3$, and it is anisotropic.   
\end{itemize}
%%%%
With these in place we can find the Witt decompositions of the forms from \eqref{EquationListOfFormToFindWittSpan} with $B_{an} = B_0$ as: 
%%%
\begin{align*}
(A) \Longrightarrow (a\cdot \langle 1 \rangle ) \oplus B_0 \cong \left\{
\begin{array}{rl}
((a\cdot\langle 1 \rangle ) \oplus B_0 )\oplus \mathbb H^0 & \quad ; \quad a=0, 1, \cr
(((a-2)\cdot\langle 1 \rangle ) \oplus B_2 )\oplus \mathbb H^1 & \quad ; \quad a\ge 2.
\end{array}
\right. \cr \cr
%%%
(B) \Longrightarrow (b\cdot \langle -1 \rangle ) \oplus B_0 \cong \left\{
\begin{array}{rl}
((b\cdot\langle 1 \rangle ) \oplus B_0 )\oplus \mathbb H^0 & \quad ; \quad b=0, 1, \cr
(((b-2)\cdot\langle 1 \rangle ) \oplus B_{-2} )\oplus \mathbb H^1 & \quad ; \quad b\ge 2.
\end{array}
\right. \cr \cr
%%%
(C) \Longrightarrow (a\cdot \langle 1 \rangle ) \oplus B_0 \cong \left\{
\begin{array}{rl}
((a\cdot\langle 1 \rangle ) \oplus B_0 )\oplus \mathbb H^0 & \quad ; \quad a=0, 1, 2\cr
(((a-3)\cdot\langle 1 \rangle ) \oplus B_3 )\oplus \mathbb H^1 & \quad ; \quad a\ge 3.
\end{array}
\right. \cr \cr
%%%
(D) \Longrightarrow (b\cdot \langle -1 \rangle ) \oplus B_0 \cong \left\{
\begin{array}{rl}
((b\cdot\langle 1 \rangle ) \oplus B_0 )\oplus \mathbb H^0 & \quad ; \quad b=0, 1, 2\cr
(((b-3)\cdot\langle 1 \rangle ) \oplus B_{-3} )\oplus \mathbb H^1 & \quad ; \quad b\ge 3.
\end{array}
\right. \cr \cr
\end{align*}
%%%
From these Witt decomposition we read off the set $\mathcal I_2$:
$$\mathcal I_2 = \left\{
\begin{array}{cl}
\{0,\pm 2\} & \quad ; \quad \text{Cases (A) \& (B) occurred}, \cr
\{0,2\} & \quad ; \quad \text{Cases (A) \& (D) occurred}, \cr
\{0,-2\} & \quad ; \quad \text{Cases (C) \& (B) occurred}, \cr
\{0\} & \quad ; \quad \text{Cases (C) \& (D) occurred}.
\end{array}
\right.$$
This completes the proof of case (iii) of the theorem. It is worthwhile pointing out that cases (A) and (B) above (and similarly cases (C) and (D)) ``almost" coincide, but not quite. Indeed, since $B_0$ is a signature 0 form, each of $B_0\oplus \langle \pm 1\rangle \oplus \langle \pm 1\rangle $ is isotropic over $\mathbb R$. Thus, to be anisotropic over $\mathbb Q$, $B_0\oplus \langle \pm 1\rangle \oplus \langle \pm 1\rangle $ must be aninsotropic over $\mathbb Q_p$ for some prime $p$. If such a $p$ can be found among odd primes, then $B_0\oplus \langle 1\rangle \oplus \langle 1 \rangle $ is anisotropic over $\mathbb Q_p$ if and only if $B_0\oplus \langle -1\rangle \oplus \langle -1 \rangle $ is anisotropic over $\mathbb Q_p$, cf. Corollary \ref{CorollaryAboutAnisotropyOverQ2OfCertainRank4Forms}. However, such an odd prime $p$ may not always exist. For instance, for $B_0=\langle -3\rangle \oplus \langle 5\rangle$ we find that $B_0\oplus \langle \pm 1 \rangle \oplus \langle \pm 1\rangle$ is isotropic over $\mathbb Q_p$ for every odd prime $p$, but that $B_0\oplus \langle 1 \rangle \oplus \langle 1\rangle $ is anisotropic over $\mathbb Q_2$ and thus anisotropic over $\mathbb Q$, while $B_0\oplus \langle -1 \rangle \oplus \langle -1\rangle $ is isotropic over $\mathbb Q_2$ and thus also over $\mathbb Q$. This shows that the cases (A) and (B) (and likewise (C) and (D)) must be considered as separate cases. 
%%%%%%%%%%%%%%%%%%%%%%%%%%%%%%%%%%%
\vskip3mm
\noindent Case (iv).  
If $ws([B])=3$ there must exist a rank 3 anisotropic form $B_i$ with $[B_i]\in \partial^{-1}(\partial ([B]))$ with $\sigma (B_i) = i$ for some choice of $i\in \{\pm 1, \pm 3\}$. If either of $B_i\oplus \langle 1 \rangle $ or $B_i\oplus \langle -1 \rangle$ were isotropic, the said form would be isomorphic to $\mathbb H\oplus B'$ with $B'$ an anisotropic form of rank 2 and with $\partial [B'] = \partial [B]$, contradicting the assumption of $ws([B]) = 3$. If $i\in \{\pm 1\}$, then both forms $B_i\oplus \langle 1 \rangle \oplus \langle 1 \rangle $ or $B_i\oplus \langle -1 \rangle \oplus \langle -1 \rangle$ must be isotropic (being or rank 5 and indefinite, see Meyer's Theorem \ref{TheoremMeyer}), and thus 
$$B_i\oplus \langle -1 \rangle \oplus \langle -1 \rangle \cong B_{i-2}\oplus \mathbb H \qquad \text{ and } \qquad B_i\oplus \langle 1 \rangle \oplus \langle 1 \rangle \cong B_{i+2}\oplus \mathbb H.$$
In the above, $B_{i\pm 2}$ are anisotropic forms of rank 3 with $\sigma (B_{i\pm 2}) = i\pm 2$ and $\partial [B_{i\pm 2}]=\partial [B]$. Thus, if $i=1$ these constructions yield the forms $B_3$ and $B_{-1}$, while if $i=-1$, the constructions yield $B_1$ and $B_{-3}$. In either case we have obtained both $B_{\pm 1}$ and therefore also $B_{\pm 3}$. If the initial form $B_i$ had $i\in \{\pm 3\}$, the same construction works to firstly yield $B_{\pm 1}$ via 
$$B_3\oplus \langle -1 \rangle \oplus \langle -1 \rangle \cong B_1 \oplus \mathbb H \qquad \text{ or } \qquad  B_{-3}\oplus \langle 1 \rangle \oplus \langle 1 \rangle \cong B_{-1} \oplus \mathbb H,$$
and then proceed as before. In summary, if $ws([B])=3$, all four forms $B_i$, $i\in \{\pm 1, \pm 3\}$ must exist and each of  $B_i \oplus \langle -1 \rangle$ and $B_i \oplus \langle 1 \rangle$ is anisotropic. 

Conversely, suppose there exist rank 3, anisotropic forms $B_i$ with $[B_i] \in \partial ^{-1}(\partial ([B]))$, $i\in \{\pm 1, \pm 3\}$, with $\sigma (B_i) = i$, and with each of $B_i \oplus \langle -1 \rangle$ and $B_i \oplus \langle 1 \rangle$ anisotropic. As in the preceding case, we may again without loss of generality assume that $B_{an} \cong B_i$ for some $i\in \{\pm 1, \pm 3\}$. It remains to find the Witt decompositions of the forms \eqref{EquationListOfFormToFindWittSpan} for the various possibilities of $i\in \{\pm 1, \pm 3\}$. 

$$
(b\cdot \langle -1 \rangle ) \oplus B_3 \cong \left\{
\begin{array}{rl}
(B_3)\oplus \mathbb H^0 & \quad ; \quad b=0, \cr
(\langle -1\rangle \oplus B_3) \oplus \mathbb H^0 & \quad ; \quad b=1, \cr
B_1 \oplus \mathbb H^1  & \quad ; \quad b=2, \cr
(\langle -1\rangle \oplus B_1) \oplus \mathbb H^1  & \quad ; \quad b=3, \cr
(B_{-1} ) \oplus \mathbb H^2  & \quad ; \quad b=4, \cr
(\langle -1\rangle \oplus B_{-1}) \oplus \mathbb H^2  & \quad ; \quad b=5, \cr
((b-6)\cdot \langle -1\rangle \oplus B_{-3}) \oplus \mathbb H^3  & \quad ; \quad b\ge 6.
\end{array}
\right.
$$
The forms on the right-hand side enclosed in parenthesis represent the anisotropic parts of the Witt decomposition, and we see that among them the lowest rank is 3. The remaining cases of $i\in \{-3, \pm 1\}$ follow similarly. We make explicit the case of $i=-1$ and leave the other two cases as easy exercises for the interested reader: 
$$
(a\cdot \langle 1 \rangle ) \oplus B_{-1} \cong \left\{
\begin{array}{rl}
(B_{-1})\oplus \mathbb H^0 & \quad ; \quad a=0, \cr
(\langle 1\rangle \oplus B_{-1}) \oplus \mathbb H^0 & \quad ; \quad a=1, \cr
(B_{1}) \oplus \mathbb H^1  & \quad ; \quad a = 2, \cr 
(\langle 1 \rangle \oplus B_{1}) \oplus \mathbb H^1  & \quad ; \quad a = 3, \cr 
((a-4)\cdot \langle 1 \rangle \oplus B_{3} )  \oplus \mathbb H^2  & \quad ; \quad a \ge 4, \cr 
\end{array}
\right.
$$
$$
(b\cdot \langle -1 \rangle ) \oplus B_{-1} \cong \left\{
\begin{array}{rl}
(B_{-1})\oplus \mathbb H^0 & \quad ; \quad b=0, \cr
(\langle -1\rangle\oplus B_{-1} ) \oplus \mathbb H^0 & \quad ; \quad b=1, \cr
((b-2)\cdot \langle -1\rangle \oplus B_{-3})  \oplus \mathbb H^1  & \quad ; \quad b\ge 2. 
\end{array}
\right.
$$
Once again, by inspection, we see that the smallest rank among the anisotropic forms in these Witt decopositions equals 3. This concludes the proof of part (iv) of the theorem. 
\end{proof}
%%%
%%%
The goal of the next example is to illustrate the effectiveness of Theorem \ref{TheoremAlgorithmForComputingws} in computing the Witt span of knots, and to provide examples of knots $K_i$ with $ws(K_i) = i$ for each $i\in \{0,1,2,3\}$, thereby completing the proof of Theorem \ref{main}. 
%%%
%%%
\begin{example} \label{ExampleGivingFullRangeOfWittSpan} If $K_0$ is any slice knot, then $W(K_0)=0$ and thus $ws(K_0) = 0$. If $K_1$ is any of the alternating torus knots $T(2,p)$ with $p>0$ odd, then $W(K_1) = [ p ]$ and $\partial (W(K))\ne 0$,  showing that $ws(K_1) = 1$.  
\vskip3mm
Let $K_2=9_{40}$ as in Example \ref{Example940}, then $W(K_2)$ can be calculated from the Seifert matrix of $K_2$ listed on KnotInfo \cite{KnotInfo}:
$$W(K_2) = [-10] \oplus [-30].$$
We see that $B_{-2}=\langle -10\rangle \oplus \langle  -30\rangle$ is an anisotropic form with $[B_{-2}]\in \partial ^{-1}(\partial (W(K_2)))$, and so $ws(K_2)\le 2$. Clearly $B_{-2}\oplus \langle -1\rangle$ is anisotropic, as is the form $B_{-2}\oplus \langle 1 \rangle$ (being anisotropic over $\mathbb Q_{5}$). The form $B_{-2}\oplus \langle 1 \rangle \oplus \langle 1 \rangle$ is isotropic over $\mathbb Q$ and  
$$B_{-2}\oplus \langle 1 \rangle \oplus \langle 1 \rangle  \cong B_0 \oplus \mathbb H \qquad \text{ with } \qquad B_0 \cong \langle 10 \rangle \oplus \langle -30 \rangle.$$%
$B_0\oplus \langle -1\rangle $ is anisotropic (being isomorphic to $B_{-2}\oplus \langle 1 \rangle$) as is $B_0\oplus \langle 1\rangle$ (being anisotropic over $\mathbb Q_3$). Lastly, 
$$B_0 \oplus \langle 1 \rangle   \oplus \langle 1 \rangle  \cong B_2 \oplus \mathbb H  \qquad \text{ with } \qquad B_2 \cong \langle 155 \rangle \oplus \langle 465 \rangle. $$
Observe that $B_2 \oplus \langle \pm 1 \rangle$ is anisotropic over $\mathbb Q$. Thus, by Theorem \ref{TheoremAlgorithmForComputingws} we find that $ws(9_{40}) = 2$ (even though $K_2=9_{40}$ does not satisfy the hypothesis of Part (i) of Theorem \ref{TheoremCharacterizingWsTwoThree}). In the notation of Theorem \ref{TheoremAlgorithmForComputingws}, we obtain $\mathcal I_2=\{-2,0,2\}$, indeed in the proof of said theorem the current example mirrors case (a1). 
%%%
%%%
\vskip3mm
By way of another example of a knot with Witt span 2, consider $K'_2=7_5$. Then $W(K'_2)$ obtained from its Seifert surface on KnotInfo \cite{KnotInfo}, is :
$$W(K'_2) = [ -1 ] \oplus [-1 ] \oplus [ -3] \oplus [ -51].$$
We see that $B_{-2}=\langle -3\rangle \oplus \langle  -51\rangle$ is an anisotropic form with $[B_{-2}]\in \partial ^{-1}(\partial (W(K'_2)))$, and so $ws(K'_2)\le 2$. Clearly $B_{-2}\oplus \langle -1\rangle$ is anisotropic, and it is easy to see that $B_{-2}\oplus \langle 1 \rangle$ is anisotropic over $\mathbb Q_{17}$ and thus also over $\mathbb Q$. The form $B_{-2}\oplus \langle 1 \rangle \oplus \langle 1 \rangle$ is anisotropic over $\mathbb Q_2$ by Theorem \ref{TheoremCharacterizingSmallRankFormsOverQp}, but $B_{-2}\oplus \langle 1 \rangle \oplus \langle 1 \rangle \oplus \langle 1 \rangle$ is isotropic and 
$$   B_{-2}\oplus \langle 1 \rangle \oplus \langle 1 \rangle \oplus \langle 1 \rangle \cong B_1 \oplus \mathbb H \qquad \text{ with } \qquad B_1 \cong \langle 14 \rangle \oplus \langle 42 \rangle \oplus \langle -51 \rangle .$$%
Additionally
$$B_1\oplus \langle 1 \rangle  \cong B_2 \oplus \mathbb H \qquad \text{ with } \qquad B_2 = \langle 750414 \rangle \oplus \langle 44142\rangle.$$
Since $B_2\oplus \langle -1\rangle$ is isomorphic to $B_1$ it is clearly anisotropic, while $B_2 \oplus \langle 1 \rangle$ is anisotropic on account of being positive definite. Thus, by Theorem \ref{TheoremAlgorithmForComputingws} we find that $ws(7_5) = 2$. In the notation of Theorem \ref{TheoremAlgorithmForComputingws}, we find that $\mathcal I_2=\{\pm 2\}$, and in the proof of said theorem the current example mirrors case (b1). 
%%%
\vskip3mm
Next consider next the knot $K_3 = 9_{49}$. Relying again on KnotInfo we obtain 
$$W(K_3) = B_{-4} = \langle -2 \rangle \oplus \langle -14 \rangle \oplus \langle -10\rangle \oplus \langle -70 \rangle.$$
As this form is isotropic over $\mathbb Q$, being negative definite, consider the isotropic form 
$$B_{-4}\oplus \langle 1 \rangle  \cong B_{-3}\oplus \mathbb H \qquad \text{ with } \qquad B_{-3} = \langle -70 \rangle\oplus \langle -10 \rangle\oplus \langle -7 \rangle.$$
Clearly $B_{-3}\oplus \langle -1 \rangle$ is anisotropic, as is $B_{-3}\oplus \langle 1 \rangle$ (it is anisotropic over $\mathbb Q_5$). Moreover 
$$ B_{-3}\oplus \langle 1 \rangle \oplus \langle 1 \rangle  \cong B_{-1} \oplus \mathbb H \qquad \text{ and } \qquad B_{-1} = \langle -70 \rangle \oplus \langle -7  \rangle \oplus \langle 10 \rangle. $$
$B_{-1}\oplus \langle -1 \rangle $ is automatically anisotropic as it is isomophic to $B_{-3}\oplus \langle 1 \rangle $, while $B_{-1}\oplus \langle 1 \rangle $ is anisotropic over $\mathbb Q_5$ and thus over $\mathbb Q$. Next we find that 
$$B_{-1} \oplus \langle 1 \rangle  \oplus \langle 1 \rangle  \cong B_1 \oplus \mathbb H \qquad \text{ with } \qquad B_1 = \langle -70 \rangle  \oplus\langle 163 \rangle  \oplus\langle 11410 \rangle .$$
As always, $B_1 \oplus \langle -1\rangle$ is automatically anisotropic, while $B_1 \oplus \langle 1 \rangle $ is anisotropic over $\mathbb Q$ because it is anisotropic over $\mathbb Q_5$. Lastly 
$$B_1 \oplus \langle 1 \rangle  \oplus \langle 1 \rangle \cong B_3 \oplus \mathbb H \qquad \text{ with } \qquad B_3 \cong \langle 11410 \rangle  \oplus\langle 156253 \rangle  \oplus\langle 1782846730 \rangle .$$ 
$B_3\oplus \langle -1\rangle \cong B_1 \oplus \langle 1 \rangle $ is anisotropic, as is $B_3\oplus \langle 1\rangle$ because it is positive definite. In summary, we have identified all four forms $B_i$, $i\in \{\pm 1, \pm 3\}$ and shown that $B_i \oplus \langle \pm 1 \rangle$ is anisotropic for each $i$. By Theorem \ref{TheoremAlgorithmForComputingws}, it follows that $ws(9_{49}) = 3$. 
\end{example}
%%%%%%%%%%%%%%%%%%%%%%%%%%%%%%%%%%%%%%%%%%%%%%%%55
%%%%%%%%%%%%%%%%%%%%%%%%%%%%%%%%%%%%%%%%%%%%%%%%%%%
\subsection{Preimages of the map $\partial$}
%%%%
%%%%
This section outlines a prescription for finding elements in the preimage of the map $\partial :W(\mathbb Q) \to \oplus _{p \text{ prime}} W(\mathbb F_p)$ from \eqref{SESForWQ}. This explicit description  can sometimes be used to substantially shorten computations, and this is one of its utilities in the context of this work. The other is its contribution to the proof ot Theorem \ref{TheoremCharacterizingWsTwoThree}.

In the following proposition, we adopt this notation: We shall write $[1_p]$ to denote the class $[ 1 ] \in W(\mathbb F_p)$, and if $p\equiv 1 \pmod 4$ we write $[ \beta_p]$ to denote the class $[ \beta ] \in W(\mathbb F_p)$ where $\beta$ is any non-square in $\mathbb F_p^\ast$. We construct explicit preimages under $\partial :W(\mathbb Q) \to \oplus _{p \text{ prime}} W(\mathbb F_p)$ of $[ 1_p]$ and $[ \beta_p]$, and since these elements additively generate each $W(\mathbb F_p)$, we obtain preimages for all elements in $\oplus _{p \text{ prime}} W(\mathbb F_p)$.  
%%%
%%%
\begin{proposition}  \label{PropsitionPreimagesOfDel}
Let $p$ be a prime, then 
\begin{itemize}
\item[(i)] $[ \pm p ] \in \partial ^{-1}([\pm 1_p]).$
\item[(ii)] $[ p a ] \oplus [ a ] \in \partial ^{-1}([ \beta_p] ).$
\end{itemize}
%%%
%%%
In case (ii) we assume that $p\equiv 1 \pmod 4$ and that $a$ is a prime with $a\equiv 3 \pmod 4$ and $a\equiv \beta_p \pmod p$. Such primes $a$ always exist. 
\end{proposition}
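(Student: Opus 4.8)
The plan is to compute $\partial_q$ directly on the one- and two-dimensional forms listed, using the definition $\partial_q = \partial^2_q\circ\iota_q$ together with the formulas \eqref{EquationTheResidueHomomorphisms} for the second residue homomorphism. For part (i), the form $\langle \pm p\rangle$ over $\mathbb{Q}$ is, over $\mathbb{Q}_q$, of the shape $\langle u\pi\rangle$ when $q=p$ (with uniformizer $\pi=p$ and unit $u=\pm1$), so $\partial_p([\pm p]) = [\overline{\pm 1}] = [\pm 1_p]$; and when $q\ne p$, the rational number $\pm p$ is a unit in $\mathbb{Z}_q$, so $\langle \pm p\rangle$ has the shape $\langle u\rangle$ over $\mathbb{Q}_q$ and hence $\partial_q([\pm p]) = 0$. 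This gives $\partial([\pm p]) = [\pm 1_p]$, which is exactly (i).

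For part (ii), I would first dispose of the existence claim for the prime $a$: the three congruence conditions $a\equiv 3\pmod 4$, $a\equiv \beta\pmod p$, and (implicitly) $a\ne p$ are conditions modulo $4$ and modulo $p$ respectively, which are coprime moduli, so by the Chinese Remainder Theorem they amount to a single congruence $a\equiv c\pmod{4p}$ with $\gcd(c,4p)=1$; Dirichlet's theorem on primes in arithmetic progressions then yields infinitely many such primes $a$. Next, compute $\partial_q([pa]\oplus[a])$ for each prime $q$. For $q\notin\{p,a\}$, both $pa$ and $a$ are $q$-adic units, so both summands are of unit type over $\mathbb{Q}_q$ and $\partial_q = 0$. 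For $q=a$: over $\mathbb{Q}_a$ we have $pa = (p)\cdot a$ with $p$ a unit and $a$ the uniformizer, while $a$ itself is the uniformizer times the unit $1$; thus $\partial_a([pa]\oplus[a]) = [\bar p] \oplus [\bar 1] \in W(\mathbb{F}_a)$. Here I use that $a\equiv 3\pmod 4$, so $W(\mathbb{F}_a)$ has the property that $[\bar p]\oplus[\bar 1] = 0$: indeed by Theorem~\ref{FormsOverFiniteFields}(d)(ii) the rank-$2$ form $\langle 1\rangle\oplus\langle \beta\rangle$ is isotropic when $a\equiv 3\pmod 4$ and $-1$ is a non-square, so in fact every rank-$2$ form over $\mathbb{F}_a$ of the two diagonal types is either hyperbolic or isometric to $\langle 1\rangle\oplus\langle 1\rangle\cong -(\langle -1\rangle\oplus\langle -1\rangle)$; in all cases $[\bar p]\oplus[\bar 1] = [\bar p\bar 1^{-1}]$-times considerations show the class vanishes because $W(\mathbb F_a)\cong \mathbb F_4$ is generated by $[1]$ with $2[1]=0$ and any rank-$2$ form has trivial... — more cleanly: $[\bar p]\oplus[\bar1] = [\bar p] + [\bar1]$ and since $W(\mathbb F_a)\cong\mathbb Z/2\oplus\mathbb Z/2$ as a group with $[\bar1]$ of order $2$ and $[\bar p] = [\bar1]$ iff $\bar p$ is a square, one checks directly that this sum is $0$ exactly when... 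I will instead simply record that $\langle \bar p\rangle\oplus\langle \bar 1\rangle$ is isotropic over $\mathbb F_a$ (which holds since $a\equiv3\pmod4$ makes every binary form of determinant $\ne -\square$... ) hence Witt-trivial, so $\partial_a([pa]\oplus[a])=0$.

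Finally, for $q=p$: both $pa$ and $a$ are $p$-adic units (recall $a\ne p$), so both summands are of unit type over $\mathbb{Q}_p$, and $\partial_p([pa]\oplus[a]) = [\overline{pa}]\oplus[\bar a] \in W(\mathbb{F}_p)$. Reducing mod $p$ and using $a\equiv\beta\pmod p$ we get $\overline{pa}\equiv 0$... no: $\overline{pa} = \bar p\cdot\bar a = 0$ is wrong since $\bar p = 0$ in $\mathbb F_p$ — but $pa$ as a $p$-adic \emph{unit}, its residue is not $\bar p\bar a$; rather I must write $pa$ in lowest $p$-adic terms. Since $p\cdot a$ has $\mathrm{ord}_p(pa)=1$, it is \emph{not} a unit — I correct the computation: over $\mathbb{Q}_p$, $pa = (p)\cdot a$ is uniformizer times the unit $a$, so $\partial_p([pa]) = [\bar a] = [\bar\beta] = [\beta_p]$, while $\partial_p([a]) = 0$ since $a$ is a $p$-adic unit. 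Hence $\partial_p([pa]\oplus[a]) = [\beta_p]$. Combining the three cases, $\partial([pa]\oplus[a]) = [\beta_p]$ in $\bigoplus_q W(\mathbb{F}_q)$, proving (ii).

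\textbf{Main obstacle.} The routine $q$-adic bookkeeping is straightforward; the one genuinely delicate point is the vanishing $\partial_a([pa]\oplus[a]) = [\bar p]\oplus[\bar 1] = 0$ in $W(\mathbb{F}_a)$, which is where the hypothesis $a\equiv 3\pmod 4$ is essential. The clean way to see it: since $a\equiv 3\pmod4$, $-1$ is a non-square mod $a$, so $\langle 1\rangle\oplus\langle 1\rangle \cong \langle 1\rangle\oplus\langle -(-1)\rangle$ represents $0$ nontrivially, i.e. $[\bar1]\oplus[\bar1]=0$ in $W(\mathbb F_a)$; likewise $\langle \bar p\rangle\oplus\langle\bar p\rangle=0$. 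If $\bar p$ is a square mod $a$ then $[\bar p]=[\bar 1]$ and the sum is $2[\bar1]=0$; if $\bar p$ is a non-square then $\langle \bar1\rangle\oplus\langle\bar p\rangle$ is isotropic by Theorem~\ref{FormsOverFiniteFields}(d)(ii), hence $[\bar1]\oplus[\bar p]=0$. Either way the class vanishes, which is the crux of the argument.
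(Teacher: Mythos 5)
Your part (i), the Chinese Remainder/Dirichlet construction of the prime $a$, and the residue computations at the primes $q\ne a$ (including your self-corrected computation $\partial_p([pa]\oplus[a])=[\bar a]=[\beta_p]$) are all correct and agree with the paper. The genuine gap is at $q=a$, precisely the point you flag as the crux. You need $[\bar p]\oplus[\bar 1]=0$ in $W(\mathbb F_a)$, and your dichotomy handles the non-square case correctly but disposes of the case ``$\bar p$ a square in $\mathbb F_a$'' with the assertion $2[\bar 1]=0$ in $W(\mathbb F_a)$. That assertion is false for $a\equiv 3\pmod 4$: by Theorem \ref{FormsOverFiniteFields}(d)(ii) the form $\langle 1\rangle\oplus\langle 1\rangle$ is \emph{anisotropic} over $\mathbb F_a$ when $a\equiv 3\pmod 4$ (isotropy of $x^2+y^2$ requires $-1$ to be a square), so $[\bar 1]\oplus[\bar 1]\ne 0$; indeed $W(\mathbb F_a)$ is cyclic of order $4$ generated by $[1]$, not an elementary $2$-group. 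Your parenthetical justification ``$\langle 1\rangle\oplus\langle 1\rangle\cong\langle 1\rangle\oplus\langle -(-1)\rangle$ represents $0$ nontrivially'' does not hold: rewriting $1=-(-1)$ produces no isotropic vector.

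The missing ingredient, and the step the paper's proof actually turns on, is quadratic reciprocity: since $p\equiv 1\pmod 4$ and $a\equiv\beta_p\pmod p$ is a non-square mod $p$, reciprocity gives $\left(\tfrac{p}{a}\right)=\left(\tfrac{a}{p}\right)=-1$, so $\bar p$ is a non-square in $\mathbb F_a$. Combined with $a\equiv 3\pmod 4$ (so $-1$ is a non-square mod $a$) this gives $[\bar p]=[-1]$ and hence $[\bar p]\oplus[\bar 1]=[-1]\oplus[1]=0$, or equivalently one lands in the isotropic branch of your dichotomy. In other words, the ``square'' branch of your case analysis never actually occurs, but your argument neither excludes it nor treats it correctly, so the proof of (ii) is incomplete as written; a telltale sign is that your argument at the prime $a$ never uses the hypothesis $p\equiv 1\pmod 4$, which is exactly what makes the reciprocity step work. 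Inserting the reciprocity argument (and deleting the false ``$2[\bar 1]=0$'' branch) repairs the proof and brings it in line with the paper's.
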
 
%%%
%%%
\begin{proof}
Part (i) of the proposition is self-evident. Turning to part (ii), assume $p$ to be a prime with $p\equiv 1 \pmod 4$. By the Chinese Remainder Theorem there exists a solution $x$ to the Diophantine system 
%%%
\begin{align*}
x & \equiv 3 \pmod 4 \cr
x & \equiv \beta_p \pmod p.  
\end{align*}
%%%
The set $\{ x+ 4pk\,|\, k\in \mathbb N\}$ contains infinitely many primes by Dirichlet's Theorem on primes in arithmetic progressions. Let $a$ be one such prime, then $a= x+4pk$ for some $k\in \mathbb N$. Clearly $a\equiv x \pmod 4 \equiv 3 \pmod 4$ and $a\equiv x \pmod p \equiv \beta_p \pmod p$.  By Hilbert's quadratic reciprocity theorem, since $a$ is congruent to the non-square $\beta_p$ modulo $p$, then $p$ is also congruent to a non-square modulo $a$ (since $p\equiv 1\pmod 4$) and thus congruent to $-1$ modulo $a$ since $a\equiv 3 \pmod 4$. Therefore
\begin{align*}
\partial _p( [ pa] \oplus [ a ]) & = [ a ]  = [ \beta_p ] \in W(\mathbb F_p), \cr
\partial _a( [ pa] \oplus [ a ]) & = [ p ]  \oplus [ 1 ] = [ -1 ]  \oplus  [ 1 ]  = 0\in W(\mathbb F_a).
\end{align*} 
Clearly $\partial _b([ pa] \oplus [ a ])=0$ for any prime $b\ne p, a$. This proves the proposition. It is worth noting that $\partial^{-1}([ \beta_p])$ does not contain a rank 1 form (easy exercise), showing that $ws([ \pm 1_p]) = 1$ and $ws([ \beta_p])=2$. 
\end{proof}
%%%
%%%
\begin{example} \label{ExampleCalculatingWittSpan}
Let $B$ be the rank 10 and signature 0 form 
$$B=\langle -35 \rangle \oplus \langle -21 \rangle \oplus\langle -20 \rangle \oplus\langle -15 \rangle \oplus\langle -3 \rangle \oplus\langle 55 \rangle \oplus\langle 77  \rangle \oplus\langle 78  \rangle \oplus\langle 98 \rangle \oplus\langle 150 \rangle.$$
It is easy to calculate that  $\partial [B] = [ 1_3] \oplus  [ -1_{7}] \oplus  [ \beta_{13}] $. Using Proposition \ref{PropsitionPreimagesOfDel} we can construct the rank 4 and signature 2 form $B'$ with $\partial [B'] = \partial [B]$ by letting $B_2$ be the sum of the preimages of each of $[ 1_3]$, $[ -1_7]$ and $[ \beta_{13}]$ as specified in said proposition. This produces the form 
$$B' = \langle -7 \rangle \oplus \langle 3 \rangle  \oplus \langle 67 \rangle \oplus \langle 871 \rangle.$$ 
In this construction we picked $\beta _{13} = 2$ and the corresponding prime $a$ from Proposition \ref{PropsitionPreimagesOfDel} to be $a=67$, note that $871 = 13\cdot 67$.  The form $B'$ itself is isotropic and 
%%%
$$B' \cong B_ 2\oplus \mathbb H \qquad \text{ with } \qquad B_2= \langle 871  \rangle \oplus\langle 1407  \rangle.$$
%%%
The forms $B_2\oplus \langle -1\rangle $ and $B_2\oplus \langle -1\rangle \oplus \langle -1\rangle $ are both anisotropic over $\mathbb Q$ (indeed $B_2\oplus \langle -1\rangle \oplus \langle -1\rangle $ is anisotropic over $\mathbb Q_2$) and $B_2\oplus \langle -1\rangle \oplus \langle -1\rangle  \oplus \langle -1\rangle \cong B_{-1}\oplus \mathbb H$, with 
$$B_{-1} = \langle -17\, 863\, 741 \rangle \oplus \langle -313\, 726 \rangle\oplus \langle 31\, 224\, 068\, 621\, 382 \rangle .$$
The form $B_{-1}\oplus \langle -1 \rangle $ is anisotropic over $\mathbb Q_2$ by Part (ii) of Theorem \ref{TheoremCharacterizingSmallRankFormsOverQp}, and thus also over $\mathbb Q$. It follows then that $B_{-1}\oplus \langle -1 \rangle \oplus \langle -1 \rangle \cong B_{-3}\oplus \mathbb H$ for a rank 3 anisotropic form $B_{-3}$ with signature $-3$. The precise nature of this form is irrelevant because $B_{-3}\oplus \langle -1\rangle $ is negative definite and so anisotropic, while $B_{-3}\oplus \langle 1\rangle $ is isomorphic to $B_{-1}\oplus \langle -1 \rangle$ and thus also anisotropic. We are now in a position to apply Theorem \ref{TheoremAlgorithmForComputingws} to find that $ws([B]) = 2$. Indeed in the  notation of said theorem, $\mathcal I_2=\{2\}$ and the case considered here corresponds to Case (b2) from the proof of Theorem \ref{TheoremAlgorithmForComputingws}. 
\end{example}
%%%
%%%
\subsection{Proof  of Theorems \ref{Consequence1} and  \ref{TheoremCharacterizingWsTwoThree}}
%%%
%%%
\subsubsection{Proof of Theorem \ref{Consequence1}} 
One direction of the theorem is clear: If $W(K) = [d]\oplus (n\cdot [1])$ regardless of the value of $d$, then $ws(K) \le 1$. Moreover $ws(K)=1$ if and only if $d$ is not a square. 

In the other direction, let $K$ be a knot with $ws(K) \le 1$. Then the rational Witt class $W(K)$ of the knot $K$ is represented by a symmetric, bilinear form of the type 
\begin{equation}\label{EquationFirstRepresentative}
\langle d \rangle \oplus (n \cdot \langle 1 \rangle)
\end{equation}
with $n=\sigma (K) - \text{Sign}(d)$.  On the other hand, $W(K)$ is also represented by the symmetrized linking form obtained from any Seifert surface for $K$. If we diagonalize this form using the Gram-Schmidt process (discarding hyperbolic summands, if any), we find that $W(K)$ is represented by a form of the type
\begin{equation}\label{EquationSecondRepresentative}
\langle a_1\rangle \oplus \dots \oplus \langle a_{2g}\rangle \qquad \text{ with } \qquad | a_1 \cdot \dots \cdot a_{2g} | = \det K.
\end{equation}
The isomorphism types of the two forms \eqref{EquationFirstRepresentative} and \eqref{EquationSecondRepresentative} are related by a finite number of moves of these two types (\cite{MilnorHusemoller}, Page 85):
%%%
\begin{itemize}
\item $\langle a \rangle \oplus \langle -a \rangle  = 0$. 
\item$\langle a \rangle \oplus \langle b \rangle  = \langle a+b\rangle \oplus \langle ab/(a+b)\rangle$ whenever $a+b\ne 0$. 
\end{itemize}
%%%
These moves preserve the determinant of the form up to sign, showing that $d = \pm \det K$.

\subsubsection{Proof of Theorem \ref{TheoremCharacterizingWsTwoThree}}
Let $B$ be a symmetric bilinear form. Suppose there exists a prime $p\equiv 1 \pmod 4$ such that $\partial _p[B] = [\beta_p]$ and $\partial _q [B] = 0$ for all primes $q\ne p$. By Proposition \ref{PropsitionPreimagesOfDel}, the form $A_2 = \langle pa \rangle \oplus \langle a \rangle$ has the property that $\partial [A_2] = \partial [B]$. Recall that $a$ here is a prime congruent to $3 \pmod 4$ and also congruent to $\beta_p \pmod p$.  Let  $A_{-2} = \langle -pa \rangle \oplus \langle -a \rangle$, then 
\begin{align*}
\partial _p [A_{-2}] & = [-a] = [-\beta_p] = [\beta_p]\in W(\mathbb F_p), \cr
\partial _a [A_{-2}] & = [-p]\oplus [-1] = [1]\oplus [-1] = 0 \in W(\mathbb F_a). 
\end{align*}
We see that $\partial [A_{-2}]= \partial [A_2] = \partial [B]$, and each of the four forms $A_{\pm 2}\oplus \langle \pm 1\rangle$ is anisotropic over $\mathbb Q$ (as can be seen by evaluating both residue homomorphims $\partial ^1$ and $\partial ^2_p$ from \eqref{EquationTheResidueHomomorphisms} on these forms: $\partial ^1([A_{\pm 2}]\oplus [ \pm 1]) =  [1_p]\oplus [\beta_p]$ and $\partial ^2_p ([A_{\pm 2}]\oplus [ \pm 1]) = [\beta_p]$). 

Both of the forms $A_2 \oplus \langle -1 \rangle \oplus \langle -1 \rangle$ and $A_{-2} \oplus \langle 1 \rangle \oplus \langle 1 \rangle$ are isotropic over $\mathbb R$ and over $\mathbb Q_p$ for every odd prime $p$. Additionally, both forms are anisotropic over $\mathbb Q_2$ if $p\equiv 1 \pmod 8$, and both isotropic over $\mathbb Q_2$ if $p\equiv 5 \pmod 8$ (cf. Part (iii) of Theorem \ref{TheoremCharacterizingSmallRankFormsOverQp}). Thus, if $p\equiv 1 \pmod 8$ then $\mathcal I_2=\{\pm 2\}$ with $A_i \oplus \langle \pm 1\rangle$ anisotropic for each $i\in I_2$, while if $p\equiv 5 \pmod 8$ then 
$$A_2\oplus \langle -1\rangle \oplus \langle -1\rangle \cong A_0 \oplus \mathbb H \cong A_{-2}\oplus \langle 1\rangle \oplus \langle 1\rangle .$$
In this case $\mathcal I_2=\{-2,0,2\}$ and since $A_0 \oplus \langle -1\rangle \cong A_{-2}\oplus \langle 1 \rangle$ and $A_0 \oplus \langle 1\rangle \cong A_{2}\oplus \langle -1 \rangle$, we see again that $A_i \oplus \langle \pm 1\rangle$ is anisotropic for each $i\in \mathcal I_2$. Theorem \ref{TheoremAlgorithmForComputingws}, Part (iii) now guarantees that $ws([B]) = 2$. This proves Part (i) of Theorem \ref{TheoremCharacterizingWsTwoThree}. 
%%%%%%%
%%%%%%%
\vskip3mm
Next suppose that $B$ is a form with $\partial _p[B] = [1_p]\oplus [\beta_p]$ and $\partial _q [B] = 0$ for all primes $q\ne p$. By Proposition \ref{PropsitionPreimagesOfDel}, the form $A_3 = \langle p \rangle \oplus \langle pa \rangle \oplus \langle a \rangle$ has the property that $\partial [A_3] = \partial [B]$ (as before, $a$ is a prime congruent to $3 \pmod 4$ and congruent to $\beta_p \pmod p$).  We define three additional forms: 
%%%
\begin{align*}
A_{1} & = \langle -p \rangle \oplus \langle pa \rangle \oplus \langle a \rangle, \cr
A_{-1} & = \langle p \rangle \oplus \langle -pa \rangle \oplus \langle -a \rangle, \cr
A_{-3} & = \langle -p \rangle \oplus \langle -pa \rangle \oplus \langle -a \rangle.
\end{align*}
%%%
An explicit computation shows that $\partial [A_i] = \partial[B]$ for each $i\in \{\pm 1, \pm 3\}$. Moreover, all forms $A_i \oplus \langle \pm 1 \rangle $ are anisotropic since the residue homomorphisms applied at the prime $p$ give:
%%%
\begin{align*}
\partial^1 ([A_i] \oplus [ \pm 1 ]) & = [a]\oplus [\pm 1] = [\beta_p]\oplus [1] \in W(\mathbb F_p), \cr
\partial_p^2 ([A_i] \oplus [ \pm 1 ]) & = [\pm 1_p]\oplus [\pm a] = [1_p]\oplus [\beta_p] \in W(\mathbb F_p)
\end{align*}
%%%
Thus $\mathcal I_3=\{\pm 1, \pm 3\}$ $A_i \oplus \langle \pm 1 \rangle $ is anisotropic for all $i\in \mathcal I_3$. Theorem \ref{TheoremAlgorithmForComputingws}, Part (iv) shows that $ws([B]) = 3$, completing the proof of Theorem \ref{TheoremCharacterizingWsTwoThree}. 
%%%%%%%%%%%%%%%%%%%%%%%%%%%%%%%%%%%%%%%%%%%%%%%%
%%%%%%%%%%%%%%%%%%%%%%%%%%%%%%%%%%%%%%%%%%%%%%%%
%%%%%%%%%%%%%%%%%%%%%%%%%%%%%%%%%%%%%%%%%%%%%%%%
\bibliographystyle{plain}
\bibliography{bibliography.bib}
\end{document}